\newtheorem{theorem}{Theorem}
\newtheorem{proposition}{Proposition}
\newtheorem{lemma}{Lemma}
\newtheorem{definition}{Definition}
\newtheorem{remark}{Remark}
\newtheorem{corollary}{Corollary}
\newcommand{\EE} {\mathbb E}
\newcommand{\HH} {\mathbb H}
\newcommand{\PP} {\mathbb P}
\newcommand{\RR}{\mathbb R}
\newcommand{\bA} {\mathbb A}
\newcommand{\cP}{\mathcal P}
\def\balpha{\bar \alpha}
\def\bX{\bar X}
\begin{document}

\title{Probabilistic Analysis of Mean-Field Games}

\author{Ren\'e Carmona}
\address{ORFE, Bendheim Center for Finance, Princeton University,
Princeton, NJ  08544, USA.}
\email{rcarmona@princeton.edu}
\thanks{Partially supported  by NSF: DMS-0806591}

\author{Francois Delarue}
\address{Laboratoire Jean-Alexandre Dieudonn\'e 
UniversitŽ de Nice Sophia-Antipolis 
Parc Valrose 
06108 Cedex 02, Nice, FRANCE}
\email{Francois.Delarue@unice.fr}

\subjclass[2000]{Primary }

\keywords{}

\date{June 10, 2011}

\begin{abstract}
The purpose of this paper is to provide a complete probabilistic analysis of a large class of stochastic differential games 
for which the interaction  between the players is of mean-field type. 
We implement the Mean-Field Games strategy developed analytically by Lasry and Lions in a purely probabilistic
framework, relying on tailor-made forms of the stochastic maximum principle. While we assume that the state dynamics are affine
in the states and the controls, our assumptions on the nature of the costs are rather weak, and surprisingly,
the dependence of all the coefficients upon the statistical distribution of the states remains of a rather general nature.  
Our probabilistic approach calls for the solution of systems of forward-backward stochastic differential equations of a McKean-Vlasov type for which no existence result is known, and for which we prove existence and regularity of the corresponding value function. Finally, we prove that solutions of the mean-field game as formulated by Lasry and Lions 
do indeed provide approximate Nash equilibriums for games with a large number of players, and we quantify the nature of the approximation. 
\end{abstract}

\maketitle

\section{\textbf{Introduction}}
In a trailblazing contribution, Lasry and Lions \cite{MFG1,MFG2,MFG3} proposed a methodology to produce approximate Nash equilibriums for stochastic differential games  with symmetric interactions and a large number of players. In their model, the costs to a given player \emph{feel} the presence and the behavior of the other players through the empirical distribution of their private states. This type of interaction was introduced and
studied in statistical physics under the name of \emph{mean-field} interaction, allowing for the derivation of effective equations in the limit of asymptotically large systems.   Using intuition and mathematical results from propagation of chaos, Lasry and Lions propose to assign to each player, independently of what other players may do, a distributed closed loop strategy given by the solution of the limiting problem, arguing that such a resulting game should be in an \emph{approximate} Nash equilibrium. This streamlined approach is very attractive as large stochastic differential games are notoriously nontractable. They formulated the limiting problem as a system of two highly coupled nonlinear partial differential equations (PDE for short): the first one, of the Hamilton-Jacobi-Bellman type, takes care of the optimization part, while the second one, of Kolmogorov type, guarantees the time consistency of the statistical distributions of the private states of the individual players. The issue of existence and uniqueness of solutions for such a system is a very delicate problem, as the solution of the former equation should propagate backward in time from a terminal condition while the solution of the latter should evolve forward in time from an initial condition. More than the nonlinearities,  the conflicting directions of time compound the difficulties.

In a subsequent series of works \cite{GiraudGueantLasryLions,FDD09,GueantLasryLions.pplnm,Lachapelle,LachapelleLasry} with PhD students and postdoctoral fellows, Lasry and Lions considered applications to domains as diverse as the management of exhaustible resources like oil, house insulation, and the analysis of pedestrian crowds. Motivated by problems in large communication networks, Caines, Huang and Malham\'e introduced, essentially at the same time \cite{HuangCainesMalhame2}, a similar strategy which they call the Nash Certainty Equivalence. They also studied practical applications to large populations behavior \cite{HuangCainesMalhame1}.

\vskip 2pt
The goal of the present paper is to study the effective Mean-Field Game equations proposed by Lasry and Lions, from a probabilistic point of view. To this end, we recast the challenge as a fixed point problem in a space of flows of probability measures, show that these fixed points do exist and provide approximate Nash equilibriums for large games, and quantify the accuracy of the approximation. 

We tackle the limiting stochastic optimization problems
using the probabilistic approach of the stochastic maximum principle, thus reducing the problems to the solutions of Forward Backward 
Stochastic Differential Equations (FBSDEs for short). The search for a fixed flow of probability measures turns the system of forward-backward stochastic differential equations into equations of the McKean-Vlasov type where the distribution of the solution appears
in the coefficients. In this way, both the optimization and interaction components of the problem are captured by a single FBSDE, avoiding the twofold reference to Hamilton-Jacobi-Bellman equations on the one hand, and Kolmogorov equations on the other hand. As a by-product of this approach, the stochastic dynamics of the states could be degenerate. We give a general overview of this strategy in Section \ref{se:notation} below. Motivated in part by the works of Lasry, Lions and collaborators, Backward Stochastic Differential Equations (BSDEs) of the mean field type have recently been studied. See for example \cite{BuckdahnDjehicheLi1,BuckdahnDjehicheLiPeng}. However, existence and uniqueness results for BSDEs are much easier to come by than for FBSDEs, and here, we have to develop existence results from scratch. 

Our first existence result is proven for bounded coefficients by means of a fixed point argument based on Schauder's theorem pretty much in the same spirit as Cardaliaguet's notes \cite{Cardaliaguet}. Unfortunately, such a result does not apply to some of the linear-quadratic (LQ) games already studied  \cite{HuangCainesMalhame4,Bardi,Bensoussanetal,CarmonaDelarueLachapelle}, and some of the most technical proofs of the papers are devoted to the extension of this existence result to coefficients with linear growth. See Section \ref{sec:3}. 
Our approximation and convergence arguments are based on probabilistic \emph{a priori} estimates obtained from tailor-made versions of the stochastic maximum principle which we derive in Section \ref{se:notation}. The reader is referred to the book of Ma and Yong \cite{MaYong.book} for background material on adjoint equations, FBSDEs and the stochastic maximum principle approach to stochastic optimization problems. As we rely on this approach, we find it natural to derive the compactness properties needed in our proofs from convexity properties of the coefficients of the game. The reader is also referred to the papers by Hu and Peng \cite{HuPeng} and Peng and Wu \cite{PengWu} for general solvability properties of standard FBSDEs within the same framework of stochastic optimization.

The thrust of our analysis is not limited to existence
of a solution to a rather general class of McKean-Vlasov FBSDEs, but also to the extension to this non-Markovian set-up of the construction of the
FBSDE value function expressing the solution of the backward equation in terms of the solution of the forward dynamics. The existence of this value function is crucial for the formulation and the proofs of the results of the last part of the paper. In Section \ref{se:apNash}, we indeed prove that the solutions of the \emph{fixed point FBSDE} (which include a function $\hat\alpha$ minimizing the Hamiltonian of the system, three stochastic processes $(X_t,Y_t,Z_t)_{0\le t\le T}$ solving the FBSDE, and the FBSDE value function $u$)  provide a set of distributed strategies which, when used by the players of a $N$-player game, form an $\epsilon_N$-approximate Nash equilibrium, and we quantify the speed at which $\epsilon_N$ tends to $0$ when $N\to+\infty$. This type of argument has been used for simpler models in \cite{Bensoussanetal} or \cite{Cardaliaguet}. Here, we use convergence estimates which are part of the standard theory of propagation of chaos (see for example \cite{Sznitman,JourdainMeleardWoyczynski}) and the Lipschitz continuity and linear growth the FBSDE value function $u$ which we prove earlier in the paper.

\section{\textbf{General Notation and Assumptions}}
\label{se:notation}
Here, we introduce the notation and the basic tools from stochastic analysis which we use throughout the paper.

\subsection{The $N$ Player Game}
We consider a stochastic differential game with $N$ players, each player $i\in\{1,\cdots,N\}$ controlling his own private state $U^i_t\in\RR^d$ at time $t\in[0,T]$ 
by taking an action $\beta^i_t$ in a set $A\subset\RR^k$.
We assume that the dynamics of the private states of the individual players are given by It\^o's stochastic differential equations of the form
\begin{equation}
\label{fo:private_dynamics}
dU^i_t=b^i(t,U^i_t,{\bar{\nu}}_t^N,\beta^i_t)dt+\sigma^i(t,U^i_t,{\bar{\nu}}^N_t,\beta^i_t)dW^i_t, \qquad 0\le t\le T, 
\quad i=1,\cdots,N,\\
\end{equation}
where the $W^i=(W^i_t)_{0\le t\le T}$ are $m$-dimensional independent Wiener processes, 
$
(b^i,\sigma^i):[0,T]\times \RR^d\times \cP(\RR^d)\times A\hookrightarrow \RR^{d} \times \RR^{d \times m}
$
are deterministic measurable functions satisfying a set of assumptions spelled out below, and $\bar{\nu}_t^N$ denotes the empirical distribution of $U_t=(U^1_t,\cdots,U^N_t)$ defined as
$$
{\bar{\nu}}_t^N(dx')=\frac1N\sum_{i=1}^N\delta_{U_t^i}(dx').
$$
Here and in the following, we use the notation $\delta_x$ for the Dirac measure (unit point mass) at $x$, and $\cP(E)$ for the space of probability measures on $E$
whenever $E$ is a topological space equipped with its Borel $\sigma$-field. In this framework, ${\mathcal P}(E)$ itself is endowed with the 
Borel $\sigma$-field generated by the topology of weak convergence of measures. 
 \vskip 2pt
 Each player chooses a strategy in the space $\bA=\HH^{2,k}$ of progressively measurable $A$-valued stochastic processes 
 $\beta=(\beta_t)_{0\le t\le T}$ satisfying the admissibility condition:
\begin{equation}
\label{eq:4:4:1}
\EE \biggl[ \int_{0}^T \vert \beta_{t} \vert^2 dt \biggr] < + \infty.
\end{equation}
The choice of a strategy is driven by the desire to minimize an expected cost over the period $[0,T]$, each individual cost being a combination of running and terminal costs. For each $i\in\{1,\cdots,N\}$, the running cost to player $i$ is given by a measurable function $f^i:[0,T]\times\RR^d\times \cP(\RR^d)\times A\hookrightarrow \RR$ and the terminal cost by a measurable function $g^i:\RR^d\times\cP(\RR^d) \hookrightarrow \RR$ in such a way that if the $N$ players use the strategy $\beta=(\beta^1,\cdots,\beta^N)\in\bA^N$, the expected total cost to player $i$ is 
\begin{equation}
\label{fo:cost}
J^i(\beta)=\EE\biggl[g^i(U^i_T,{\bar{\nu}}^N_T) + \int_0^Tf^i\bigl(t,U^i_t,{\bar{\nu}}_t^N,\beta^i_t\bigr)dt \biggr].
\end{equation}
Here $\bA^N$ denotes the product of $N$ copies of $\bA$.  Later in the paper, we let $N\to \infty$ and use the notation $J^{N,i}$ in order to emphasize the dependence upon $N$. Notice that even though only $\beta^i_t$ appears in the formula giving the cost to player $i$, this cost depends upon the strategies used by the other players indirectly, as these strategies affect not only the private state $U_t^i$, but also the empirical distribution $\bar{\nu}^N_t$ of all the private states. As explained in the introduction, our model requires that the behaviors of the players be \textit{statistically identical}, imposing that the coefficients $b^i$, $\sigma^i$, $f^i$ and $g^i$ do not depend upon $i$. We denote them by $b$, $\sigma$, $f$ and $g$.

\vskip 2pt
In solving the game, we are interested in the notion of optimality given by the concept of Nash equilibrium.
Recall that a set of admissible strategies $\alpha^*=(\alpha^{*1},\cdots,\alpha^{*N})\in\bA^N$ is said to be a Nash equilibrium for the game if 
$$
\forall i\in\{1,\cdots,N\},\forall \alpha^i\in\bA,\qquad J^i(\alpha^*)\le J^i(\alpha^{*-i},\alpha^i).
$$
where we use the standard notation $(\alpha^{*-i},\alpha^i)$ for the set of strategies $(\alpha^{*1},\cdots,\alpha^{*N})$ where $\alpha^{*i}$ has been replaced by $\alpha^i$.

\subsection{The Mean-Field Problem}
\label{subsec:2:2}
In the case of large symmetric games, some form of averaging is expected when the number of players tends to infinity. The Mean-Field Game (MFG) philosophy of Lasry and Lions is to search for approximate Nash equilibriums  through the solution of effective equations appearing in the limiting regime $N\to\infty$, and assigning to each player the strategy $\alpha$ provided by the solution of the effective system of equations they derive. In the present context, the implementation of this idea involves the solution of the following fixed point problem which we break down in three steps for pedagogical reasons:
\begin{enumerate}\itemsep=-2pt
\item[(i)] Fix a deterministic function $[0,T]\ni t\hookrightarrow \mu_t\in\cP(\RR^d)$;
\item[(ii)] Solve the standard stochastic control problem
\begin{equation}
\label{fo:mfgcontrolpb}
\begin{split}
&\inf_{\alpha\in\bA}\EE\left[\int_0^Tf(t,X_t,\mu_t,\alpha_t)dt + g(X_T,\mu_T)\right]
\\
&\mbox{subject to} \quad dX_t=b(t,X_t,\mu_t,\alpha_t)dt + \sigma(t,X_t,\mu_t,\alpha_t) dW_t; \quad X_{0}=x_{0}.
\end{split}
\end{equation}
\item[(iii)] Determine the function $[0,T]\ni t\hookrightarrow \mu_t\in\cP(\RR^d)$  so that
$
\forall t\in[0,T],\  \PP_{X_t}=\mu_t.$
\end{enumerate}
Once these three steps have been taken successfully, if the \textit{fixed-point} optimal control $\alpha$ identified in step (ii) is in feedback form,  i.e. of the form $\alpha_t=\hat\alpha(t,X_t,\PP_{X_t})$
for some function $\hat\alpha$ on $[0,T]\times \RR^d\times\cP(\RR^d)$, denoting by $\hat\mu_t=\PP_{X_t}$ the \textit{fixed-point} marginal distributions, the prescription 
$\hat{\alpha}^{i*}_t=\hat\alpha(t,X^i_t,\hat\mu_t)$, if used by the players $i=1,\cdots,N$ of a large game, should form an approximate Nash equilibrium.
We prove this fact rigorously in Section \ref{se:apNash} below, and we quantify the accuracy of the approximation.

\subsection{The Hamiltonian} 
\label{subsec:2:3}
For the sake of simplicity, we assume that $A=\RR^k$, and in order to lighten the notation and to avoid many technicalities, that the volatility is an uncontrolled constant matrix $\sigma  \in \RR^{d \times m}$.
The fact that the volatility is uncontrolled allows us to use the simplified version for the Hamiltonian:
\begin{equation}
\label{fo:Hamiltonian}
H(t,x,\mu,y,\alpha) = \langle b(t,x,\mu,\alpha),y \rangle + f(t,x,\mu,\alpha),
\end{equation}
for $t \in [0,T]$, $x,y \in \RR^d$, $\alpha\in\RR^k$, and $\mu \in {\mathcal P}(\RR^d)$. Our first task will be to minimize the Hamiltonian with respect to the control parameter, and understand how minimizers depend upon the other variables. 
We shall use the following standing assumptions.
\vskip 2pt\noindent
\textbf{(A.1)} The drift $b$ is an affine function of $\alpha$ in the sense that it is of the form
\begin{equation}
\label{fo:drift1}
b(t,x,\mu,\alpha)=b_1(t,x,\mu) +b_2(t)\alpha,
\end{equation}
where the mapping $[0,T] \ni t \hookrightarrow b_2(t) \in \RR^{d \times k}$ is measurable and bounded,
and the mapping $[0,T] \ni (t,x,\mu) \hookrightarrow b_{1}(t,x,\mu) \in \RR^d$ is measurable and bounded on bounded subsets of $[0,T] \times \RR^d \times {\mathcal P}_{2}(\RR^d)$. 

\vskip 2pt
Here and in the following, whenever $E$ is a separable Banach space and $p$ is an integer greater than $1$, $\cP_p(E)$ stands for the subspace of $\cP(E)$ of probability measures of order $p$, i.e. having a finite moment of order $p$ so that $\mu \in \cP_p(E)$ if 
$\mu \in \cP(E)$ and 
\begin{equation}
\label{eq:20:6:1}
M_{p,E}(\mu) = \biggl(\int_E  \|x\|_{E}^p d\mu(x) \biggr)^{1/p} < + \infty.
\end{equation}
We write $M_{p}$ for $M_{p,\RR^d}$.
Below, bounded subsets of ${\mathcal P}_{p}(E)$ are defined as sets of probability measures with uniformly bounded moments of order p. 
\vskip 2pt\noindent
\textbf{(A.2)}  There exist two positive constants $\lambda$ and $c_{L}$ such that for any $t \in [0,T]$ and $\mu \in {\mathcal P}_{2}(\RR^d)$, the function 
$\RR^d \times \RR^k \ni (x,\alpha)\hookrightarrow f(t,x,\mu,\alpha)\in\RR$ is once continuously differentiable with Lipschitz-continuous derivatives (so that $f(t,\cdot,\mu,\cdot)$ is $C^{1,1}$), the Lipschitz constant in $x$ and $\alpha$ being bounded by 
$c_L$ (so that it is uniform in $t$ and $\mu$). Moreover, 
it satisfies the convexity assumption 
\begin{equation}
\label{fo:lambdaconvexity}
f(t,x',\mu,\alpha') - f(t,x,\mu,\alpha) - \langle (x'-x,\alpha'-\alpha),\partial_{(x,\alpha)} f(t,x,\mu,\alpha) \rangle \geq \lambda 
\vert \alpha' - \alpha \vert^2. 
\end{equation} 
The notation $\partial_{(x,\alpha)} f$ stands for the gradient in the joint variables $(x,\alpha)$. Finally, $f$, $\partial_{x} f$ and $\partial_{\alpha} f$ are locally bouded over $[0,T] \times \RR^d \times 
{\mathcal P}_{2}(\RR^d) \times \RR^k$. 
\vskip 4pt
The minimization of the Hamiltonian is taken care of by the following result.
\begin{lemma}
\label{le:minimizer}
If we assume that assumptions (A.1--2) are in force, then, for all $(t,x,\mu,y)
\in [0,T] \times \RR^d \times {\mathcal P}_{2}(\RR^d) \times \RR^k$, 
there exists a unique minimizer $\hat{\alpha}(t,x,\mu,y)$ of $H$.
Moreover, the function $[0,T] \times \RR^d \times {\mathcal P}_{2}(\RR^d) \times \RR^d
\ni (t,x,\mu,y)   \hookrightarrow \hat{\alpha}(t,x,\mu,y)$ is measurable, locally bounded and Lipschitz-continuous with respect to 
$(x,y)$, uniformly in $(t,\mu) \in [0,T] \times {\mathcal P}_{2}(\RR^d)$, the Lipschitz constant depending only upon $\lambda$, the supremum norm 
of $b_{2}$ and the Lipschitz 
constant of $\partial_{\alpha}f$ in $x$.
\end{lemma}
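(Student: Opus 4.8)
The plan is to exploit the affine structure of the drift from \textbf{(A.1)} together with the strong convexity of $f$ in the control variable contained in \textbf{(A.2)}. Writing out the Hamiltonian \eqref{fo:Hamiltonian} with $b(t,x,\mu,\alpha)=b_1(t,x,\mu)+b_2(t)\alpha$, one sees that
\begin{equation*}
H(t,x,\mu,y,\alpha)=\langle b_1(t,x,\mu),y\rangle + \langle \alpha, b_2(t)^\top y\rangle + f(t,x,\mu,\alpha),
\end{equation*}
so the term $\langle b_1(t,x,\mu),y\rangle$ plays no role in the minimization and we are left to minimize $\alpha \mapsto \langle \alpha, b_2(t)^\top y\rangle + f(t,x,\mu,\alpha)$. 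Since the added term is linear in $\alpha$, the convexity inequality \eqref{fo:lambdaconvexity} taken with $x'=x$ shows that this map is $\lambda$-strongly convex. Strong convexity forces coercivity in $\alpha$, and together with the continuity supplied by the $C^{1,1}$ regularity of \textbf{(A.2)} this guarantees existence of a minimizer, while strict convexity gives uniqueness. This settles the first assertion and shows that $\hat\alpha(t,x,\mu,y)$ is characterized by the first-order condition
\begin{equation*}
b_2(t)^\top y + \partial_\alpha f\bigl(t,x,\mu,\hat\alpha(t,x,\mu,y)\bigr)=0 .
\end{equation*}

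Next I would record the strong monotonicity of the partial gradient: symmetrizing \eqref{fo:lambdaconvexity} at fixed $(t,x,\mu)$ (writing it with the two control arguments interchanged and adding) yields
\begin{equation*}
\langle \partial_\alpha f(t,x,\mu,\alpha')-\partial_\alpha f(t,x,\mu,\alpha),\,\alpha'-\alpha\rangle \geq 2\lambda\,|\alpha'-\alpha|^2 .
\end{equation*}
Local boundedness then follows from this inequality applied with $\alpha'=\hat\alpha(t,x,\mu,y)$ and $\alpha=0$, combined with the first-order condition: I obtain
\begin{equation*}
2\lambda\,|\hat\alpha(t,x,\mu,y)|^2 \leq |\hat\alpha(t,x,\mu,y)|\,\bigl(\|b_2\|_\infty\,|y| + |\partial_\alpha f(t,x,\mu,0)|\bigr),
\end{equation*}
and since $\partial_\alpha f(t,\cdot,\cdot,0)$ is locally bounded by \textbf{(A.2)}, this bounds $\hat\alpha$ on bounded sets.

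The Lipschitz estimate is the heart of the matter, and it is again driven by strong monotonicity. Fixing $(t,\mu)$, take two points $(x,y)$ and $(x',y')$ with minimizers $\hat\alpha$ and $\hat\alpha'$. To handle the dependence on $y$ I keep $x$ fixed, subtract the two first-order conditions, pair with $\hat\alpha-\hat\alpha'$, and bound the left-hand side below by strong monotonicity, which gives $2\lambda|\hat\alpha-\hat\alpha'|\leq \|b_2\|_\infty\,|y-y'|$. To handle the dependence on $x$ I keep $y$ fixed and rewrite the equality of the two gradients as
\begin{equation*}
\partial_\alpha f(t,x,\mu,\hat\alpha)-\partial_\alpha f(t,x,\mu,\hat\alpha') = \partial_\alpha f(t,x',\mu,\hat\alpha')-\partial_\alpha f(t,x,\mu,\hat\alpha');
\end{equation*}
pairing with $\hat\alpha-\hat\alpha'$, bounding the right-hand side by $L_x|x-x'|$ where $L_x\le c_L$ is the Lipschitz constant of $\partial_\alpha f$ in $x$, and again using strong monotonicity on the left, I get $2\lambda|\hat\alpha-\hat\alpha'|\leq L_x|x-x'|$. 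Adding the two contributions produces the claimed Lipschitz continuity in $(x,y)$, uniform in $(t,\mu)$, with constant controlled by $\lambda$, $\|b_2\|_\infty$ and $L_x$.

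Finally, measurability of the map $(t,x,\mu,y)\mapsto\hat\alpha(t,x,\mu,y)$ follows because $\hat\alpha$ is the unique zero of $(\alpha;t,x,\mu,y)\mapsto b_2(t)^\top y+\partial_\alpha f(t,x,\mu,\alpha)$, which is jointly measurable in $(t,x,\mu,y)$ and continuous in $\alpha$; uniqueness of the zero permits a measurable selection argument, or equivalently one writes $\hat\alpha$ as a pointwise limit of approximations that are continuous in $(x,y)$ and measurable in $(t,\mu)$. I expect the only genuinely delicate point to be the measurability in the measure argument $\mu$, since $\mathcal{P}_2(\RR^d)$ must be handled with its weak-convergence Borel structure; the existence, uniqueness, local boundedness, and the Lipschitz estimates are all clean consequences of the $\lambda$-convexity of $f$ and the affine form of the drift.
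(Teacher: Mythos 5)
Your proof is correct, and on the key point it takes a genuinely different route from the paper. The paper establishes uniqueness and the first-order condition from strict convexity exactly as you do, and derives the local bound \eqref{eq:3:5:1} from the convexity inequality applied to $H$ at $0$ and at $\hat\alpha$ (equivalent to your monotonicity computation, up to a factor of $2$ in the constant); but for the Lipschitz continuity in $(x,y)$ it simply invokes the implicit function theorem applied to $\partial_\alpha H(t,x,\mu,y,\hat\alpha)=0$. Your argument instead symmetrizes \eqref{fo:lambdaconvexity} to get the strong monotonicity bound $\langle \partial_\alpha f(\alpha')-\partial_\alpha f(\alpha),\alpha'-\alpha\rangle\geq 2\lambda|\alpha'-\alpha|^2$ and pairs the difference of the two first-order conditions against $\hat\alpha-\hat\alpha'$. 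This buys two things: it works directly under the $C^{1,1}$ hypothesis of (A.2) (the classical implicit function theorem would want $\partial_\alpha f$ to be $C^1$, not merely Lipschitz, so the paper is implicitly using a nonsmooth variant), and it produces the Lipschitz constant with exactly the dependence announced in the statement ($\lambda$, $\|b_2\|_\infty$, and the Lipschitz constant of $\partial_\alpha f$ in $x$). You also make the existence step explicit via coercivity, which the paper leaves implicit, while the paper settles measurability by exhibiting $\hat\alpha$ as a limit of gradient-descent iterates rather than by a measurable selection theorem; both devices are standard and equally valid here.
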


\begin{proof}
For any given $(t,x,\mu,y)$, the function $\RR^k \ni \alpha \hookrightarrow H(t,x,\mu,y,\alpha)$ is once continuously differentiable and strictly convex so that $\hat{\alpha}(t,x,\mu,y)$ appears as the unique solution of the equation
$\partial_{\alpha} H(t,x,\mu,y,\hat{\alpha}(t,x,\mu,y)) = 0.$
By strict convexity, measurability of the minimizer $\hat{\alpha}(t,x,\mu,y)$ is a consequence of the gradient descent algorithm. Local boundedness of $\hat{\alpha}(t,x,\mu,y)$ also follows from strict convexity since by \eqref{fo:lambdaconvexity},
\begin{equation*}
\begin{split}
H(t,x,\mu,y,0) &\geq  H(t,x,\mu,y,\hat{\alpha}(t,x,\mu,y)\bigr)\\
&\geq H(t,x,\mu,y,0) + \langle \hat{\alpha}(t,x,\mu,y) ,\partial_{\alpha} H(t,x,\mu,y,0) \rangle
 + \lambda \bigl\vert  \hat{\alpha}(t,x,\mu,y) \bigr\vert^2, 
\end{split}
\end{equation*}
so that 
\begin{equation}
\label{eq:3:5:1}
\bigl\vert \hat{\alpha}(t,x,\mu,y) \bigr\vert \leq \lambda^{-1} \bigl( \vert \partial_{\alpha}
f(t,x,\mu,0) \vert + \vert b_{2}(t) \vert \, \vert y \vert \bigr).
\end{equation} 
Inequality \eqref{eq:3:5:1} will be used repeatedly. Moreover,  by the implicit function theorem, $\hat\alpha$ is 
Lipschitz-continuous with respect to $(x,y)$, the Lipschitz-constant being controlled by 
the uniform bound on $b_{2}$ and by the Lipschitz-constant of $\partial_{(x,\alpha)} f$. 
\end{proof}

\subsection{Stochastic Maximum Principle} 
Going back to the program (i)--(iii) outlined in Subsection \ref{subsec:2:2}, the first two steps therein consist in 
solving a standard minimization problem when the distributions $(\mu_{t})_{0 \leq t \leq T}$ are \textit{frozen}, and one could express the value function of the optimization problem \eqref{fo:mfgcontrolpb} as the solution of the corresponding Hamilton-Jacobi-Bellman (HJB for short) equation. This is the keystone of the analytic approach to the MFG theory, the matching problem 
(iii) being resolved by coupling the HJB equation with a Kolmogorov equation intended to identify the  $(\mu_{t})_{0 \leq t \leq T}$ with the marginal distributions of the optimal state of the problem. 

Instead, the strategy we have in mind relies on a probabilistic description of the optimal states of the optimization problem \eqref{fo:mfgcontrolpb} as provided by the so-called stochastic maximum principle. Indeed, the latter provides a necessary condition for the optimal states of the problem \eqref{fo:mfgcontrolpb}: under suitable conditions, the optimally controlled diffusion processes satisfy the forward dynamics in a characteristic FBSDE, referred to as \emph{the adjoint system} of the stochastic optimization problem. Moreover, the stochastic maximum principle provides a sufficient condition since, under additional convexity conditions, the forward dynamics of any solution to the adjoint system are optimal. In what follows, we use the sufficiency condition for proving the existence of solutions to the limit problem (i)--(iii) stated  in Subsection \ref{subsec:2:2}.  In addition to (A.1--2) we will also assume:
\vskip 2pt\noindent
\textbf{(A.3)} The function $[0,T] \ni t \hookrightarrow b_{1}(t,x,\mu)$ is affine in $x$, i.e. it has the form
$[0,T] \ni t \hookrightarrow b_{0}(t,\mu) + b_{1}(t)x$, where $b_{0}$ and $b_{1}$ are $\RR^d$ and 
$\RR^{d \times d}$ valued respectively, and are bounded on bounded subsets of their respective domains. In particular, $b$ reads
\begin{equation}
\label{fo:drift}
b(t,x,\mu,\alpha) = b_{0}(t,\mu) + b_{1}(t) x + b_{2}(t) \alpha.
\end{equation}
\vskip 2pt
\noindent
\textbf{(A.4)} The function $\RR^d \times {\mathcal P}_{2}(\RR^d) \ni (x,\mu) 
\hookrightarrow g(x,\mu)$ is locally bounded. Moreover, for any $\mu \in {\mathcal P}_{2}(\RR^d)$, the function 
$\RR^d \ni x \hookrightarrow g(x,\mu)$ is once continuously differentiable and convex and has a $c_{L}$-Lipschitz-continuous first order derivative. 
\vskip 4pt
In order to make the paper self-contained, we state and briefly prove the form of the sufficiency part of the stochastic maximum principle 
as it applies to (ii) when the flow of measures $(\mu_{t})_{0 \leq t \leq T}$ are frozen. Instead of the standard version given for example
in Chapter IV  of the textbook by Yong and Zhou \cite{YongZhou}, we shall use:

\begin{theorem}
\label{thm:SMP}
Under assumptions (A.1--4),  
if the mapping $[0,T] \ni t \hookrightarrow \mu_{t}\in\cP_2(\RR^d)$ is measurable and bounded, and the cost functional $J$ is defined by
\begin{equation}
\label{eq:5:5:2}
J\bigl(\beta;\mu\bigr)
= {\mathbb E}
\biggl[ g(U_{T},\mu_{T}) + \int_{0}^T f(t,U_{t},\mu_{t},\beta_{t}) dt \biggr],
\end{equation}
for any progressively measurable process $\beta=(\beta_{t})_{0 \leq t \leq T}$ satisfying the admissibility condition \eqref{eq:4:4:1} where $ U=(U_{t})_{0 \leq t \leq T}$ is the corresponding controlled diffusion process 
$$
U_{t} = x_{0} + \int_{0}^t b(s,U_{s},\mu_{s},\beta_{s}) ds + \sigma W_{t}, \quad t \in [0,T],
$$
for $x_{0} \in \RR^d$, and if the forward-backward system
\begin{equation}
\label{eq:4:4:6}
\begin{split}
&dX_{t} = b\bigl(t,X_{t},\mu_{t},\hat{\alpha}(t,X_{t},\mu_{t},Y_{t})\bigr) dt + \sigma dW_{t},\qquad  X_{0}=x_{0}
\\
&dY_{t} = - \partial_{x} H(t,X_{t},\mu_{t},Y_{t},\hat{\alpha}(t,X_{t},\mu_{t},Y_{t}) \bigr) + Z_{t} dW_{t},\qquad Y_{T} = \partial_{x} g(X_{T},\mu_{T})
\end{split}
\end{equation}
has a solution $(X_{t},Y_{t},Z_{t})_{0 \leq t \leq T}$ such that 
\begin{equation}
\label{eq:4:4:3}
{\mathbb E} \biggl[ \sup_{0 \leq t \leq T} \bigl( \vert X_{t} \vert^2 + \vert Y_{t} \vert^2 \bigr) + \int_{0}^T \vert Z_{t} \vert^2 
dt \biggr] < + \infty,
\end{equation}
if we set $\hat{\alpha}_{t} = \hat{\alpha}(t,X_{t},\mu_{t},Y_{t})$, then for any $\beta=(\beta_{t})_{0 \leq t \leq T}$ satisfying \eqref{eq:4:4:1}, it holds
$$
J\bigl(\hat{\alpha};\mu\bigr) + \lambda \EE \int_{0}^T \vert \beta_{t} - \hat{\alpha}_{t} \vert^2 dt
\leq J\bigl(\beta;\mu\bigr).
$$
\end{theorem}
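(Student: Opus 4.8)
The plan is to prove the sufficiency part of the stochastic maximum principle by a direct comparison argument, estimating the cost gap $J(\beta;\mu) - J(\hat\alpha;\mu)$ from below. First I would introduce the controlled process $U = (U_t)_{0\le t\le T}$ associated to an arbitrary admissible $\beta$, and write the difference of costs as

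\begin{equation*}
J(\beta;\mu) - J(\hat\alpha;\mu) = \EE\bigl[ g(U_T,\mu_T) - g(X_T,\mu_T) \bigr] + \EE\int_0^T \bigl[ f(t,U_t,\mu_t,\beta_t) - f(t,X_t,\mu_t,\hat\alpha_t) \bigr] dt.
\end{equation*}

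The strategy is to lower-bound each of the two terms by exploiting convexity. For the terminal term, convexity of $g$ in $x$ (assumption (A.4)) gives $g(U_T,\mu_T) - g(X_T,\mu_T) \ge \langle U_T - X_T, \partial_x g(X_T,\mu_T)\rangle = \langle U_T - X_T, Y_T\rangle$, using the terminal condition of the backward equation. For the running term, I would use the Hamiltonian to rewrite $f = H - \langle b, y\rangle$, so that $f(t,U_t,\mu_t,\beta_t) - f(t,X_t,\mu_t,\hat\alpha_t)$ equals the difference of Hamiltonians minus the difference of the drift-pairings with $Y_t$.

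Next I would apply It\^o's formula to the product $\langle U_t - X_t, Y_t\rangle$ over $[0,T]$. Since $U$ and $X$ share the same constant volatility $\sigma$, the difference $U_t - X_t$ has no martingale part, so the stochastic integrals involving $Z_t$ vanish in expectation and the cross-variation term drops out. This produces

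\begin{equation*}
\EE\langle U_T - X_T, Y_T\rangle = \EE\int_0^T \Bigl[ \langle b(t,U_t,\mu_t,\beta_t) - b(t,X_t,\mu_t,\hat\alpha_t), Y_t\rangle - \langle U_t - X_t, \partial_x H(t,X_t,\mu_t,Y_t,\hat\alpha_t)\rangle \Bigr] dt.
\end{equation*}

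Substituting this into the terminal estimate and combining with the running-cost rewrite, the drift-pairing terms cancel against the $\langle b, Y\rangle$ contributions, leaving the cost gap bounded below by $\EE\int_0^T [ H(t,U_t,\mu_t,Y_t,\beta_t) - H(t,X_t,\mu_t,Y_t,\hat\alpha_t) - \langle U_t - X_t, \partial_x H\rangle ] dt$.

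The final step is to invoke the $\lambda$-convexity of $f$ from \eqref{fo:lambdaconvexity}, which transfers to $H$ in the joint variable $(x,\alpha)$ because $b$ is affine in $(x,\alpha)$ (assumptions (A.1) and (A.3)), so the linear term $\langle b, Y\rangle$ contributes nothing to the second-order convexity gap. This yields, for each $t$,

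\begin{equation*}
H(t,U_t,\mu_t,Y_t,\beta_t) - H(t,X_t,\mu_t,Y_t,\hat\alpha_t) - \langle (U_t - X_t, \beta_t - \hat\alpha_t), \partial_{(x,\alpha)} H(t,X_t,\mu_t,Y_t,\hat\alpha_t)\rangle \ge \lambda \vert \beta_t - \hat\alpha_t\vert^2.
\end{equation*}

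Because $\hat\alpha_t$ minimizes the Hamiltonian, $\partial_\alpha H(t,X_t,\mu_t,Y_t,\hat\alpha_t) = 0$ (Lemma \ref{le:minimizer}), so the $\alpha$-component of the gradient pairing drops out and what remains matches exactly the $\partial_x H$ term already accounted for, delivering the desired bound $J(\beta;\mu) \ge J(\hat\alpha;\mu) + \lambda\EE\int_0^T \vert \beta_t - \hat\alpha_t\vert^2 dt$. The main technical obstacle I anticipate is \emph{justifying the integrability needed to apply It\^o's formula and discard the martingale terms}: one must check that $\langle U_t - X_t, Z_t\rangle$ and the analogous $\sigma$-driven stochastic integrals are genuine martingales, which requires combining the admissibility bound \eqref{eq:4:4:1}, the square-integrability \eqref{eq:4:4:3} of $(X,Y,Z)$, and standard a priori estimates on $U$ to control $\EE\int_0^T(\vert U_t - X_t\vert^2 + \vert Z_t\vert^2)\,dt$. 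Everything else is a bookkeeping exercise in convexity, but this integrability verification is where the hypotheses must be used carefully.
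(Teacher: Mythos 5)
Your proposal is correct and follows essentially the same route as the paper: the paper cites the standard computation (It\^o on $\langle U_t-X_t,Y_t\rangle$ plus convexity of $g$, as in Pham's Theorem 6.4.6) to reach exactly the intermediate inequality you derive, and then concludes from the $\lambda$-convexity of $H$ inherited from \eqref{fo:lambdaconvexity} via the affineness of $b$, together with $\partial_{\alpha}H(t,X_t,\mu_t,Y_t,\hat\alpha_t)=0$. Your explicit attention to the integrability needed to kill the martingale terms, and to the admissibility of $\hat\alpha$ via \eqref{eq:3:5:1} and \eqref{eq:4:4:3}, is exactly the point the paper leaves implicit.
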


\begin{proof}  By Lemma \ref{le:minimizer}, $\hat{\alpha}=(\hat{\alpha}_{t})_{0 \leq t \leq T}$ satisfies 
\eqref{eq:4:4:1}, and the standard proof of the stochastic maximum principle, see for example Theorem 6.4.6 in Pham
\cite{Pham.book} gives
\begin{equation*}
\begin{split}
J\bigl(\beta;\mu\bigr) &\geq J\bigl(\hat{\alpha}; \mu\bigr)
 + \EE \int_{0}^T 
\bigl[ H(t,U_{t},\mu_{t},Y_{t},\beta_{t}) - H(t,X_{t},\mu_{t},Y_{t},\hat{\alpha}_{t}) 
\\
&\hspace{-12pt} - \langle U_{t} - X_{t},\partial_{x} H(t,X_{t},\mu_{t},Y_{t},\hat{\alpha}_{t}) \rangle
- \langle \beta_{t} - \hat{\alpha}_{t},\partial_{\alpha} H(t,X_{t},\mu_{t},Y_{t},\hat{\alpha}_{t}) \rangle
\bigr] dt.
\end{split}
\end{equation*}
 By linearity of $b$ and assumption (A.2) on $b$, the Hessian of $H$ satisfies \eqref{fo:lambdaconvexity}, so that the required convexity assumption is satisfied. The result easily follows.
 \end{proof}

\begin{remark}
\label{rem:SMP0}
As the proof shows, the result of Theorem \ref{thm:SMP} above still holds if the control $\beta=(\beta_t)_{0\le t\le T}$ is merely adapted to a larger filtration
as long as the Wiener process $ W=(W_t)_{0\le t\le T}$ remains a Brownian motion for this filtration. 
\end{remark}

\begin{remark}
\label{rem:SMP}
Theorem \ref{thm:SMP} has interesting consequences. First, it says that the optimal control, if it exists, must be unique. 
Second, it also implies that, given two solutions $( X, Y, Z)$ and $( X', Y', Z')$ to \eqref{eq:4:4:6}, $d\PP \otimes dt$ a.e.  it holds
$$
\hat{\alpha}(t,X_{t},\mu_{t},Y_{t}) = \hat{\alpha}(t,X_{t}',\mu_{t},Y_{t}'),
$$
so that $X$ and $X'$ coincide by the Lipschitz property of the coefficients of the forward equation.
As a consequence, $(Y,Z)$ and $(Y',Z')$ coincide as well. 
\end{remark}

It should be noticed that in some sense, the bound provided by Theorem \ref{thm:SMP} is sharp within the realm of convex models as shown for example by  the following slight variation on the same theme. We shall use this form repeatedly in the proof of our main result.

\begin{proposition}
\label{prop:SMP}
Under the same assumptions and notation as in Theorem \ref{thm:SMP} above, if we consider in addition 
another measurable and bounded mapping $[0,T] \ni t \hookrightarrow \mu_{t}'\in\cP_2(\RR^d)$
and  the controlled diffusion process  $U'=(U'_{t})_{0 \leq t \leq T}$ defined by
$$
U_{t}' = x_{0}' + \int_{0}^t b(s,U_{s}',\mu_{s}',\beta_{s}) ds + \sigma W_{t}, \quad t \in [0,T],
$$
for an initial condition $x_{0}' \in \RR^d$ possibly different from $x_{0}$,  then, 
\begin{equation}
\label{eq:9:4:1}
\begin{split}
& J\bigl(\hat{\alpha};\mu\bigr) + 
\langle x_{0}'-x_{0},Y_{0} \rangle  + 
\lambda \EE \int_{0}^T \vert \beta_{t} - \hat{\alpha}_{t} \vert^2 dt
\\
&\hspace{15pt}
\leq J\bigl(\bigl[\beta,\mu'\bigr]; \mu\bigr) + \EE \biggl[ \int_{0}^T 
\langle b_{0}(t,\mu_{t}')
- b_{0}(t,\mu_{t}),Y_{t}\rangle dt \biggr],
\end{split}
\end{equation}
where 
\begin{equation}
\label{eq:7:6:1}
J\bigl(\bigl[\beta,\mu' \bigr]; \mu\bigr) = 
{\mathbb E}
\biggl[ g(U_{T}',\mu_{T}) + \int_{0}^T f(t,U_{t}',\mu_{t},\beta_{t}) dt \biggr].
\end{equation}
The parameter $[\beta,\mu']$ in the cost $J([\beta,\mu']; \mu)$ indicates that the flow of measures in the drift of $U'$ is $(\mu_{t}')_{0 \leq t \leq T}$ whereas the flow of measures in the cost functions  is $(\mu_{t})_{0 \leq t \leq T}$. In fact, we should also indicate that the initial condition $x_{0}'$ might be different from $x_{0}$, but we prefer not to do so since there is no risk of confusion in the sequel. Also, when $x_{0}'=x_{0}$ and $\mu_{t}'=\mu_{t}$ for any $t \in [0,T]$, $J([\beta,\mu']; 
\mu)=J(\beta; \mu)$.
\end{proposition}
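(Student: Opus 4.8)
The plan is to follow the proof of Theorem \ref{thm:SMP} almost verbatim, tracking the two extra contributions caused by the mismatch of the initial conditions ($x_0'$ versus $x_0$) and of the measure flows ($\mu'$ versus $\mu$ in the drift of $U'$). First I would lower-bound the difference $J([\beta,\mu'];\mu) - J(\hat\alpha;\mu)$. The terminal term is handled by convexity of $g$ in $x$ (assumption (A.4)): since the \emph{same} flow $\mu$ enters both terminal costs, one has $g(U_T',\mu_T) - g(X_T,\mu_T) \geq \langle U_T' - X_T, \partial_x g(X_T,\mu_T)\rangle = \langle U_T' - X_T, Y_T\rangle$, using the terminal condition in \eqref{eq:4:4:6}.

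Next I would apply It\^o's formula to $\langle U_t' - X_t, Y_t\rangle$. Because $U'$ and $X$ share the same diffusion coefficient $\sigma$, the difference $U' - X$ has no martingale part, so the quadratic-covariation term vanishes; and the integrability \eqref{eq:4:4:3} together with the admissibility of $\beta$ (hence $\EE[\sup_{t}|U_t'|^2] < +\infty$ by standard SDE estimates under the bounded affine structure \eqref{fo:drift}) kills the stochastic integrals in expectation. Evaluating at $t=0$ produces the boundary term $\langle U_0' - X_0, Y_0\rangle = \langle x_0' - x_0, Y_0\rangle$, which accounts for the second term on the left of \eqref{eq:9:4:1}. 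After taking expectations this rewrites $\EE[\langle U_T' - X_T, Y_T\rangle]$ in terms of the drift of $U'-X$ paired with $Y$ and of $U'-X$ paired with $-\partial_x H$ evaluated along $(X,Y,\hat\alpha)$.

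The key algebraic step is to reconstruct the Hamiltonian $H$ at the cost-measure $\mu$ out of the drift of $U'$, which carries $\mu'$. This is exactly where the affine dependence of $b$ on $\mu$ enters: by \eqref{fo:drift} the only $\mu$-dependence sits in the additive term $b_0$, so $b(t,U_t',\mu_t',\beta_t) = b(t,U_t',\mu_t,\beta_t) + [b_0(t,\mu_t') - b_0(t,\mu_t)]$. Substituting and collecting the running-cost terms turns the integrand into $H(t,U_t',\mu_t,Y_t,\beta_t) - H(t,X_t,\mu_t,Y_t,\hat\alpha_t) - \langle U_t' - X_t, \partial_x H(t,X_t,\mu_t,Y_t,\hat\alpha_t)\rangle$, plus the leftover $\langle b_0(t,\mu_t') - b_0(t,\mu_t), Y_t\rangle$ that becomes the extra term on the right of \eqref{eq:9:4:1}. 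Finally, since $b$ is affine in $(x,\alpha)$ the Hessian of $H$ equals that of $f$, so $H$ inherits the $\lambda$-convexity \eqref{fo:lambdaconvexity}; applying it with $(x',\alpha') = (U_t',\beta_t)$ and $(x,\alpha) = (X_t,\hat\alpha_t)$, and using $\partial_\alpha H(t,X_t,\mu_t,Y_t,\hat\alpha_t) = 0$ (the defining property of the minimizer $\hat\alpha_t$ from Lemma \ref{le:minimizer}), bounds the first three terms below by $\lambda|\beta_t - \hat\alpha_t|^2$. Rearranging yields \eqref{eq:9:4:1}.

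I expect the only genuinely delicate point to be the bookkeeping around the measure mismatch: one must be careful that the Hamiltonian appearing after the It\^o expansion is evaluated at $\mu$ (the cost flow) and not $\mu'$, so that the convexity inequality \eqref{fo:lambdaconvexity} and the first-order condition for $\hat\alpha$ can both be invoked; the discrepancy between $\mu$ and $\mu'$ is quarantined entirely into the single term $\langle b_0(t,\mu_t') - b_0(t,\mu_t), Y_t\rangle$ precisely thanks to (A.3). All remaining estimates---finite second moments of $U'$ and the vanishing of the martingale terms---are routine.
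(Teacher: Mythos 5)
Your proposal is correct and follows essentially the same route as the paper's own (very terse) proof: expand $\langle U_t'-X_t,Y_t\rangle$ together with the running-cost difference by It\^o's formula, isolate the boundary term $\langle x_0'-x_0,Y_0\rangle$ and the drift mismatch $\langle b_0(t,\mu_t')-b_0(t,\mu_t),Y_t\rangle$ via the affine structure (A.3), and conclude with the joint $\lambda$-convexity of $H$ in $(x,\alpha)$ and the first-order condition $\partial_\alpha H(t,X_t,\mu_t,Y_t,\hat\alpha_t)=0$. The only point worth double-checking is the sign with which the drift-mismatch term lands: careful bookkeeping produces $+\EE\int_0^T\langle b_0(t,\mu_t')-b_0(t,\mu_t),Y_t\rangle\,dt$ on the left-hand side of \eqref{eq:9:4:1} (equivalently, with a minus sign on the right), rather than with the plus sign on the right as displayed, but this discrepancy is immaterial since every subsequent use of the proposition only bounds the absolute value of this term.
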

 
\begin{proof}
The idea is to go back to the original proof of the stochastic maximum principle and using It\^o's formula, expand 
$$
\biggl( \langle U_{t}'-X_{t},Y_{t} \rangle + \int_{0}^t \bigl[ 
f(s,U_{s}',\mu_{s},\beta_{s}) - f(s,X_{s},\mu_{s},\hat{\alpha}_{s})  \bigr] ds \biggr)_{0 \leq t \leq T}.
$$
Since the initial conditions $x_{0}$ and $x_{0}'$ are possibly different, we get the additional  term
$\langle x_{0}'-x_{0},Y_{0} \rangle$  in the left hand side of \eqref{eq:9:4:1}. Similarly, since the drift of $ U'$ is driven by 
$(\mu_{t}')_{0 \leq t \leq T}$, we get the additional difference of the drifts in order to account for the fact that the drifts are driven by the different flows of probability measures. 
\end{proof}

\section{\textbf{The Mean-Field FBSDE}}
\label{sec:3}
In order to solve the standard stochastic control problem \eqref{fo:mfgcontrolpb} using the 
Pontryagin maximum principle, we minimize the Hamiltonian $H$ 
with respect to the control variable $\alpha$, and inject the minimizer $\hat\alpha$ into the forward equation of the state as well as the adjoint backward equation. Since the minimizer $\hat\alpha$ depends upon both the forward state $X_t$ and the adjoint process $Y_t$, this creates a strong coupling
between the forward and backward equations leading to the FBSDE \eqref{eq:4:4:6}. 
The MFG matching condition (iii) of Subsection \ref{subsec:2:2} then reads: seek a family of probability distributions $(\mu_{t})_{0 \leq t \leq T}$ of order 2 such that the process $X$ solving the forward equation of
\eqref{eq:4:4:6} admits $(\mu_{t})_{0 \leq t \leq T}$ as flow of marginal distributions. 

In a nutshell, the probabilistic approach to the solution of the mean-field game problem
results in the solution of a FBSDE of the McKean-Vlasov type
\begin{equation}
\label{fo:mfFBSDE}
\begin{split}
&dX_{t} = b \bigl( t,X_{t},\PP_{X_{t}},\hat{\alpha}(t,X_{t},\PP_{X_{t}},Y_{t})\bigr) dt + \sigma dW_{t},
\\
&dY_{t} = - \partial_{x} H\bigl(t,X_{t},\PP_{X_{t}},Y_t,\hat{\alpha}(t,X_{t},\PP_{X_{t}},Y_{t}) \bigr) dt  + Z_{t}dW_{t},
\end{split}
\end{equation}
with the initial condition $X_{0}=x_{0}\in \RR^d$, and terminal condition $Y_{T} = \partial_{x} g(X_{T},\PP_{X_{T}})$.  
To the best of our knowledge, this type of FBSDE has not been considered in the existing literature. However, our experience with the classical theory of FBSDEs tells us that existence and uniqueness are expected to hold in short time when the coefficients driving \eqref{fo:mfFBSDE} are Lipschitz-continuous in the variables $x$, $\alpha$ and $\mu$ from standard contraction arguments. This strategy can also be followed in the McKean-Vlasov setting, taking advantage of the Lipschitz regularity of the coefficients upon the parameter $\mu$ for the 2--Wasserstein distance, exactly as in the theory of McKean-Vlasov (forward) SDEs. See Sznitman \cite{Sznitman}. However, the short time restriction is not really satisfactory for many reasons, and in particular for practical applications.
Throughout the paper, all the regularity properties with respect to $\mu$ are understood in the sense of the 
$2$--Wasserstein's distance $W_2$. Whenever $E$ is a separable Banach space, for any $p\ge 1$, $\mu,\mu' \in {\mathcal P}_{p}(E)$, the distance $W_{p}(\mu,\mu')$ is defined  by:
\begin{equation*}
\begin{split}
&W_p(\mu,\mu')
\\
&=\inf\left\{\left[\int_{E\times E} |x-y|_{E}^p \, \pi(dx,dy)\right]^{1/p};\;\pi\in\cP_p(E\times E) \mbox{ with marginals } \mu \mbox{ and } \mu'\right\}.
\end{split}
\end{equation*}
Below, we develop an alternative approach and prove existence of a solution over arbitrarily prescribed time duration $T$. The crux of the proof is to take advantage of the convexity of the coefficients. Indeed, in optimization theory, convexity often leads to compactness. Our objective is then to take advantage of this compactness in order to solve the matching problem (iii) in \eqref{fo:mfgcontrolpb}
 by applying Schauder's fixed point theorem in an appropriate space of finite measures on ${\mathcal C}([0,T];\RR^d)$. 

For the sake of convenience, we restate the general FBSDE \eqref{fo:mfFBSDE} of McKean-Vlasov type in the special set-up of the present paper. It reads:
\begin{equation}
\label{fo:actual_mfFBSDE}
\begin{split}
&dX_{t} = \bigl[ b_0( t,\PP_{X_{t}})  + b_1(t)X_t +b_2(t)\hat{\alpha}(t,X_{t},\PP_{X_{t}},Y_{t})\bigr] dt + \sigma dW_{t},
\\
&dY_{t} = - \bigl[b_1^{\dagger}(t)Y_t +\partial_{x} f\bigl(t,X_{t},\PP_{X_{t}},\hat{\alpha}(t,X_{t},\PP_{X_{t}},Y_{t}) \bigr)\bigr]dt  + Z_{t}dW_{t},
\end{split}
\end{equation}
where $a^\dagger$ denotes the transpose of the matrix $a$.
\subsection{Standing Assumptions and Main Result}  
In addition to (A.1--4), we shall rely on the following assumptions.

\vskip 2pt\noindent
\textbf{(A.5)} The functions $[0,T] \ni t \hookrightarrow f(t,0,\delta_{0},0)$, $[0,T] \ni t \hookrightarrow \partial_{x}f(t,0,\delta_{0},0)$  and $[0,T] \ni t \hookrightarrow \partial_{\alpha}f(t,0,\delta_{0},0)$ are bounded by $c_L$, and, for all $t \in [0,T]$, $x,x' \in \RR^d$, $\alpha,\alpha' \in \RR^k$ and $\mu,\mu' \in {\mathcal P}_{2}(\RR^d)$, it holds:
\begin{equation*}
\begin{split}
&\bigl\vert (f,g)(t,x',\mu',\alpha') - (f,g)(t,x,\mu,\alpha)\bigr\vert 
\\
&\hspace{15pt} 
\leq c_{L} \bigl[ 1 + \vert (x',\alpha') \vert + \vert (x,\alpha) \vert + M_{2}(\mu) + M_{2}(\mu') \bigr]
 \bigl[ \vert (x',\alpha') - (x,\alpha) \vert +
W_{2}(\mu',\mu) \bigr].
\end{split}
\end{equation*} 
Moreover, $b_{0}$, $b_{1}$ and $b_{2}$ in \eqref{fo:drift} are bounded by $c_L$ and $b_{0}$ satisfies for any 
$\mu,\mu' \in {\mathcal P}_{2}(\RR^d)$:
$
\vert b_{0}(t,\mu') - b_{0}(t,\mu) \vert \leq c_L W_{2}(\mu,\mu')$.

\vskip 2pt\noindent
\textbf{(A.6)} For all $t \in [0,T]$, $x \in \RR^d$ and $\mu \in {\mathcal P}_{2}(\RR^d)$,
$\vert \partial_{\alpha }f(t,x,\mu,0) \vert \leq c_L$.

\vskip 2pt\noindent
\textbf{(A.7)} For all $(t,x) \in [0,T] \times \RR^d$, 
$\langle x, \partial_{x} f(t,0,\delta_{x},0) \rangle \geq - c_{L} (1+ \vert x \vert)$, 
$\langle x, \partial_{x} g(0,\delta_{x}) \rangle \geq - c_{L} (1+ \vert x \vert)$.

\vskip 6pt
\begin{theorem}
\label{prop:27:11:1}
Under (A.1--7), the forward-backward system \eqref{fo:mfFBSDE} has a solution. 
Moreover, for any solution $(X_{t},Y_{t},Z_{t})_{0 \leq t \leq T}$ to \eqref{fo:mfFBSDE},
there exists a function $u : [0,T] \times \RR^d \hookrightarrow \RR^d$ 
(referred to as the  FBSDE value function), satisfying the growth and Lipschitz properties
\begin{equation}
\label{eq:28:3:14}
\forall t \in [0,T], \quad \forall x,x' \in \RR^d, \quad 
\left\{
\begin{array}{l}
\vert u(t,x \vert \leq c ( 1+ \vert x \vert),
\\
\vert u(t,x) - u(t,x') \vert \leq c \vert x - x' \vert,
\end{array}
\right.
\end{equation}
for some constant $c\geq 0$, and such that, $\PP$-a.s., for all 
$t \in [0,T]$,  $Y_{t}=u(t,X_{t})$. In particular, for any $\ell \geq 1$, 
${\mathbb E}[\sup_{0 \leq t \leq T} \vert X_{t}\vert^{\ell}] < + \infty$. 
\end{theorem}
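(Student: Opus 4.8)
The plan is to prove existence by a Schauder fixed point argument in a space of flows of measures, and then to build the value function $u$ by a decoupling argument once the flow $\PP_{X_{t}}$ attached to a given solution has been frozen. The two tasks share one enabling fact: for a measurable bounded flow $(\mu_{t})_{0\le t\le T}$ in $\cP_2(\RR^d)$, the \emph{standard} FBSDE \eqref{eq:4:4:6} is solvable over the full horizon. Indeed, by strict convexity of the cost in $(x,\alpha)$ from (A.2) and (A.4), the frozen-$\mu$ control problem \eqref{fo:mfgcontrolpb} has a unique optimal control, and Pontryagin's principle identifies the optimal state and adjoint as a solution of \eqref{eq:4:4:6}, unique by Remark \ref{rem:SMP}.

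For existence I would consider the map $\Phi:(\mu_{t})_{t}\mapsto(\PP_{X_{t}})_{t}$ (more precisely, the law of $X$ on $\mathcal C([0,T];\RR^d)$), whose fixed points are exactly the solutions of \eqref{fo:mfFBSDE}. To run Schauder I would (i) use a priori estimates to exhibit a convex set of flows with uniformly bounded second moments that is stable under $\Phi$; (ii) upgrade these to time-regularity estimates on $X$ so that the image flows are tight, hence the set is relatively compact for the weak topology; and (iii) prove continuity of $\Phi$, which follows from the stability of \eqref{eq:4:4:6} under perturbation of $\mu$, quantified by Proposition \ref{prop:SMP} together with the Lipschitz property of $\hat\alpha$ from Lemma \ref{le:minimizer}. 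I expect existence to be established first for bounded coefficients, where the moment bounds are immediate, and then extended to the linear-growth setting of (A.5) by truncation, passing to the limit thanks to uniform a priori estimates; this extension is where (A.6)--(A.7) do the real work.

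For the value function, fix a solution $(X,Y,Z)$ and set $\mu_{t}=\PP_{X_{t}}$, so that $(X,Y,Z)$ solves \eqref{eq:4:4:6} with this now genuinely deterministic flow. For each $(t,x)$ I would solve \eqref{eq:4:4:6} afresh on $[t,T]$ from $X_{t}=x$ and set $u(t,x)=Y_{t}$, which is deterministic since the restarted filtration is trivial at time $t$; the identity $Y_{t}=u(t,X_{t})$ and the consistency of $u$ follow from the flow property and the uniqueness in Remark \ref{rem:SMP}. The Lipschitz bound \eqref{eq:28:3:14} is the heart of the matter. Taking two solutions from $x$ and $x'$, I would apply It\^o's formula to $\langle X_{s}-X_{s}',Y_{s}-Y_{s}'\rangle$: the $b_{1}$ terms cancel, the relation $b_{2}^{\dagger}Y=-\partial_{\alpha}f(\cdot,\hat\alpha)$ from $\partial_{\alpha}H=0$ turns the remaining drift into $-\langle(X-X',\hat\alpha-\hat\alpha'),\partial_{(x,\alpha)}f-\partial_{(x,\alpha)}f'\rangle$, and the $\lambda$-convexity \eqref{fo:lambdaconvexity} together with the convexity of $g$ in (A.4) yields
\begin{equation*}
\langle x-x',u(t,x)-u(t,x')\rangle\ \ge\ 2\lambda\,\EE\int_{t}^T|\hat\alpha_{s}-\hat\alpha_{s}'|^2\,ds\ \ge\ 0 .
\end{equation*}
Combining this with the forward estimate $\EE[\sup_{s}|X_{s}-X_{s}'|^2]\le C(|x-x'|^2+I)$ and the backward estimate $|u(t,x)-u(t,x')|^2=|Y_{t}-Y_{t}'|^2\le C(|x-x'|^2+I)$, where $I=\EE\int_{t}^T|\hat\alpha_{s}-\hat\alpha_{s}'|^2ds$, squeezes $I\le(2\lambda)^{-1}|x-x'|\,|u(t,x)-u(t,x')|$, and a Young's inequality closes the estimate to $|u(t,x)-u(t,x')|\le c|x-x'|$ with $c$ depending only on $\lambda$, $c_{L}$, the bounds on $b_{1},b_{2}$ and $T$. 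The growth bound follows from an analogous It\^o expansion of $\langle X_{s},Y_{s}\rangle$, where the one-sided conditions (A.7) are exactly what prevents the adjoint from blowing up; and once $Y_{t}=u(t,X_{t})$ with $u$ Lipschitz of linear growth, the forward drift is of linear growth in $X_{t}$, so $\EE[\sup_{t}|X_{t}|^{\ell}]<+\infty$ for all $\ell\ge1$ is a routine consequence of Gronwall and Burkholder--Davis--Gundy.

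The main obstacle, I expect, is obtaining a priori estimates that are \emph{uniform in the frozen flow} and \emph{valid for arbitrary $T$} with no small-time contraction: this is what makes both the Schauder invariance/compactness step and the Lipschitz constant of $u$ work, and it is precisely here that convexity (A.2, A.4) and the one-sided growth conditions (A.7) must be exploited. An added difficulty is that (A.7) is stated at the diagonal measure $\delta_{x}$ whereas the McKean--Vlasov term carries the genuine law $\PP_{X_{t}}$, so some care is needed to reconcile the two; the passage from bounded to linear-growth coefficients is the other technically demanding point.
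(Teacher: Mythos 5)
Your architecture matches the paper's: Schauder's theorem for the map $\Phi$ from flows of measures to the law of the controlled state, existence first under bounded gradients (Proposition \ref{prop:17:3:1}), an approximation step to reach linear growth (Lemma \ref{lem:5:5:1}), and a decoupling field $u(t,x)=Y_{t}^{t,x}$ whose Lipschitz constant comes from the convexity-induced monotonicity inequality $\langle x-x',u(t,x)-u(t,x')\rangle \ge 2\lambda\,\EE\int_{t}^T\vert\hat\alpha_{s}-\hat\alpha_{s}'\vert^2 ds$; this is exactly \eqref{eq:9:4:6}, which the paper obtains by applying Proposition \ref{prop:SMP} twice and summing, and your ``squeeze plus Young'' closing is the paper's computation in Lemma \ref{lem:unq:frozen}. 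Two steps, however, are genuinely incomplete. The first concerns global-in-time solvability of the frozen FBSDE \eqref{eq:4:4:6}, which you assert via ``strict convexity gives a unique optimal control, and Pontryagin identifies it with a solution of \eqref{eq:4:4:6}.'' Strict convexity gives uniqueness of a minimizer, not existence; you would need a separate direct-method argument (coercivity and weak lower semicontinuity of $J(\cdot;\mu)$ on $\HH^{2,k}$), plus solvability of the adjoint BSDE along the optimal state, before the necessary form of the maximum principle yields a solution of \eqref{eq:4:4:6}. The paper instead runs Delarue's continuation method: unique solvability on intervals of length $\delta$, iterated backward, with the uniform bound $\vert Y_{t_{0}}^{t_{0},x_{0}}-Y_{t_{0}}^{t_{0},x_{0}'}\vert\le c\vert x_{0}-x_{0}'\vert$ as the hypothesis that allows the iteration to cover $[0,T]$ --- and that bound is obtained from precisely the monotonicity estimate you reserve for the regularity of $u$. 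As written, your existence claim for the frozen problem is unsupported, even though you already hold the estimate that would repair it.

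The second gap is the passage from bounded $\partial_{x}f,\partial_{x}g$ to linear growth, which you describe as ``truncation.'' Truncating the gradient of a convex function destroys convexity, which is the engine of all your compactness and monotonicity arguments, and it also threatens (A.7). The paper's construction (Lemma \ref{lem:17:3:1} and the Appendix) is two-staged: first perturb $f,g$ by $\frac1n\vert x\vert^2$ to gain strong convexity, then replace the strongly convex costs by truncated Legendre transforms $\sup_{\vert y\vert\le n}\inf_{z}[\langle y,x-z\rangle+f(z,\mu,\alpha)]$, which have bounded $x$-gradients, coincide with $f,g$ on bounded sets for $n$ large, and satisfy (A.1--7) with constants uniform in $n$ --- preserving (A.5) and (A.7) requires additional push-forward surgery on the measure argument. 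Moreover, the uniform a priori bound $\sup_{n}\EE\int_{0}^T\vert\hat\alpha^n_{s}\vert^2 ds<+\infty$ that drives tightness in Lemma \ref{lem:5:5:1} is extracted by comparing the optimal cost with the costs of the reference controls $\beta^n=\EE(\hat\alpha^n)$ and $\beta^n\equiv 0$: the first comparison bounds $\mathrm{Var}(X^n_{s})$ uniformly, which is what licenses replacing $\PP_{X^n_{s}}$ by $\delta_{\EE(X^n_{s})}$ and invoking (A.7) at a Dirac mass --- the reconciliation you correctly flag as a difficulty but do not carry out. These two constructions are the substantive content of the existence proof, and the proposal leaves them as acknowledged obstacles rather than resolved steps; the value-function half of your argument, by contrast, is essentially the paper's.
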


(A.5) provides Lipschitz continuity while condition 
(A.6) controls the smoothness of the running cost $f$ with respect to $\alpha$ uniformly in the other variables. 
The most unusual assumption is certainly condition (A.7). We refer to it as a \emph{weak mean-reverting} condition as it looks like a standard mean-reverting condition for recurrent diffusion processes. Moreover, as shown by 
the proof of Theorem \ref{prop:27:11:1}, Its role is to control the expectation of the forward equation in \eqref{fo:mfFBSDE} and to establish an a priori bound for it. This is of crucial importance in order to make the compactness strategy effective. We use the terminology \emph{weak} as it is not expected to converge with time. 

\begin{remark}
An interesting example which we should keep in mind is the so-called linear-quadratic model in which $b_{0}$, $f$ and $g$ have the form:
\begin{equation*}
b_{0}(t,\mu) = b_{0}(t) \overline  \mu,
\  g(x,\mu) = \frac{1}{2} \bigl\vert q x + \bar{q}
\overline \mu  \bigr\vert^2,
\ 
f(t,x,\mu,\alpha) = \frac{1}{2} \bigl\vert m(t) x + \bar{m}(t) \overline \mu \bigr\vert^2 
+ \frac{1}{2} \vert n(t) \alpha \vert^2,
\end{equation*}
where $q$, $\bar{q}$, $m(t)$ and $\bar{m}(t)$ are elements of $\RR^{d \times d}$, 
$n(t)$ is an element of $\RR^{k \times k}$ and $\overline \mu$ stands for the mean of $\mu$. In this framework, (A.7) says that
$\bar{q}^\dagger q \geq 0$ and $\bar{m}(t)^\dagger m(t) \geq 0$ in the sense of quadratic forms.
In the one-dimensional case $d=1$, (A.7) says that $q\bar{q}$ and $m(t) \bar{m}(t)$ must be non-negative. 
As shown in \cite{CarmonaDelarueLachapelle}, this condition is not optimal for existence, as the conditions $q(q+\bar{q}) \geq 0$ and 
$m(t) (m(t)+\bar{m}(t)) \geq 0$ are sufficient to guarantee the solvability of \eqref{fo:mfFBSDE}. Obviously, the gap between these conditions is the price to pay for treating general systems within a single framework. 
\end{remark}

\subsection{Rigorous Definition of the Matching Problem} 
The proof of Theorem \ref{prop:27:11:1} is split into four main steps. The first one consists in making the statement of the matching problem (iii) in \eqref{fo:mfgcontrolpb} rigorous. To this end, we need the following
\begin{lemma}
\label{lem:unq:frozen}
Given $\mu \in {\mathcal P}_{2}({\mathcal C}([0,T];\RR^d))$ with marginal distributions 
 $(\mu_{t})_{0 \leq t \leq T}$, the FBSDE \eqref{eq:4:4:6} is uniquely solvable. If we denote its solution by $(X_{t}^{x_{0};\mu},Y_{t}^{x_{0};\mu},Z_{t}^{x_{0};\mu})_{0 \leq t \leq T}$, then there exist a constant $c>0$, only depending upon the parameters of (A.1--7), and a locally bounded measurable function $u^{\mu} : [0,T] \times \RR^d \hookrightarrow \RR^d$ such that
 $$
\forall x,x' \in \RR^d,\quad\vert u^{\mu}(t,x') - u^{\mu}(t,x) \vert \leq c \vert x' - x \vert,  
$$
and $\PP$-a.s., for all $t \in [0,T]$, $Y_{t}^{x_{0};\mu} = u^{\mu}(t,X_{t}^{x_{0};\mu})$. 
\end{lemma}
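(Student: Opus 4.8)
The plan is to exploit the fact that, once the flow $(\mu_{t})_{0\le t\le T}$ is frozen, the FBSDE \eqref{eq:4:4:6} is exactly the adjoint system of the convex control problem (ii), so that existence, uniqueness and the decoupling field can all be read off from the associated optimization. First I would establish unique solvability of \eqref{eq:4:4:6} over the whole interval $[0,T]$, without any small-time restriction. By (A.1) and (A.3) the controlled state $U$ depends affinely on the control $\beta\in\HH^{2,k}$, and by (A.2) and (A.4) the cost $J(\cdot;\mu)$ is convex; the $\lambda$-convexity in $\alpha$ of \eqref{fo:lambdaconvexity} together with the bound (A.6) on $\partial_{\alpha}f(\cdot,\cdot,\cdot,0)$ makes $J(\cdot;\mu)$ coercive on $\HH^{2,k}$. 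The direct method then yields a unique minimizer, and the necessary part of the Pontryagin principle produces an adjoint pair $(Y,Z)$ solving the backward equation of \eqref{eq:4:4:6} with $\beta_{t}=\hat\alpha(t,X_{t},\mu_{t},Y_{t})$, i.e. a solution of the full coupled system. Uniqueness is immediate from Remark \ref{rem:SMP}: any two solutions generate the same (necessarily unique) optimal control, hence the same forward state by the Lipschitz property of $\hat\alpha$, and then the same $(Y,Z)$.

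Second, I would construct $u^{\mu}$ as the decoupling field. Since $(\mu_{t})_{t}$ is deterministic, the coefficients of \eqref{eq:4:4:6} are deterministic in time, so for each $(t,x)$ the same argument gives a unique solution $(X^{t,x},Y^{t,x},Z^{t,x})$ of the system run on $[t,T]$ from $X_{t}^{t,x}=x$; I set $u^{\mu}(t,x):=Y_{t}^{t,x}$, which is deterministic. To obtain $Y_{s}^{x_{0};\mu}=u^{\mu}(s,X_{s}^{x_{0};\mu})$ for the solution started at $(0,x_{0})$, I would condition on $\mathcal F_{s}$: because $(W_{r}-W_{s})_{r\ge s}$ is a Brownian motion independent of $\mathcal F_{s}$, the restriction of the global solution to $[s,T]$ solves, conditionally, the FBSDE on $[s,T]$ from the $\mathcal F_{s}$-measurable value $X_{s}^{x_{0};\mu}$ (this is where Remark \ref{rem:SMP0} is needed, to allow controls adapted to the enlarged filtration), and conditional uniqueness forces the representation. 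Local boundedness and measurability of $u^{\mu}$ then follow from the a priori bounds on $Y$ together with the Lipschitz estimate below.

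The technical heart is the Lipschitz bound on $u^{\mu}(t,\cdot)$, which I would prove by a convexity--duality argument that does not degrade with $T$. Fix $t$ and two starting points $x,x'$, run the system on $[t,T]$ from each, and write $\delta X,\delta Y,\delta\alpha$ for the differences of forward states, adjoint states and optimal controls. Applying Proposition \ref{prop:SMP} twice with $\mu'=\mu$ (so the extra drift term drops out), once for each problem compared against the optimal control of the other, and adding the two inequalities, the value functions cancel and I obtain the one-sided estimate
\begin{equation*}
2\lambda\,\EE\int_{t}^{T} |\delta\alpha_{s}|^2\,ds \le \langle x'-x,\,u^{\mu}(t,x')-u^{\mu}(t,x)\rangle \le |x'-x|\,|u^{\mu}(t,x')-u^{\mu}(t,x)|.
\end{equation*}
Independently, standard stability estimates for the forward SDE and for the linear backward equation (whose driver and terminal condition are Lipschitz by (A.2) and (A.4)), combined with the Lipschitz continuity of $\hat\alpha$ from Lemma \ref{le:minimizer}, give a constant $C$ depending only on the data of (A.1--7) such that $|u^{\mu}(t,x')-u^{\mu}(t,x)|^2 = |\delta Y_{t}|^2 \le C\bigl(|x'-x|^2 + \EE\int_{t}^{T}|\delta\alpha_{s}|^2\,ds\bigr)$.

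Putting $a=|u^{\mu}(t,x')-u^{\mu}(t,x)|$ and $b=|x'-x|$, the two displays combine into a quadratic inequality $a^2 \le C b^2 + (C/2\lambda)\,a b$, whence $a\le c\,b$ with $c$ depending only on $\lambda$, $c_{L}$ and the bounds on $b_{1},b_{2}$; this delivers the asserted uniform-in-$t$ Lipschitz constant. I expect the main obstacle to be precisely this estimate: the naive Gronwall coupling of the forward and backward stability bounds is circular and closes only in small time, and it is the sharpness of the maximum-principle bound in Proposition \ref{prop:SMP} that breaks the circularity and yields a Lipschitz constant valid on the arbitrarily long interval $[0,T]$ (the constant $C$ may still carry a factor $e^{CT}$, which is harmless since $T$ is fixed).
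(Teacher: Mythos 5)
Your proof is correct, and its technical heart coincides with the paper's: the uniform-in-$T$ Lipschitz bound on $u^{\mu}(t,\cdot)$ is obtained in the paper by exactly your duality argument --- apply \eqref{eq:9:4:1} with $\mu'=\mu$ for each of the two initial points against the other's optimal control, symmetrize to get $2\lambda\,\EE\int_{t}^T\vert\delta\alpha_s\vert^2ds\leq\langle x'-x,\delta Y_t\rangle$, and close with the standard forward/backward stability estimates (your version is in fact slightly more careful, since the paper's intermediate display omits the $\vert x-x'\vert^2$ term on the right-hand side that you correctly retain). Where you genuinely diverge is in how global solvability and the representation $Y_t=u^{\mu}(t,X_t)$ are obtained. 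The paper proves short-time unique solvability from the Lipschitz driver, and then invokes Delarue's continuation method \cite{Delarue02}: the uniform decoupling-field estimate \eqref{eq:9:4:2} is precisely the input that lets the local solutions be patched over all of $[0,T]$, and the representation of $Y$ in terms of $X$ comes packaged with that scheme. You instead get global existence directly from the strong convexity and coercivity of $J(\cdot;\mu)$ on $\HH^{2,k}$ (direct method, then the first-order condition and the adjoint BSDE to assemble the coupled system), uniqueness from Remark \ref{rem:SMP}, and the representation by conditioning on ${\mathcal F}_s$ and conditional uniqueness. Your route buys independence from the FBSDE induction machinery and makes the variational structure do all the work; the price is that you import the necessary part of the stochastic maximum principle (the paper only states and uses the sufficient part) and you must justify the conditioning step for random, ${\mathcal F}_s$-measurable initial conditions --- which does go through, since the convexity inequalities of Theorem \ref{thm:SMP} and Proposition \ref{prop:SMP} hold verbatim for square-integrable random starting points, but it is a step \cite{Delarue02} would otherwise handle for you. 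Both routes yield the same constant structure, depending only on $\lambda$, $c_L$ and the bounds on $b_1,b_2$.
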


\begin{proof} We know that $\partial_{x} H$ reads
$
\partial_{x} H(t,x,\mu,y,\alpha) = b_{1}^{\dagger}(t) y + \partial_{x} f(t,x,\mu,\alpha)$,
so that, by Lemma \ref{le:minimizer}, the driver 
$
[0,T] \times \RR^d \times \RR^d \ni (t,x,y) \hookrightarrow \partial_{x} H(t,x,\mu_{t},\hat{\alpha}(t,x,\mu_{t},y))
$ 
of the backward equation in \eqref{eq:4:4:6} is Lipschitz continuous in the variables
$(x,y)$, uniformly in $t$. Therefore, by standard results in FBSDE theory, existence and uniqueness hold when $T$ is small enough. Equivalently, when $T$ is arbitrary, there exists $\delta>0$, depending on the Lipschitz constant of the coefficients in the variables $x$ and $y$ such that  
unique solvability holds on $[T-\delta,T]$, that is when the initial condition $x_{0}$ of the forward process is prescribed at some time 
$t_{0} \in [T-\delta,T]$. The solution is then denoted by $(X_{t}^{t_{0},x_{0}},Y_{t}^{t_{0},x_{0}},Z_{t}^{t_{0},x_{0}})_{t_{0} \leq t \leq T}$. Following Delarue \cite{Delarue02}, existence and uniqueness hold on the whole $[0,T]$, provided
\begin{equation}
\label{eq:9:4:2}
\forall x_{0},x_{0}' \in \RR^d, \quad \bigl\vert Y_{t_{0}}^{t_{0},x_{0}} - Y_{t_{0}}^{t_{0},x_{0}'} \bigr\vert^2 
\leq c \vert x_{0} - x_{0}' \vert^2,
\end{equation}
for some constant $c$ independent of $t_{0}$ and $\delta$. Notice that, by Blumenthal's Zero-One Law, the random variables $Y_{t_{0}}^{t_{0},x_{0}}$ and 
$Y_{t_{0}}^{t_{0},x_{0}'}$ are deterministic.
By \eqref{eq:9:4:1}, we have
\begin{equation}
\label{eq:9:4:3}
\hat{J}^{t_{0},x_{0}} + \langle x_{0}'-x_{0},Y_{t_{0}}^{t_{0},x_{0}} \rangle
+ \lambda \EE \int_{t_{0}}^T \vert 
\hat{\alpha}_t^{t_{0},x_{0}}
- \hat{\alpha}_t^{t_{0},x_{0}'} \vert^2 dt 
\leq \hat{J}^{t_{0},x_{0}'},
\end{equation}
where $\hat{J}^{t_{0},x_{0}} = J( (\hat{\alpha}_{t}^{t_{0},x_{0}})_{t_{0} \leq t \leq T} ; \mu)$
and $\hat{\alpha}_t^{t_{0},x_{0}}
=
\hat{\alpha}(t,X_{t}^{t_{0},x_{0}},\mu_{t},Y_{t}^{t_{0},x_{0}})$ (with similar definitions for 
$\hat{J}^{t_{0},x_{0}'}$ and $\hat{\alpha}_t^{t_{0},x_{0}'}$ by replacing $x_{0}$ by $x_{0}'$).
Exchanging the roles of $x_{0}$ and $x_{0}'$ and 
adding the resulting inequality with \eqref{eq:9:4:3}, we deduce that 
\begin{equation}
\label{eq:9:4:6}
2 \lambda \EE \int_{t_{0}}^T \vert 
\hat{\alpha}_t^{t_{0},x_{0}}
- \hat{\alpha}_t^{t_{0},x_{0}'} \vert^2 dt \leq 
\langle x_{0}'-x_{0},Y_{t_{0}}^{t_{0},x_{0}'} - Y_{t_{0}}^{t_{0},x_{0}} \rangle.
\end{equation}
Moreover, by standard SDE estimates first and then by standard BSDE estimates, there exists a constant $c$ (the value of which may vary from line to line), independent of $t_{0}$ and $\delta$, such that 
$$
{\mathbb E} \bigl[ \sup_{t_{0} \leq t \leq T} \vert X_{t}^{t_{0},x_{0}} - X_{t}^{t_{0},x_{0}'} \vert^2 \bigr]
+ {\mathbb E} \bigl[ \sup_{t_{0} \leq t \leq T} \vert Y_{t}^{t_{0},x_{0}} - Y_{t}^{t_{0},x_{0}'} \vert^2 \bigr]
\leq c \EE \int_{t_{0}}^T \vert 
\hat{\alpha}_t^{t_{0},x_{0}}
- \hat{\alpha}_t^{t_{0},x_{0}'} \vert^2 dt.
$$
Plugging \eqref{eq:9:4:6} into the above inequality completes the proof of \eqref{eq:9:4:2}. 

The function $u^{\mu}$ is then defined
as $u^{\mu} : [0,T] \times \RR^d \ni (t,x) \hookrightarrow Y_{t}^{t,x}$. The representation property of $Y$ in terms of $X$ directly follows from \cite{Delarue02}. Local boundedness of $u^{\mu}$ follows from the Lipschitz continuity in the variable $x$ together with the obvious inequality:
\vskip 0pt
$\sup_{0 \leq t \leq T} \vert u^{\mu}(t,0) \vert \leq 
\sup_{0 \leq t \leq T} \biggl[
\EE \bigl[ \vert u^{\mu}(t,X_{t}^{0,0}) - u^{\mu}(t,0) \vert \bigr] + \EE \bigl[ \vert Y_{t}^{0,0}\vert \bigr] \biggr] < + \infty$. \end{proof}

\vskip 2pt\noindent
We now set 
 
\begin{definition} 
\label{def:4:12:1}
To each $\mu \in {\mathcal P}_{2}({\mathcal C}([0,T];\RR^d))$ with marginal distributions 
 $(\mu_{t})_{0 \leq t \leq T}$,  
we  associate the measure $\PP_{X^{x_{0};\mu}}$ where $X^{x_{0};\mu}$ is the solution 
of \eqref{eq:4:4:6} with initial condition $x_{0}$. The resulting mapping 
${\mathcal P}_{2}\bigl( {\mathcal C}([0,T];\RR^d) \bigr) \ni \mu
\hookrightarrow \PP_{X^{x_{0};\mu}} \in {\mathcal P}_{2}\bigl( {\mathcal C}([0,T];\RR^d) \bigr)$
is denoted by $\Phi$ and we call solution of the matching problem (iii) in \eqref{fo:mfgcontrolpb} any fixed point $\mu$ of $\Phi$. For such 
a fixed point $\mu$, $X^{x_{0};\mu}$ satisfies \eqref{fo:mfFBSDE}.
\end{definition}

Definition \ref{def:4:12:1} captures the essence of the approach of Lasry and Lions who freeze the probability measure at the optimal value when optimizing the cost. This is not the case in the study of the control of McKean-Vlasov dynamics, as investigated in \cite{CarmonaDelarue3}: in this different setting, optimization is also performed with respect to the measure argument. See also \cite{CarmonaDelarueLachapelle} and \cite{Bensoussanetal} for the linear quadratic case. 

\subsection{Existence under Additional Boundedness Conditions}
We first prove existence under an extra boundedness assumption.
\begin{proposition}
\label{prop:17:3:1}
The system \eqref{fo:mfFBSDE} is solvable if, in addition to (A.1--7), we also assume that $\partial_{x} f$ and $\partial_{x} g$ are uniformly bounded, i.e. for some constant $c_{B}>0$
\begin{equation}
\label{eq:6:5:1}
\forall t \in [0,T], \ x \in \RR^d, \ \mu \in {\mathcal P}_{2}(\RR^d), \ \alpha \in \RR^k, \quad 
\vert \partial_{x} g(x,\mu) \vert, \ \vert \partial_{x} f(t,x,\mu,\alpha) \vert
\leq c_{B}.
\end{equation}
\end{proposition}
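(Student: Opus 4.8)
The plan is to prove existence for the system \eqref{fo:mfFBSDE} under the extra boundedness condition \eqref{eq:6:5:1} by a Schauder fixed point argument for the map $\Phi$ of Definition \ref{def:4:12:1}. First I would show that $\Phi$ maps a suitable closed convex subset of ${\mathcal P}_{2}({\mathcal C}([0,T];\RR^d))$ into itself. The natural candidate is a set of measures $\mu$ whose time-marginals have uniformly bounded second moments and whose canonical process satisfies a uniform modulus-of-continuity estimate (for instance a bound on $\EE[\vert X_{t}-X_{s}\vert^4]$ of order $\vert t-s\vert^2$, which one obtains from the $\sigma dW_t$ term together with the bounded drift). To produce these bounds I would use Lemma \ref{lem:unq:frozen}, which guarantees that for each frozen $\mu$ the FBSDE \eqref{eq:4:4:6} has a unique solution $(X^{x_0;\mu},Y^{x_0;\mu},Z^{x_0;\mu})$ with $Y^{x_0;\mu}_t=u^\mu(t,X^{x_0;\mu}_t)$ and $u^\mu$ Lipschitz in $x$; under the extra bound \eqref{eq:6:5:1} the backward equation has a bounded driver with bounded terminal condition $\partial_x g$, so the standard BSDE estimate forces $\sup_{t}\vert Y^{x_0;\mu}_t\vert$ to be bounded by a constant $c_B$-dependent quantity \emph{independent of $\mu$}. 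Feeding this uniform bound on $\hat\alpha$ (through \eqref{eq:3:5:1}) back into the forward SDE yields, via Gronwall and the boundedness of $b_0,b_1,b_2$ in (A.5), uniform second-moment and flow-regularity bounds on $X^{x_0;\mu}$, hence on $\PP_{X^{x_0;\mu}}=\Phi(\mu)$.

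Second, I would establish that this stabilizing subset is relatively compact for $W_2$ on path space. The moment bound controls the mass at infinity (tightness of marginals via Markov's inequality and uniform integrability of the square), while the modulus-of-continuity estimate gives tightness on ${\mathcal C}([0,T];\RR^d)$ through the Kolmogorov--Chentsov/Arzel\`a--Ascoli criterion; together with uniform square-integrability this upgrades weak relative compactness to relative compactness in ${\mathcal P}_2({\mathcal C}([0,T];\RR^d))$. Convexity of the chosen set is immediate since all the defining constraints are given by convex (in fact linear or moment-type) inequalities on $\mu$.

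Third, I would prove continuity of $\Phi$ on this set. Given $\mu^n\to\mu$ in $W_2$, I would compare the solutions $(X^{x_0;\mu^n},Y^{x_0;\mu^n})$ and $(X^{x_0;\mu},Y^{x_0;\mu})$ using standard stability estimates for the coupled FBSDE on the small time intervals $[T-\delta,T],[T-2\delta,T-\delta],\dots$ furnished by Lemma \ref{lem:unq:frozen}. The coefficients depend on $\mu$ only through $b_0(t,\mu_t)$, through $\PP_{X_t}$ inside $f$ and $\partial_x f$, and through $\partial_x g(\cdot,\mu_T)$; by (A.5) and (A.4) these are $W_2$-Lipschitz (respectively continuous) in the measure argument, so the difference of the two solutions is controlled by $\int_0^T W_2(\mu^n_t,\mu_t)^2\,dt$, which tends to $0$. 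This yields $W_2(\Phi(\mu^n),\Phi(\mu))\to 0$. Applying Schauder's theorem then produces a fixed point $\mu=\Phi(\mu)$, and by Definition \ref{def:4:12:1} the associated $X^{x_0;\mu}$ solves \eqref{fo:mfFBSDE}.

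The main obstacle I expect is the \emph{uniform} control of the solutions across the whole interval $[0,T]$ rather than on a single small interval. Lemma \ref{lem:unq:frozen} only yields a decoupling field on sub-intervals of length $\delta$, and the invariant-set bounds must not degrade as these pieces are patched together. The key simplification here is precisely the extra assumption \eqref{eq:6:5:1}: boundedness of $\partial_x f$ and $\partial_x g$ makes the backward driver and terminal data bounded, so the adjoint process $Y$ (and hence $\hat\alpha$ via \eqref{eq:3:5:1}) admits an a priori sup-norm bound depending only on the data, uniformly in $\mu$ and in $x_0$. This is exactly what decouples the a priori estimate on the forward equation from the fixed-point iteration and keeps the invariant set from blowing up; removing \eqref{eq:6:5:1} is what forces the more delicate weak mean-reverting analysis of (A.7) in the subsequent, harder stages of the proof of Theorem \ref{prop:27:11:1}.
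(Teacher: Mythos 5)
Your first two steps (the invariant set via the uniform bound on $Y$ coming from the bounded terminal condition $\partial_{x}g$ and the bounded driver, then the uniform bound on $\hat{\alpha}$ through \eqref{eq:3:5:1}, the moment and tightness estimates for the forward process, and relative compactness of the image of $\Phi$ in Wasserstein distance on path space) follow the paper's argument essentially verbatim; the only cosmetic difference is that the paper works with the Kantorovich--Rubinstein norm, i.e.\ $W_{1}$, on a set defined by a uniform fourth-moment bound, rather than with $W_{2}$ and an explicit modulus-of-continuity constraint.

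The gap is in your third step, the continuity of $\Phi$. You propose to compare $(X^{x_{0};\mu^n},Y^{x_{0};\mu^n})$ with $(X^{x_{0};\mu},Y^{x_{0};\mu})$ by ``standard stability estimates for the coupled FBSDE,'' asserting that the coefficients are $W_{2}$-Lipschitz (or continuous) in the measure argument ``by (A.5) and (A.4).'' But the coefficients of the FBSDE \eqref{eq:4:4:6} involve $\partial_{x}f$, $\partial_{x}g$ and the minimizer $\hat{\alpha}$ (which is defined through $\partial_{\alpha}f$), and the standing assumptions give \emph{no} regularity of these gradients with respect to $\mu$: (A.5) only controls the $W_{2}$-dependence of $f$, $g$ and $b_{0}$ themselves, (A.2) and (A.4) give $C^{1,1}$-regularity in $(x,\alpha)$ for each \emph{fixed} $\mu$, and Lemma \ref{le:minimizer} gives Lipschitz continuity of $\hat{\alpha}$ in $(x,y)$ uniformly in $\mu$ but says nothing about continuity in $\mu$. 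So the FBSDE stability estimate you invoke has no hypothesis to run on, and the claimed bound by $\int_{0}^{T}W_{2}^{2}(\mu^n_{t},\mu_{t})\,dt$ does not follow. This is precisely why the paper does not argue at the level of the FBSDE coefficients: it proves continuity of $\Phi$ through the cost functionals, using the sharpened stochastic maximum principle of Proposition \ref{prop:SMP} together with the optimality of $\hat{\alpha}$ and $\hat{\alpha}'$ to get
$\lambda\,\EE\int_{0}^{T}\vert\hat{\alpha}'_{t}-\hat{\alpha}_{t}\vert^{2}dt$
bounded by differences of costs $J(\cdot\,;\mu)-J(\cdot\,;\mu')$ and by $\EE\int_{0}^{T}\langle b_{0}(t,\mu'_{t})-b_{0}(t,\mu_{t}),Y_{t}\rangle dt$, all of which involve only $f$, $g$, $b_{0}$ and hence only the measure-regularity actually assumed in (A.5). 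To repair your argument you would either have to add this convexity/duality step, or strengthen the hypotheses to include $W_{2}$-Lipschitz continuity of $\partial_{x}f$, $\partial_{\alpha}f$ and $\partial_{x}g$ in $\mu$, which is not part of (A.1--7).
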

Notice that \eqref{eq:6:5:1} implies (A.7).

\begin{proof} 
We apply Schauder's fixed point theorem in the space 
${\mathcal M}_{1}({\mathcal C}([0,T];\RR^d))$ of finite signed measure 
$\nu$ of order $1$ on ${\mathcal C}([0,T];\RR^d)$ endowed with the Kantorovich-Rubinstein norm:
$$
\| \nu \|_{\rm KR} = \sup \biggl\{ \biggl\vert \int_{{\mathcal C}([0,T];\RR^d)} F(w) d\nu(w) \biggr\vert \ ; \ F \in {\rm Lip}_{1}\bigl({\mathcal C}([0,T];\RR^d)\bigr) \biggr\},
$$
for $\nu \in {\mathcal M}_{1}({\mathcal C}([0,T];\RR^d))$, which is known to coincide with the Wasserstein distance $W_1$ on ${\mathcal P}_{1}({\mathcal C}([0,T];\RR^d))$.
In what follows, we prove existence by proving that there exists a closed convex subset ${\mathcal E}
\subset {\mathcal P}_{2}({\mathcal C}([0,T];\RR^d)) \subset 
{\mathcal M}_{1}({\mathcal C}([0,T];\RR^d))$ which is stable for $\Phi$, with a relatively compact range, $\Phi$ being continuous on ${\mathcal E}$. 

\textit{First Step.} We first establish several a priori estimates for the solution of \eqref{eq:4:4:6}. The coefficients $\partial_{x} f$ and $\partial_{x} g$ being bounded, the terminal condition in \eqref{eq:4:4:6} is bounded and the growth of the driver is of  the form:
$$
\vert \partial_{x} H\bigl(t,x,\mu_{t},y,\hat{\alpha}(t,x,\mu_{t},y)\bigr) \vert \leq 
c_{B} + c_{L} \vert y \vert.
$$
By standard BSDE estimates relying on Gronwall's lemma, this implies that there exists a constant $c$, only depending upon 
$c_{B}$, $c_{L}$ and $T$, such that, for any $\mu \in {\mathcal P}_{2}({\mathcal C}([0,T];\RR^d))$,  
\begin{equation}
\label{eq:10:4:5}
\forall t \in [0,T], \quad \vert Y_{t}^{x_{0};\mu} \vert \leq c
\end{equation}
holds $\PP$-almost surely. By \eqref{eq:3:5:1} in the proof of Lemma \ref{le:minimizer} and by (A.6), we deduce that (the value of $c$ possibly varying from line to line) 
\begin{equation}
\label{eq:10:4:2}
\forall t \in [0,T], \quad \hat{\alpha}\bigl(t,X_{t}^{x_{0};\mu},\mu_{t},Y_{t}^{x_{0};\mu}\bigr)
\leq c. 
\end{equation}
Plugging this bound into the forward part of \eqref{eq:4:4:6}, standard $L^p$ estimates 
for SDEs imply that there exists a constant $c'$, only depending upon $c_{B}$, $c_{L}$ and $T$, such that  
\begin{equation}
\label{eq:4:4:10}
{\mathbb E} \bigl[ \sup_{0 \leq t \leq T}
\vert X_{t}^{x_{0};\mu} \vert^4 \bigr]
\leq c'.
\end{equation}
We consider the restriction of $\Phi$ to the subset  ${\mathcal E}$ of probability measures of order $4$ whose fourth moment is not greater than $c'$, i.e.
$${\mathcal E} = \bigl\{ \mu \in {\mathcal P}_{4}\bigl({\mathcal C} ([0,T],\RR^d)\bigr)   : 
M_{4,{\mathcal C} ([0,T],\RR^d)} (\mu) \leq c' \bigr\},$$
${\mathcal E}$ is convex and closed for the $1$-Wasserstein distance and $\Phi$ maps ${\mathcal E}$ into itself. 

\textit{Second Step.} The family of processes 
$((X_{t}^{x_{0};\mu})_{0 \leq t \leq T})_{\mu \in {\mathcal E}}$ is tight in ${\mathcal C}([0,T];\RR^d)$, as a consequence of 
\eqref{eq:10:4:2} and \eqref{eq:4:4:10}.
By \eqref{eq:4:4:10} again, $\Phi({\mathcal E})$ is actually relatively compact for the $1$-Wasserstein distance on 
${\mathcal C}([0,T];\RR^d)$. Indeed, tightness says that it is relatively compact for the topology of weak convergence of measures
and \eqref{eq:4:4:10} says that any weakly convergent sequence  $(\PP_{X^{x_{0};\mu_{n}}})_{n \geq 1}$, with 
$\mu_{n} \in {\mathcal E}$ for any $n \geq 1$, is convergent for the $1$-Wasserstein distance.

\textit{Third Step.} We finally check that $\Phi$ is continuous on ${\mathcal E}$. Given another measure $\mu' \in {\mathcal E}$, we deduce from \eqref{eq:9:4:1} in Proposition \ref{prop:SMP} that:
\begin{equation}
\label{eq:10:4:1}
J\bigl( \hat{\alpha}; \mu \bigr)
+  \lambda \EE \int_{0}^T \vert \hat{\alpha}_{t}'
-\hat{\alpha}_{t}
\vert^2 dt 
\leq
J \bigl( \bigl[ \hat{\alpha}',\mu' \bigr] ; \mu \bigr) + 
\EE \int_{0}^T \langle b_{0}(t,\mu_{t}') - b_{0}(t,\mu_{t}),Y_{t} \rangle dt,
\end{equation}
where
$
\hat{\alpha}_{t} = \hat{\alpha}(t,X_{t}^{x_{0};\mu},\mu_{t},Y_{t}^{x_{0};\mu})$, for
$t \in [0,T]$, with a similar definition for $\hat{\alpha}_{t}'$ by replacing $\mu$ by $\mu'$.
By optimality of $\hat{\alpha}'$ for the cost functional $J(\cdot;\mu')$, we claim:
\begin{equation*}
J \bigl( \bigl[\hat{\alpha}',\mu'\bigr] ; \mu \bigr) 
\leq J \bigl( \hat{\alpha} ; \mu'\bigr) + 
J \bigl( \bigl[\hat{\alpha}',\mu' \bigr]  ; \mu \bigr) - J \bigl( \hat{\alpha}' ; \mu' \bigr),
\end{equation*}
so that \eqref{eq:10:4:1} yields
\begin{equation}
\label{eq:3:5:2}
\begin{split}
\lambda \EE \int_{0}^T \vert \hat{\alpha}_{t}'
-\hat{\alpha}_{t}
\vert^2 dt 
&\leq J \bigl( \hat{\alpha} ; \mu'\bigr) - J \bigl( \hat{\alpha} ; \mu \bigr) 
+
J \bigl( \bigl[\hat{\alpha}',\mu' \bigr]  ; \mu \bigr) - J \bigl( \hat{\alpha}' ; \mu' \bigr)
\\
&\hspace{15pt}
+ 
\EE \int_{0}^T \langle b_{0}(t,\mu_{t}') - b_{0}(t,\mu_{t}),Y_{t} \rangle dt.
\end{split}
\end{equation}
We now compare $J( \hat{\alpha} ;\mu')$ with $J( \hat{\alpha} ; \mu)$
(and similarly $J(\hat{\alpha}' ; \mu')$ with $J( [\hat{\alpha}',\mu'] ; \mu)$). We notice that 
$J( \hat{\alpha} ;\mu)$ is the cost associated with the flow of measures $(\mu_{t})_{0 \leq t \leq T}$ and the diffusion process $X^{x_{0},\mu}$ whereas 
$J( \hat{\alpha} ;\mu)$ is the cost associated with the flow of measures $(\mu_{t}')_{0 \leq t \leq T}$ and the controlled diffusion process $U$ satisfying
$$
dU_t=\bigl[ b_{0}(t,\mu_t') + b_{1}(t) U_{t} + b_{2}(t) \hat{\alpha}_{t} \bigr] dt + \sigma dW_t, \quad t \in [0,T]; \quad U_{0}=x_{0}.
$$
By Gronwall's lemma, there exists a constant $c$ such that 
$$
\EE \bigl[ \sup_{0 \leq t \leq T} \vert X_{t}^{x_{0},\mu} - U_{t} \vert^2 \bigr] \leq c \int_{0}^T W_{2}^2(\mu_{t},\mu_{t}') dt. 
$$
Since $\mu$ and $\mu'$ are in ${\mathcal E}$, we deduce from 
(A.5), \eqref{eq:10:4:2} and \eqref{eq:4:4:10} that 
$$
J \bigl( \hat{\alpha} ; \mu' \bigr) - J \bigl( \hat{\alpha} ; \mu \bigr)
\leq c \biggl( \int_{0}^T W_{2}^2 (\mu_{t},\mu_{t}') dt \biggr)^{1/2},
$$
with a similar bound for $J([\hat{\alpha}',\mu'] ; \mu) - J ( \hat{\alpha}' ; \mu')$ (the argument is even simpler as the costs are driven by the same processes),
so that, from \eqref{eq:3:5:2} and \eqref{eq:10:4:5} again, together with Gronwall's lemma to go back to the controlled SDEs,
$$
\EE \int_{0}^T \vert \hat{\alpha}_{t}'
-\hat{\alpha}_{t}
\vert^2 dt +
\EE \bigl[ \sup_{0 \leq t \leq T} \vert X_{t}^{x_{0};\mu} -
X_{t}^{x_{0};\mu'} \vert^2 \bigr] \leq  c \biggl( \int_{0}^T W_{2}^2(\mu_{t},\mu_{t}') dt \biggr)^{1/2}. 
$$
As probability measures in ${\mathcal E}$ have bounded moments of order 4, Cauchy-Schwartz inequality yields
(keep in mind that $W_1(\Phi(\mu),\Phi(\mu'))
\leq \EE [ \sup_{0 \leq t \leq T} \vert X_{t}^{x_{0};\mu} -
X_{t}^{x_{0};\mu'} \vert]$):
\begin{equation*}
W_1(\Phi(\mu),\Phi(\mu'))
\leq  c \biggl( \int_{0}^T W_{2}^2(\mu_{t},\mu_{t}') dt \biggr)^{1/4}
\leq c\biggl( \int_{0}^T W_{1}^{1/2}(\mu_{t},\mu_{t}') dt \biggr)^{1/4},
\end{equation*}
which shows that $\Phi$ is continuous on ${\mathcal E}$ with respect to the $1$-Wasserstein distance $W_1$  on 
${\mathcal P}_{1}({\mathcal C}([0,T];\RR^d))$. 
\end{proof}

\subsection{Approximation Procedure}
Examples of functions $f$ and $g$ which are convex in $x$ and such that
$\partial_{x} f$ and $\partial_{x}g$ are bounded are rather limited in number and scope. For instance, boundedness of $\partial_{x} f$ and
$\partial_{x} g$ fails in the typical case when $f$ and $g$ are quadratic with respect to $x$.  In order to overcome this limitation, we propose to approximate the cost functions $f$ and $g$ by two sequences $(f^n)_{n \geq 1}$ and $(g^n)_{n \geq 1}$, referred to as approximated cost functions, satisfying (A.1--7) uniformly with respect to $n \geq 1$, and such that, for any $n \geq 1$, 
equation \eqref{fo:mfFBSDE}, with $(\partial_{x} f,\partial_{x} g)$ replaced by $(\partial_{x} f^n,\partial_{x} g^n)$, has a solution $(X^n,Y^n,Z^n)$. In this framework, Proposition \ref{prop:17:3:1} says that such approximated FBSDEs are indeed solvable
when $\partial_{x} f^n$ and $\partial_{x} g^n$ are bounded for any $n \geq 1$.
Our approximation procedure relies on the following:

\begin{lemma}
\label{lem:5:5:1}
If there exist two sequences $(f^n)_{n \geq 1}$ and $(g^n)_{n \geq 1}$ such that

$(i)$ there exist two parameters $c_{L}'$ and $\lambda' >0$ such that,  for any $n \geq 1$, $f^n$ and $g^n$ satisfy (A.1--7) with respect to $\lambda'$ and $c_{L}'$; 

$(ii)$ $f^n$ (resp. $g^n$)  converges towards $f$ (resp. $g$) uniformly on any bounded subset of 
$[0,T] \times \RR^d \times {\mathcal P}_{2}(\RR^d) \times \RR^k$ (resp. $\RR^d \times {\mathcal P}_{2}(\RR^d)$); 

$(iii)$ for any $n \geq 1$, equation \eqref{fo:mfFBSDE}, with $(\partial_{x} f,\partial_{x} g)$ replaced by $(\partial_{x} f^n,\partial_{x} g^n)$, has a solution which we denote by $(X^n,Y^n,Z^n)$.
\vskip 1pt\noindent
Then,  equation \eqref{fo:mfFBSDE} is solvable. 
\end{lemma}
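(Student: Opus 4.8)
The plan is to prove solvability of \eqref{fo:mfFBSDE} by passing to the limit in the approximating solutions $(X^n,Y^n,Z^n)_{n\ge1}$ furnished by hypothesis $(iii)$. The overall structure mirrors the compactness strategy used in Proposition \ref{prop:17:3:1}: first establish $n$-uniform a priori bounds, then extract a convergent subsequence, and finally identify the limit as a solution of the target FBSDE. The essential point is that, unlike in Proposition \ref{prop:17:3:1}, we no longer assume $\partial_x f$ and $\partial_x g$ are bounded, so the uniform control of $Y^n$ can no longer come from a direct Gronwall estimate on the backward equation. Instead, the $n$-uniformity in $(i)$ together with the weak mean-reverting assumption (A.7) must be leveraged to bound the relevant moments.

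First I would derive $n$-uniform estimates. Because $(f^n,g^n)$ satisfy (A.1--7) with the \emph{same} constants $c_L'$ and $\lambda'$, the representation $Y^n_t=u^{\mu^n}(t,X^n_t)$ from Lemma \ref{lem:unq:frozen} holds with a Lipschitz constant $c$ independent of $n$, where $\mu^n=\PP_{X^n}$. The key a priori bound is a uniform estimate of the form ${\mathbb E}[\sup_{0\le t\le T}(|X^n_t|^2+|Y^n_t|^2)+\int_0^T|Z^n_t|^2dt]\le c$ with $c$ independent of $n$. I expect to obtain the bound on ${\mathbb E}[\sup_t|X^n_t|^2]$ by testing the forward expectation against assumption (A.7): taking expectations in the forward SDE and pairing with the backward terminal condition, the weak mean-reverting inequalities $\langle x,\partial_x f(t,0,\delta_x,0)\rangle\ge -c_L(1+|x|)$ and the analogous one for $g$ control the growth of ${\mathbb E}[X^n_t]$ and hence, via It\^o's formula applied to $\langle X^n_t,Y^n_t\rangle$ and the $\lambda'$-convexity, the full second moment. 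Once $X^n$ is bounded in $L^2$ uniformly, the Lipschitz representation gives the same for $Y^n$, and standard BSDE estimates give it for $Z^n$. The higher moments ${\mathbb E}[\sup_t|X^n_t|^\ell]$ follow by the same argument at order $\ell$.

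Next I would extract limits. The uniform moment bounds yield tightness of the laws $\mu^n=\PP_{X^n}$ in ${\mathcal P}_2({\mathcal C}([0,T];\RR^d))$, so along a subsequence $\mu^n\to\mu$ in $W_2$. Crucially, the $n$-uniform Lipschitz bound on $u^{\mu^n}$ means the candidate value functions form an equi-Lipschitz, equi-bounded family, so by Arzel\`a--Ascoli a further subsequence converges locally uniformly to some Lipschitz $u$ with the growth and Lipschitz properties \eqref{eq:28:3:14}. One then solves the frozen FBSDE \eqref{eq:4:4:6} for the limiting $\mu$ via Lemma \ref{lem:unq:frozen}, obtaining $(X,Y,Z)$, and argues that $\mu$ is a fixed point of $\Phi$, i.e.\ $\PP_X=\mu$, so that by Definition \ref{def:4:12:1} the triple $(X,Y,Z)$ solves \eqref{fo:mfFBSDE}. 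The convergence $f^n\to f$, $g^n\to g$ uniformly on bounded sets from $(ii)$, combined with the uniform moment bounds (which confine the processes to bounded sets up to small-probability tails), lets one pass to the limit in the coefficients, and the $W_2$-continuity of $\Phi$ established essentially as in the third step of Proposition \ref{prop:17:3:1} identifies the fixed point.

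The main obstacle will be the $n$-uniform a priori bound on the second moment of $X^n$, and through it on $Y^n$. Without boundedness of $\partial_x f^n$ and $\partial_x g^n$, there is genuine feedback: $Y^n$ drives the control $\hat\alpha$ which drives $X^n$, whose law feeds back into the coefficients. Closing this loop requires the convexity parameter $\lambda'$ and the weak mean-reverting condition (A.7) to cooperate, and the delicate part is ensuring the resulting Gronwall-type inequality has a constant independent of $n$ despite the linear (rather than bounded) growth of the gradients. I would isolate this estimate as the technical heart of the argument and expect the remaining compactness and identification steps to be comparatively routine refinements of the machinery already developed in Lemma \ref{lem:unq:frozen} and Proposition \ref{prop:17:3:1}.
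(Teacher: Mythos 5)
Your overall architecture (uniform a priori bounds, tightness, extraction, identification of the limit) matches the paper's, and you correctly locate the crux in the $n$-uniform second-moment bound on $X^n$. But the mechanism you propose for that bound — taking expectations in the forward SDE, pairing with the terminal condition, and expanding $\langle X^n_t,Y^n_t\rangle$ by It\^o's formula — has a genuine gap. Assumption (A.7) is a sign condition only at Dirac masses: it controls $\langle x,\partial_x f(t,0,\delta_x,0)\rangle$ and $\langle x,\partial_x g(0,\delta_x)\rangle$, not $\langle X^n_t,\partial_x f^n(t,X^n_t,\PP_{X^n_t},\hat\alpha^n_t)\rangle$, which is what the duality expansion actually produces. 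To use (A.7) you must first replace the law $\PP_{X^n_t}$ by $\delta_{\EE(X^n_t)}$, and that costs you $W_2(\PP_{X^n_t},\delta_{\EE(X^n_t)})=\mathrm{Var}(X^n_t)^{1/2}$, which is itself part of what you are trying to bound. Your sketch never explains how this loop closes, and a naive Gronwall on the coupled system does not close it, precisely because $\partial_x f^n$ and $\partial_x g^n$ now have linear growth.

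The paper's resolution is a two-fold application of the sufficiency form of the stochastic maximum principle (Theorem \ref{thm:SMP}) with two carefully chosen reference controls. First, comparing the optimal state $X^n$ with the state driven by $\beta^n_s=\EE(\hat\alpha^n_s)$ and applying Jensen's inequality to the convex costs yields
$\int_0^T\mathrm{Var}(\hat\alpha^n_s)\,ds\le c\bigl[1+\sup_s\EE[|X^n_s|^2]^{1/2}+(\EE\int_0^T|\hat\alpha^n_s|^2ds)^{1/2}\bigr]$,
hence $\sup_s\mathrm{Var}(X^n_s)\le c[1+(\EE\int_0^T|\hat\alpha^n_s|^2ds)^{1/2}]$ — crucially with a \emph{square root} on the right. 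Second, comparing with the null control $\beta^n\equiv 0$, using (A.5)--(A.6) and the variance bound to pass from $\PP_{X^n_s}$ to $\delta_{\EE(X^n_s)}$ in the cost, and only then invoking (A.7), Young's inequality absorbs the square-root terms and gives $\EE\int_0^T|\hat\alpha^n_s|^2ds\le c$ uniformly in $n$, whence $\EE[\sup_t|X^n_t|^2]\le c$. This interplay — variance control via the mean control, then (A.7) at the Dirac mass of the mean, then Young — is the missing idea in your proposal. A secondary issue: in your identification step you invoke the $W_2$-continuity of $\Phi$ "as in the third step of Proposition \ref{prop:17:3:1}", but that argument uses the almost-sure bound $|Y_t|\le c$ available only under the extra boundedness hypothesis; here the paper instead reruns the variational inequality \eqref{eq:9:4:1} with the extra terms $J^n-J$ coming from the approximation of the costs, and uses the uniform higher moments plus the local uniform convergence $f^n\to f$, $g^n\to g$ to send those terms to zero and conclude $\hat\alpha^{n_p}\to\hat\alpha$ in $L^2$.
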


\begin{proof} We establish tightness of the processes $(X^n)_{n \geq 1}$ in order to extract a convergent subsequence. For any $n \geq 1$, we consider the approximated Hamiltonian
\begin{equation*}
H^{n}(t,x,\mu,y,\alpha) = \langle b(t,x,\mu,\alpha), y \rangle + f^n(t,x,\mu,\alpha),
\end{equation*}
together with its minimizer $\hat{\alpha}^n(t,x,\mu,y) = \textrm{argmin}_{\alpha} H^{n}(t,x,\mu,y,\alpha)$. 
Setting $\hat{\alpha}^n_{t} = \hat{\alpha}^n(t,X_{t}^n,\PP_{X^n_{t}},Y_{t}^n)$ for any 
$t \in [0,T]$ and $n \geq 1$, our first step will be to prove that 
\begin{equation}
\label{eq:17:3:6}
\sup_{n \geq 1} \EE \biggl[ \int_{0}^T \vert \hat{\alpha}^n_{s} \vert^2 ds \biggr] < + \infty. 
\end{equation}
Since $X^n$ is the diffusion process controlled by $(\hat{\alpha}^n_{t} )_{0 \leq t \leq T}$, we use Theorem \ref{thm:SMP} to 
compare its behavior to the behavior of a \textit{reference controlled process} $U^n$ 
whose dynamics are driven by a specific control $\beta^n$. We shall consider two different versions for $U^n$
corresponding to the following choices for $\beta^n$:
\begin{equation}
\label{eq:17:3:10}
(i) \  \beta^n_{s} = \EE(\hat{\alpha}_{s}^n)\quad\mbox{for}\;0 \leq s \leq T; \quad  
(ii) \ \beta^n \equiv 0. 
\end{equation}
For each of these controls, we compare the cost to the optimal cost by using the version of the stochastic maximum principle 
which we proved earlier, and subsequently, 
derive useful information on the optimal control $(\hat{\alpha}_{s}^n)_{0 \leq s \leq T}$. 

\textit{First Step.}
We first consider $(i)$ in \eqref{eq:17:3:10}. In this case
\begin{equation}
\label{fo:unoft}
U_t^n = x_{0} + \int_{0}^t \bigl[ b_{0}(s,\PP_{X^n_{s}}) + b_{1}(s) U_{s}^n + b_{2}(s) \EE(\hat\alpha^n_{s}) \bigr] ds
+ \sigma W_{t}, \quad t \in [0,T].
\end{equation}
Notice that taking expectations on both sides of \eqref{fo:unoft} shows that $\EE(U^n_{s}) = {\mathbb E}(X^n_{s})$, for $0 \leq s \leq T$, and that 
$$
\bigl[U_t^n-\EE(U_t^n)\bigr] =  \int_{0}^t  b_{1}(s) \bigl[U_{s}^n  -\EE (U_{s}^n) \bigr]  ds+ \sigma W_{t}, \quad t \in [0,T],
$$
from which it easily follows that $\sup_{n \geq 1} \sup_{0 \leq s \leq T} \textrm{Var}(U^n_{s}) < + \infty$. 

By Theorem \ref{thm:SMP}, with 
$g^n(\cdot,\PP_{X^n_{T}})$ as terminal cost and $(f^n(t,\cdot,\PP_{X^n_{t}},\cdot))_{0 \leq t \leq T}$ as running cost, we get
\begin{equation}
\label{eq:19:3:1}
\begin{split}
&\EE \bigl[ g^n\bigl(X^n_{T},\PP_{X^n_{T}}\bigr) \bigr] + \EE \int_{0}^T 
\bigl[ \lambda' \vert \hat{\alpha}_{s}^n - \beta_{s}^n \vert^2 + 
f^n \bigl(s,X_{s}^n,\PP_{X_{s}^n},\hat{\alpha}_{s}^n \bigr) \bigr] 
ds
\\
&\hspace{15pt}
\leq \EE \biggl[ g^n\bigl(U^n_{T},\PP_{X^n_{T}}\bigr) +  \int_{0}^T f^n \bigl(s,U_{s}^n,\PP_{X_{s}^n},\beta_{s}^n 
\bigr) 
ds \biggr].
\end{split}
\end{equation}
Using the fact that $\beta^n_{s} = \EE(\hat{\alpha}_{s}^n)$, the convexity condition in (A.2,4) and Jensen's inequality, we obtain:
\begin{equation}
\label{eq:19:3:5}
\begin{split}
&g^n\bigl(\EE(X^n_{T}),\PP_{X^n_{T}}\bigr) + \int_{0}^T 
\bigl[ \lambda'  \textrm{Var} (  \hat{\alpha}_{s}^n ) +
f^n \bigl(s,\EE(X_{s}^n),\PP_{X_{s}^n},\EE(\hat{\alpha}_{s}^n) \bigr) \bigr] 
ds 
\\
&\hspace{15pt}
\leq \EE \biggl[ g^n\bigl(U^n_{T},\PP_{X^n_{T}}\bigr) +  \int_{0}^T f^n \bigl(s,U_{s}^n,\PP_{X_{s}^n},\EE(\hat{\alpha}_{s}^n) \bigr) 
ds \biggr].
\end{split}
\end{equation}
By (A.5), we deduce that there exists a constant $c$, depending only on $\lambda$, $c_{L}$, $x_{0}$ and $T$, such that (the actual value of $c$ possibly varying from line to line)
\begin{equation*}
\begin{split}
&\int_{0}^T \textrm{Var} (  \hat{\alpha}_{s}^n ) ds 
\leq  c\bigl( 1+ \EE \bigl[ \vert U^n_{T} \vert^2 \bigr]^{1/2} + \EE \bigl[ \vert X^n_{T} \vert^2 \bigr]^{1/2}\bigr)
\EE \bigl[ \vert U^n_{T} - {\mathbb E}(X_{T}^n) \vert^2 \bigr]^{1/2} 
\\
&\hspace{15pt} + c \int_{0}^T \bigl( 1+ \EE \bigl[ \vert U^n_{s} \vert^2 \bigr]^{1/2} + \EE \bigl[ \vert X^n_{s} \vert^2 \bigr]^{1/2} +  \EE \bigl[ \vert \hat{\alpha}^n_{s} \vert^2 \bigr]^{1/2}\bigr)
\EE \bigl[ \vert U^n_{s} - {\mathbb E}(X_{s}^n) \vert^2 \bigr]^{1/2} ds.
\end{split}
\end{equation*}
Since ${\mathbb E}(X^n_{t})={\mathbb E}(U^n_{t})$ for any $t \in [0,T]$, we deduce from the uniform boundedness of the variance of $(U^n_{s})_{0 \leq s \leq T}$ that
\begin{equation}
\label{eq:19:3:2}
\int_{0}^T \textrm{Var} (  \hat{\alpha}_{s}^n ) ds 
\leq  c\biggl[ 1+ \sup_{0 \leq s \leq T} \EE[\vert X^n_{s} \vert^2]^{1/2} + \biggl( \EE \int_{0}^T \vert \hat{\alpha}_{s}^n \vert^2
ds \biggr)^{1/2}\biggr].
\end{equation}
From this, the linearity of the dynamics of $X^n$ and Gronwall's inequality, we deduce:
\begin{equation}
\label{eq:19:3:3}
\sup_{0 \leq s \leq T} \textrm{Var}(X^n_{s}) \leq c\biggl[ 1+ 
\biggl( \EE \int_{0}^T \vert \hat{\alpha}_{s}^n \vert^2
ds \biggr)^{1/2}
\biggr], 
\end{equation}
since
\begin{equation}
\label{eq:6:5:2}
\sup_{0 \leq s \leq T} \EE \bigl[ \vert X^n_{s} \vert^2 \bigr] \leq c\biggl[ 1+ 
\EE \int_{0}^T \vert \hat{\alpha}_{s}^n \vert^2
ds \biggr].
\end{equation}
Bounds like \eqref{eq:19:3:3} allow us to control for any $0 \leq s \leq T$, the Wasserstein distance between the distribution of 
$X^n_{s}$ and the Dirac mass at the point $\EE(X^n_{s})$. 
\vspace{4pt}

\textit{Second Step.}
We now compare $X^n$ to the process controlled by the null control. So we consider case 
$(ii)$ in \eqref{eq:17:3:10}, and now 
$$
U_t^n = x_{0} + \int_{0}^t \bigl[ b_{0}(s,\PP_{X^n_{s}}) + b_{1}(s) U_{s}^n  \bigr] ds
+ \sigma W_{t}, \quad t \in [0,T].
$$
Since no confusion is possible, we still denote the solution by $U^n$ although it is different form the one in the first step.
 By the boundedness of $b_{0}$ in (A.5), it holds $\sup_{n \geq 1} {\mathbb E}[\sup_{0 \leq s \leq T} \vert U^n_{s} \vert^2] < + \infty$. Using Theorem \ref{thm:SMP} as before in the derivation of \eqref{eq:19:3:1} and \eqref{eq:19:3:5}, we get
\begin{equation*}
\label{eq:19:3:5:b}
\begin{split}
&g^n\bigl(\EE(X^n_{T}),\PP_{X^n_{T}}\bigr) + \int_{0}^T 
\bigl[ \lambda'  \EE ( \vert \hat{\alpha}_{s}^n \vert^2) +
f^n \bigl(s,\EE(X_{s}^n),\PP_{X_{s}^n},\EE(\hat{\alpha}_{s}^n) \bigr) \bigr] 
ds 
\\
&\hspace{15pt}
\leq \EE \biggl[ g^n\bigl(U^n_{T},\PP_{X^n_{T}}\bigr) +  \int_{0}^T f^n \bigl(s,U_{s}^n,\PP_{X_{s}^n},0 \bigr) 
ds \biggr].
\end{split}
\end{equation*}
By convexity of $f^n$ with respect to $\alpha$ (see (A.2)) together with (A.6), we have
\begin{equation*}
\label{eq:4:5:2}
\begin{split}
&g^n\bigl(\EE(X^n_{T}),\delta_{\EE(X^n_{T})}\bigr) + \int_{0}^T 
\bigl[ \lambda' \EE \bigl( \vert \hat{\alpha}_{s}^n \vert^2\bigr) +
f^n \bigl(s,\EE(X_{s}^n),\PP_{X_{s}^n},0\bigr) \bigr] 
ds 
\\
&\hspace{15pt}
\leq \EE \biggl[ g^n\bigl(U^n_{T},\PP_{X^n_{T}}\bigr) +  \int_{0}^T f^n \bigl(s,U_{s}^n,\PP_{X_{s}^n},0 \bigr) 
ds \biggr]
  + c {\mathbb E} \int_{0}^T \vert \hat{\alpha}_{s}^n \vert ds
,
\end{split}
\end{equation*}
for some constant $c$,  independent of $n$.
Using (A.5) again, we obtain:
\begin{equation*}
\begin{split}
&g^n\bigl(\EE(X^n_{T}),\delta_{\EE(X^n_{T})}\bigr) + \int_{0}^T 
\bigl[ \lambda' \EE \bigl( \vert \hat{\alpha}_{s}^n \vert^2\bigr) +
f^n \bigl(s,\EE(X_{s}^n),\delta_{\EE(X_{s}^n)},0\bigr) \bigr] 
ds 
\\
&\hspace{15pt}
\leq  g^n\bigl(0,\delta_{\EE(X^n_{T})}\bigr) +  \int_{0}^T f^n \bigl(s,0,\delta_{\EE(X^n_{s})},0\bigr) 
ds  + c {\mathbb E} \int_{0}^T \vert \hat{\alpha}_{s}^n \vert ds
\\
&\hspace{30pt}
 + 
c \bigl( 1+ \sup_{0 \leq s \leq T} \bigl[ \EE\bigl[ \vert X_{s}^n \vert^2 \bigr]^{1/2} \bigr] \bigr)
\bigl( 1 +
 \sup_{0 \leq s \leq T}\bigl[{\rm Var}(X^n_{s})\bigr]^{1/2} \bigr),
\end{split}
\end{equation*}
the value of $c$ possibly varying from line to line. From \eqref{eq:6:5:2}, Young's inequality yields
\begin{equation*}
\begin{split}
&g^n\bigl(\EE(X^n_{T}),\delta_{\EE(X^n_{T})}\bigr) + \int_{0}^T 
\bigl[ \frac{\lambda'}{2} \EE \bigl( \vert \hat{\alpha}_{s}^n \vert^2\bigr) +
f^n \bigl(s,\EE(X_{s}^n),\delta_{\EE(X_{s}^n)},0\bigr) \bigr] 
ds 
\\
&\hspace{15pt}
\leq  g^n\bigl(0,\delta_{\EE(X^n_{T})}\bigr) +  \int_{0}^T f^n \bigl(s,0,\delta_{\EE(X^n_{s})},0\bigr) 
ds  + c \bigl( 1 + \sup_{0 \leq s \leq T}\bigl[{\rm Var}(X^n_{s})\bigr] \bigr).
\end{split}
\end{equation*}
By \eqref{eq:19:3:3}, we obtain
\begin{equation*}
\begin{split}
&g^n\bigl(\EE(X^n_{T}),\delta_{\EE(X^n_{T})}\bigr) + \int_{0}^T 
\bigl[ \frac{\lambda'}{2} \EE \bigl( \vert \hat{\alpha}_{s}^n \vert^2\bigr) +
f^n \bigl(s,\EE(X_{s}^n),\delta_{\EE(X_{s}^n)},0\bigr) \bigr] 
ds 
\\
&\hspace{15pt}
\leq  g^n\bigl(0,\delta_{\EE(X^n_{T})}\bigr) +  \int_{0}^T f^n \bigl(s,0,\delta_{\EE(X^n_{s})},0\bigr) 
ds  + c \biggl( 1 + \biggl[  \int_{0}^T \EE \bigl( \vert \hat{\alpha}_{s}^n \vert^2 \bigr) ds \biggr]^{1/2}\biggr).
\end{split}
\end{equation*}
Young's inequality and the convexity in $x$ of $g^n$ and $f^n$ from  (A.2,4) give:
$$
\big\langle \EE(X^n_{T}) ,\partial_{x} g^n\bigl(0,\delta_{\EE(X^n_{T})}\bigr) \big\rangle + \int_{0}^T 
\bigl[ \frac{\lambda'}{4}  \EE \bigl( \vert \hat{\alpha}_{s}^n \vert^2\bigr) +
\big\langle \EE(X^n_{s}),
\partial_{x} f^n \bigl(s,0,\delta_{\EE(X^n_{s})},0 \bigr) \big\rangle \bigr] 
ds 
\leq c.
$$
By (A.7), we have $\EE \int_{0}^T \vert \hat{\alpha}_{s}^n \vert^2 ds \leq c \bigl( 1 + \sup_{0 \leq s \leq T} {\mathbb E}\bigl[ \vert X_{s}^n \vert^2 \bigr]^{1/2} \bigr)$, and the bound  \eqref{eq:17:3:6} now follows from \eqref{eq:6:5:2}, and as a consequence
\begin{equation}
\label{eq:19:3:10} 
\EE[\sup_{0 \leq s \leq T} \vert X^n_{s} \vert^2] \leq c. 
\end{equation}
Using \eqref{eq:17:3:6} and \eqref{eq:19:3:10}, it is plain to prove that the processes $({X}^n)_{n \geq 1}$ are tight.

\vspace{4pt}

\textit{Third Step.} Let $\mu$ be the limit of a convergent subsequence $(\PP_{X^{n_{p}}})_{p \geq 1}$. By \eqref{eq:19:3:10}, $M_{2,{\mathcal C}([0,T],\RR^d)}(\mu) < + \infty$.
Therefore, by Lemma \ref{lem:unq:frozen}, FBSDE \eqref{eq:4:4:6} has a unique solution  $(X_{t},Y_{t},Z_{t})_{0 \leq t \leq T}$. Moreover, there exists $u : [0,T] \times \RR^d \hookrightarrow \RR^d$, which is $c$-Lipschitz in the variable $x$ for the same constant $c$ as in the statement of the lemma, such that $Y_{t} = u(t,X_{t})$ for any $t \in [0,T]$. In particular, 
\begin{equation}
\label{eq:3:5:4}
\sup_{0 \leq t \leq T} \vert u(t,0) \vert \leq 
\sup_{0 \leq t \leq T} \biggl[
\EE \bigl[ \vert u(t,X_{t}) - u(t,0) \vert \bigr] + \EE \bigl[ \vert Y_{t}\vert \bigr] \biggr] < + \infty.
\end{equation}
We deduce that there exists a constant $c'$ such that $
\vert u(t,x) \vert \leq c' (1+ \vert x \vert)$, for $t \in [0,T]$ and $x \in \RR^d$.
By \eqref{eq:3:5:1} and (A.6), we deduce that (for a possibly new value of $c'$)
$
\vert \hat{\alpha}(t,x,\mu_{t},u(t,x))\vert \leq c' ( 1+ \vert x \vert). 
$
Plugging this bound into the forward SDE satisfied by $X$ in \eqref{eq:4:4:6}, we deduce that
\begin{equation}
\label{eq:3:5:5}
\forall \ell \geq 1, \quad {\mathbb E} \bigl[ \sup_{0 \leq t \leq T} \vert X_{t} \vert^{\ell} \bigr] < + \infty,
\end{equation}
and, thus,
\begin{equation}
\label{eq:3:5:5:b}
{\mathbb E} \int_{0}^T \vert \hat{\alpha}_{t} \vert^2 dt < + \infty,
\end{equation}
with $\hat{\alpha}_{t}=\hat{\alpha}(t,X_{t},\mu_{t},Y_{t})$, for $t \in [0,T]$. 
We can now apply the same argument to any $(X^n_{t})_{0 \leq t \leq T}$, for any $n \geq 1$. We claim
\begin{equation}
\label{eq:3:5:6}
\forall \ell \geq 1, \quad \sup_{n \geq 1}{\mathbb E} \bigl[ \sup_{0 \leq t \leq T} \vert X_{t}^n \vert^{\ell} \bigr] < + \infty.
\end{equation}
Indeed, the constant 
$c$ in the statement of Lemma \ref{lem:unq:frozen} does depend on $n$. Moreover, the second-order moments 
of $\sup_{0 \leq t \leq T} \vert X^n_{t} \vert$ are bounded, uniformly in $n \geq 1$ by 
\eqref{eq:19:3:10}. 
By (A.5), the driver in the backward component in \eqref{eq:4:4:6} is at most of linear growth in $(x,y,\alpha)$, so that by 
\eqref{eq:17:3:6} and standard $L^2$ estimates for BSDEs, the second-order moments 
of $\sup_{0 \leq t \leq T} \vert Y^n_{t} \vert$ are uniformly bounded as well. This shows \eqref{eq:3:5:6} by repeating the proof of \eqref{eq:3:5:5}.
By \eqref{eq:3:5:5} and \eqref{eq:3:5:6}, we get that $\sup_{0 \leq t \leq T}W_{2}(\mu^{n_{p}}_{t},\mu_{t}) \rightarrow 0$ as $n$ tends to $+ \infty$,
with $\mu^{n_{p}}= \PP_{X^{n_{p}}}$.

Repeating the proof of \eqref{eq:3:5:2}, we have
\begin{equation}
\label{eq:5:5:5}
\begin{split}
\lambda' \EE \int_{0}^T \vert \hat{\alpha}_{t}^n
-\hat{\alpha}_{t}
\vert^2 dt 
&\leq J^n \bigl( \hat{\alpha} ; \mu^n \bigr) - J \bigl( \hat{\alpha} ; \mu \bigr) 
+
J \bigl( \bigl[ \hat{\alpha}^n, \mu^n \bigr] ; \mu \bigr) - J^n\bigl( \hat{\alpha}^n ; \mu^n\bigr)
\\
&\hspace{15pt}
+ 
\EE \int_{0}^T \langle b_{0}(t,\mu_{t}^n) - b_{0}(t,\mu_{t}),Y_{t} \rangle dt,
\end{split}
\end{equation}
where $J( \cdot ; \mu)$ is given by \eqref{eq:5:5:2} and $J^n( \cdot ; \mu^n)$ is defined in a similar way, but with $(f,g)$ and $(\mu_{t})_{0 \leq t \leq T}$ replaced by 
$(f^n,g^n)$ and $(\mu_{t}^n)_{0 \leq t \leq T}$; $J( [ \hat{\alpha}^n,\mu^n] ; \mu)$ is defined as in \eqref{eq:7:6:1}. With these definitions at hand, we notice that
\begin{equation*}
\begin{split}
&J^n \bigl( \hat{\alpha} ; \mu^n \bigr) - J\bigl( \hat{\alpha} ; \mu\bigr)
\\
&= \EE \bigl[ g^n(U_{T}^n,\mu_{T}^n) - g(X_{T},\mu_{T}) \bigr] 
+ \EE \int_{0}^T \bigl[ f^n\bigl(t,U_{t}^n,\mu_{t}^n,\hat{\alpha}_{t}\bigr) -  f \bigl(t,X_{t},\mu_{t},\hat{\alpha}_{t}\bigr) \bigr] dt,
\end{split}
\end{equation*}
where $U^n$ is the controlled diffusion process:
$$
dU^n_{t} = \bigl[ b_{0}(t,\mu_{t}^n) + b_{1}(t) U_{t}^n + b_{2}(t) \hat{\alpha}_{t} \bigr] dt + \sigma dW_{t}, \quad t \in [0,T]; \quad U^n_{0} = x_{0}.
$$
By Gronwall's lemma and by convergence of $\mu^{n_{p}}$ towards $\mu$ for the $2$--Wasserstein distance, we claim
that $U^{n_{p}} \rightarrow X$ as $p \rightarrow + \infty$, 
for the norm $\EE[ \sup_{0 \leq s \leq T} \vert \cdot_{s} \vert^2]^{1/2}$.
Using on one hand the uniform convergence of $f^n$ and $g^n$ towards $f$ and $g$ on bounded subsets of their respective domains, and on the other hand the convergence of $\mu^{n_{p}}$ towards $\mu$ together with 
the bounds (\ref{eq:3:5:5}--\ref{eq:3:5:5:b}), we deduce that 
$J^{n_{p}}( \hat{\alpha} ; \mu^{n_{p}}) \rightarrow J( \hat{\alpha}; \mu)$ as $p \rightarrow + \infty$.
Similarly, using the bounds (\ref{eq:17:3:6}--\ref{eq:3:5:5}--\ref{eq:3:5:6}), the other differences in the right-hand side in \eqref{eq:5:5:5} tend to 0 along the subsequence $(n_{p})_{p \geq 1}$ so that 
$\hat{\alpha}^{n_{p}} \rightarrow
\hat{\alpha}$ as $p \rightarrow + \infty$ in $L^2([0,T] \times \Omega,dt \otimes d\PP)$. 
We deduce that $X$ is the limit of the sequence $(X^{n_{p}})_{p \geq 1}$ 
for the norm ${\mathbb E}[\sup_{0 \leq s \leq T} \vert \cdot_{s} \vert^2 ]^{1/2}$. Therefore, $\mu$ matches the law of $X$ exactly, proving that equation \eqref{fo:mfFBSDE} is solvable. 
\end{proof}

\subsection{Choice of the Approximating Sequence}
\label{subsec:approx}

In order to complete the proof of Theorem \ref{prop:27:11:1}, we must specify the choice of the approximating sequence in Lemma \ref{lem:5:5:1}. Actually, the choice is performed in two steps. 
We first consider the case when the cost functions $f$ and $g$ are strongly convex in the variables $x$:

\begin{lemma}
\label{lem:17:3:1}
Assume that, in addition to (A.1--7), there exists a constant $\gamma >0$ such that the functions $f$ and $g$ satisfy 
(compare with \eqref{fo:lambdaconvexity}):
\begin{equation}
\label{eq:5:5:20}
\begin{split}
&f(t,x',\mu,\alpha') - f(t,x,\mu,\alpha) 
\\
&\hspace{15pt}- \langle (x'-x,\alpha'-\alpha),\partial_{(x,\alpha)} f(t,x,\mu,\alpha) \rangle \geq \gamma \vert x' -x \vert^2 + \lambda 
\vert \alpha' - \alpha \vert^2, 
\\
&g(x',\mu) - g(x,\mu) - \langle x'-x,\partial_{x} g(x,\mu) \rangle \geq \gamma \vert x' -x \vert^2. 
\end{split}
\end{equation} 
Then, there exist two positive constants $\lambda'$ and $c'_{L}$, depending only upon $\lambda$, $c_L$ and $\gamma$,  and two sequences of functions $(f^n)_{n \geq 1}$ and $(g^n)_{n \geq 1}$ such that

$(i)$ for any $n \geq 1$, $f^n$ and $g^n$ satisfy (A.1--7) with respect to the parameters $\lambda'$ and $c_{L}'$ and 
$\partial_{x} f^n$ and $\partial_{x} g^n$ are bounded,

$(ii)$ for any bounded subsets of 
$[0,T] \times \RR^d \times {\mathcal P}_{2}(\RR^d) \times \RR^k$, there exists an integer $n_{0}$, such that, for any $n \geq n_{0}$, 
$f^n$  and $g^n$ coincide with $f$ and $g$ respectively. 
\end{lemma}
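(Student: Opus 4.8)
The plan is to build $f^n$ and $g^n$ by a \emph{convex truncation} in the state variable that caps the $x$--gradient at height $n$ while leaving the structure of \eqref{fo:lambdaconvexity} and \eqref{eq:5:5:20} intact. Since only $f$ and $g$ are modified, the drift assumptions (A.1) and (A.3) are automatically preserved, and the whole issue is to control the cost functions. The decisive trick is to first peel off the strongly convex $\alpha$--part. Set $h(t,x,\mu,\alpha)=f(t,x,\mu,\alpha)-\lambda\vert\alpha\vert^2$; by \eqref{fo:lambdaconvexity} the map $(x,\alpha)\mapsto h(t,\cdot,\mu,\cdot)$ is \emph{jointly convex} and, by \eqref{eq:5:5:20}, still $\gamma$--strongly convex in $x$ and $C^{1,1}$ with constant $c_L$. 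Using the cone kernels $\Psi_n(w)=n\vert w\vert$, put
$$
h^n(t,x,\mu,\alpha)=\inf_{z\in\RR^d}\bigl[h(t,z,\mu,\alpha)+n\vert x-z\vert\bigr],\qquad f^n=h^n+\lambda\vert\alpha\vert^2 ,
$$
and define $g^n$ by the same inf--convolution of $g$ in $x$ (no $\alpha$, so no peeling is needed).

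Several properties are then immediate. As a partial infimum over $z$ of the jointly convex map $(x,z,\alpha)\mapsto h(t,z,\mu,\alpha)+n\vert x-z\vert$, the function $h^n$ is jointly convex in $(x,\alpha)$; adding back $\lambda\vert\alpha\vert^2$ restores \eqref{fo:lambdaconvexity} for $f^n$ with \emph{exactly} the same $\lambda$, which is the whole point of the peeling. Inf--convolution with $n\vert\cdot\vert$ makes $h^n$ globally $n$--Lipschitz in $x$, so $\vert\partial_x f^n\vert=\vert\partial_x h^n\vert\le n$ \emph{uniformly in $(t,\mu,\alpha)$}, which is the boundedness demanded in (i) and in \eqref{eq:6:5:1}, and likewise for $\partial_x g^n$. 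Although $\Psi_n$ is not smooth at the origin, the $\gamma$--strong convexity of $h$ in $z$ makes the minimizer $z^{\ast}=z^{\ast}(t,x,\mu,\alpha)$ Lipschitz, whence $h^n$, and then $f^n$, is $C^{1,1}$ in $x$ with a constant controlled by $c_L$ and $\gamma$. Finally, $h^n=h$ precisely where $\vert\partial_x h\vert\le n$; since $\partial_x f$ is bounded on bounded subsets, on any prescribed bounded set one has $f^n=f$ and $g^n=g$ as soon as $n$ exceeds that bound, which is exactly (ii).

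What remains is to verify the full list (A.2) and (A.4)--(A.7) with constants $\lambda'$ and $c_L'$ depending only on $\lambda$, $c_L$ and $\gamma$, uniformly in $n$. The regularity in $\alpha$ and the mixed regularity rest on the envelope identity $\partial_\alpha h^n(t,x,\mu,\alpha)=\partial_\alpha h(t,z^{\ast},\mu,\alpha)$ together with the Lipschitz stability of $z^{\ast}$; this is where $\gamma>0$ is genuinely used, since the uniform strong convexity of $z\mapsto h(t,z,\mu,\alpha)+n\vert x-z\vert$ gives $z^{\ast}$ Lipschitz in $(x,\alpha)$ with a modulus depending only on $\gamma,c_L$, and $\partial_x h^n,\partial_\alpha h^n$ inherit a uniform Lipschitz constant. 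Assumption (A.6) is in fact free: $\partial_\alpha f^n(t,x,\mu,0)=\partial_\alpha h(t,z^{\ast},\mu,0)=\partial_\alpha f(t,z^{\ast},\mu,0)$ is bounded by $c_L$ for \emph{every} state by (A.6) for $f$.

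The main obstacle, and the bulk of the work, is the uniform--in--$n$ bookkeeping for (A.5) and (A.7). For (A.5) one estimates the increment $\vert h^n(t,\cdot,\mu',\cdot)-h^n(t,\cdot,\mu,\cdot)\vert$ by the corresponding increment of $h$ at the active point $z^{\ast}$, which strong convexity confines to a ball of radius $O(n/\gamma)$, and then propagates the weighted Lipschitz/growth bound of $f$ with an enlarged constant; the delicate point is to check that this enlargement stays bounded by a factor depending only on $\lambda,c_L,\gamma$ and not on $n$. For (A.7) one must verify that replacing the evaluation point $0$ by $z^{\ast}(t,0,\delta_x,0)$ — which the $x$--strong convexity keeps bounded independently of the measure variable $\delta_x$ — only shifts $\langle x,\partial_x f^n(t,0,\delta_x,0)\rangle$ by a term absorbed into $-c_L'(1+\vert x\vert)$, so that the weak mean--reverting sign condition survives with a larger constant. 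Once these two estimates are in hand, all of (A.1--7) hold uniformly in $n$ with caps tending to $+\infty$, and both (i) and (ii) are established.
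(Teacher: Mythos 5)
Your construction is, up to the peeling device, the same as the paper's: since your inf-convolution acts only in the state variable and $\lambda\vert\alpha\vert^2$ is constant in $z$, your $f^n$ coincides with the inf-convolution of $f(\cdot,\mu,\alpha)$ with $n\vert\cdot\vert$, which (as $f$ is convex in $x$) is exactly the truncated Legendre transform \eqref{eq:25:11:1} of the appendix. The peeling argument for \eqref{fo:lambdaconvexity}, the $n$-Lipschitz bound on $\partial_x f^n$, the identification of the coincidence set $\{\vert\partial_x f\vert\le n\}$ for (ii), and the Danskin identity for (A.6) are all sound. Your justification of the $C^{1,1}$ property, however, does not hold up: the Lipschitz stability of $z^{\ast}$ in $x$ that you invoke does not follow from strong convexity alone --- the natural estimate $2\gamma\vert z^{\ast}(x)-z^{\ast}(x')\vert^2\le 2n\vert x-x'\vert$ is only H\"older-$1/2$ with an $n$-dependent constant, and deducing Lipschitzness of $z^{\ast}=(\nabla_x h)^{-1}\circ\nabla_x h^n$ presupposes the Lipschitz bound on $\nabla_x h^n$ you are trying to prove. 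The correct and simpler route is that inf-convolution preserves semiconcavity with the \emph{same} constant (evaluate the infimum defining $h^n(x\pm he,\mu,\alpha\pm h\beta)$ at $z^{\ast}(x,\alpha)\pm he$ and use the $C^{1,1}$ bound on $h$); this needs no $\gamma>0$ at all and is the paper's Second Step.

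The genuine gaps are in (A.5) and (A.7), which you correctly identify as the bulk of the work but do not close, and the partial bounds you do state point the wrong way. For (A.5), the localization $\vert z^{\ast}\vert=O(n/\gamma)$ is useless: it would make the local Lipschitz constant blow up with $n$. The bound that works is $\vert z^{\ast}\vert\le c(1+R)$ \emph{independently of $n$}, obtained by comparing the value at $z^{\ast}$ with the value at $x$ itself and using the two-sided bound $\vert f\vert\le c(1+R^2)$ on the bounded set (this is \eqref{eq:5:5:7}--\eqref{eq:5:5:10}); strong convexity enters only to bound $f$ from below. Even with that, for a fixed $n$ the linear-growth bound $\vert\partial_x f^n(t,0,\mu,0)\vert\le c(1+M_2(\mu))$ demanded by (A.5) is available only for $M_2(\mu)$ below an $n$-dependent threshold, which is why the paper must further compose with the measure truncation $\Phi_p$. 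For (A.7), your key claim --- that $z^{\ast}(t,0,\delta_x,0)$ is bounded independently of $x$ --- is false: $\partial_x f(t,0,\delta_x,0)$ may grow linearly in $\vert x\vert$, so $\vert z^{\ast}\vert$ can grow like $(1+\vert x\vert)^2/n$, and the resulting perturbation of $\langle x,\partial_x f^n(t,0,\delta_x,0)\rangle$ is cubic in $\vert x\vert$, which cannot be absorbed into $-c(1+\vert x\vert)$. Repairing this requires the second modification in the paper (push-forward by $\Psi_p$ capping the mean of the measure, followed by the case analysis $\vert x\vert\le p/2$ versus $\vert x\vert\ge p/2$). Without these two additional modifications, the family $(f^n)_{n\ge 1}$ you construct does not satisfy (A.5) and (A.7) with constants uniform in $n$, so the lemma as stated is not yet proved.
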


The proof of Lemma \ref{lem:17:3:1} is a pure technical exercise in convex analysis, and for this reason, we postpone its proof  to an appendix at the end of the paper.

\subsection{Proof of Theorem \ref{prop:27:11:1}}
\label{subsec:proof}

Equation \eqref{fo:mfFBSDE} is solvable when, in addition to (A.1--7), $f$ and $g$ satisfy the convexity condition \eqref{eq:5:5:20}. Indeed, by Lemma \ref{lem:17:3:1}, there exists an approximating sequence $(f^n,g^n)_{n \geq 1}$ satisfying $(i)$ and $(ii)$ in the statement of Lemma \ref{lem:5:5:1}, and also $(iii)$ by 
Proposition \ref{prop:17:3:1}. When $f$ and $g$ satisfy (A.1--7) only, the assumptions of Lemma \ref{lem:5:5:1} are satisfied with the following approximating sequence:
$$
f_{n}(t,x,\mu,\alpha) = f(t,x,\mu,\alpha) + \frac{1}{n} \vert x \vert^2; \quad 
g_{n}(x,\mu)=g(x,\mu) + \frac{1}{n} \vert x \vert^2, 
$$
for $(t,x,\mu,\alpha) \in [0,T] \times \RR^d \times {\mathcal P}(\RR^d) \times \RR^k$ and $n \geq 1$. 
Therefore,  \eqref{fo:mfFBSDE} is solvable under (A.1--7).
Moreover, given an arbitrary solution to \eqref{fo:mfFBSDE}, the existence of a function $u$, as in the statement of Theorem \ref{prop:27:11:1}, follows from 
Lemma \ref{lem:unq:frozen} and 
\eqref{eq:3:5:4}. Boundedness of the moments of the forward process is then proven as in 
\eqref{eq:3:5:5}. $\Box$

\section{\textbf{Propagation of Chaos and Approximate Nash Equilibriums}}
\label{se:apNash}
While the rationale for the mean-field strategy proposed by  Lasry-Lions is clear given the nature of Nash equilibriums (as opposed to other forms of optimization
suggesting the optimal control of stochastic dynamics of the McKean-Vlasov type as studied in \cite{CarmonaDelarue3}), it may not be obvious how the  
solution of the FBSDE introduced and solved in the previous sections provides approximate Nash equilibrium for large games.
In this section, we prove just that. The proof relies on the fact that the FBSDE value function is Lipschitz continuous, standard arguments in the propagation of chaos theory, and the following specific result due to Horowitz et al. (see for example Section 10 in \cite{RachevRuschendorf}) which we state as a lemma for future reference:

\begin{lemma}
\label{le:Horowitz}
Given $\mu\in\cP_{d+5}(\RR^d)$, there exists a constant $c$ depending only upon $d$ and $M_{d+5}(\mu)$ (see the notation 
\eqref{eq:20:6:1}), such that
$$
{\mathbb E} \bigl[ W_{2}^2(\bar{\mu}^N,\mu) \bigr] \leq C N^{-2/(d+4)},
$$
where $\bar{\mu}^N$ denotes the empirical measure of any sample of size $N$ from $\mu$.
\end{lemma}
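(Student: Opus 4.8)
The plan is to bound $W_{2}^2(\bar{\mu}^N,\mu)$ by a deterministic multiresolution functional of the two measures evaluated on a nested family of dyadic cubes, and then to take expectations, exploiting the fact that on each cube the empirical mass is a binomial random variable whose fluctuations are easy to control. First I would fix a reference cube $C_{0}$ of side $R$ centered at the origin and subdivide it dyadically: let $\mathcal{Q}_{k}$ denote the family of its $2^{kd}$ dyadic subcubes of side $R2^{-k}$, whose squared diameter is of order $R^2 2^{-2k}$. For any two probability measures $\nu_{1},\nu_{2}$ supported in $C_{0}$, one constructs a transport plan scale by scale, matching masses cube by cube at successive resolutions, to obtain the elementary bound
\[
W_{2}^2(\nu_{1},\nu_{2})\le C_{d}\, R^2 \sum_{k\ge 0} 2^{-2k}\sum_{Q\in\mathcal{Q}_{k}}\bigl|\nu_{1}(Q)-\nu_{2}(Q)\bigr|,
\]
for a dimensional constant $C_{d}$. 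This reduces everything to the control of cube discrepancies.

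Next I would pass to expectations. Since the $N$ samples are i.i.d.\ with law $\mu$, the count $N\bar{\mu}^N(Q)$ is $\mathrm{Binomial}(N,\mu(Q))$, so that a one-line variance estimate gives $\EE[|\bar{\mu}^N(Q)-\mu(Q)|]\le (\mu(Q)/N)^{1/2}$. Summing over $Q\in\mathcal{Q}_{k}$ by Cauchy--Schwarz and using $\sum_{Q}\mu(Q)\le 1$ yields $\sum_{Q\in\mathcal{Q}_{k}}\EE[|\bar{\mu}^N(Q)-\mu(Q)|]\le (2^{kd}/N)^{1/2}$, while the crude total-variation bound gives the competing estimate $2$. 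Inserting the minimum of the two into the multiresolution inequality and summing the geometric-type series in $k$, truncated at the scale $K$ with $2^{Kd}\simeq N$, produces, for measures supported in $C_{0}$, an interior bound of order $R^2 N^{-2/d}$ when $d>4$ and of the faster order $R^2 N^{-1/2}$ (up to a logarithm at $d=4$) when $d\le 4$.

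It remains to remove the boundedness assumption, which is where the moment hypothesis enters. I would split $\RR^d$ into the ball $B_{R}$ and its complement, and compare each measure with its truncation to $B_{R}$, bounding $W_{2}^2$ between a measure and its truncation by the escaping mass weighted by $|x|^2$ and recombining through the triangle inequality for $W_{2}$. By Markov's inequality applied to the $(d+5)$-th moment,
\[
\int_{\{|x|>R\}}|x|^2\,d\mu(x)\le R^{-(d+3)}\bigl[M_{d+5}(\mu)\bigr]^{d+5},
\]
and the analogous empirical tail has the same expectation. Adding the interior and tail contributions gives, for a constant $C$ depending only on $d$ and $M_{d+5}(\mu)$,
\[
\EE\bigl[W_{2}^2(\bar{\mu}^N,\mu)\bigr]\le C\bigl(R^2 N^{-2/d}+R^{-(d+3)}\bigr),
\]
in the binding regime $d>4$, with an even smaller interior term otherwise. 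Optimizing the free radius $R$ over $N$ (so that the tail error never dominates the interior sampling error) yields a rate at least as fast as the asserted $N^{-2/(d+4)}$.

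The routine parts are the binomial variance estimate and the Cauchy--Schwarz summation over cubes; the delicate points that I would treat with care are the scale-by-scale coupling construction underlying the deterministic multiresolution inequality, and the bookkeeping in the final optimization, ensuring that the truncation radius is tuned so that the tail, controlled by exactly the available $d+5$ moments, stays below the interior rate uniformly in $N$. The main obstacle is thus the combinatorial transport construction giving the deterministic bound, together with securing a constant that depends only on $d$ and $M_{d+5}(\mu)$ and not on $N$.
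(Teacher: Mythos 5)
You should first note that the paper does not actually prove this lemma: it is quoted as a known result of Horowitz et al., with a pointer to Section 10 of \cite{RachevRuschendorf}. Your dyadic-partition scheme is the standard modern route to such bounds, and the individual ingredients all check out: the multiresolution transport inequality on a cube of side $R$, the estimate $\EE[|\bar{\mu}^N(Q)-\mu(Q)|]\le(\mu(Q)/N)^{1/2}$ from the binomial variance, the Cauchy--Schwarz summation giving $(2^{kd}/N)^{1/2}$ against the trivial bound $2$, and the truncation to $B_R$ with the tail controlled through the $(d+5)$-th moment. (The renormalization of the restricted measures and the fact that the number of sample points falling in $B_R$ is random are genuinely routine, but should still be written out; the expectation identity you invoke for the empirical tail is correct.)

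There is, however, a concrete quantitative gap at $d=1$, where your final optimization claim fails. Your interior estimate is $C_dR^2(N^{-2/d}+N^{-1/2})$ (with a logarithm at $d=4$), hence $C_dR^2N^{-1/2}$ for $d\le 3$, and your tail estimate is $CR^{-(d+3)}$. Balancing $R$ yields the rate $N^{-(d+3)/(2(d+5))}$ for $d\le 3$ and $N^{-2(d+3)/(d(d+5))}$ for $d\ge 5$. The second exponent always dominates $2/(d+4)$, since $(d+3)(d+4)\ge d(d+5)$; the first dominates $2/(d+4)$ exactly when $d^2+3d-8\ge 0$, i.e.\ for $d\ge 2$ but \emph{not} for $d=1$, where you obtain $N^{-1/3}$ against the asserted $N^{-2/5}$. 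So the statement ``optimizing $R$ yields a rate at least as fast as $N^{-2/(d+4)}$'' is false in dimension one: the single global truncation is too lossy there. The standard repair is to replace it by a weighted decomposition over dyadic annuli $A_j=\{2^j\le|x|<2^{j+1}\}$, applying the multiresolution inequality on each annulus and summing $2^{2j}$ times the local discrepancy over $j$, which recovers a rate better than $N^{-2/(d+4)}$ for all $d$ under the available moments; alternatively $d=1$ can be disposed of directly via the quantile coupling. With that modification your argument does yield the lemma, with a constant depending only on $d$ and $M_{d+5}(\mu)$ as required.
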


Throughout this section,  assumptions (A.1--7) are in force. We let $(X_{t},Y_{t},Z_{t})_{0 \leq t \leq T}$ be a solution 
of \eqref{fo:mfFBSDE} and $u$ be the associated FBSDE value function. We denote by $(\mu_{t})_{0 \leq t \leq T}$ the flow of marginal probability measures $\mu_t=\PP_{X_{t}}$, for $0 \leq t \leq T$. We also denote by $J$ the optimal cost of the limiting Mean-Field problem
\begin{equation}
\label{eq:28:3:20}
J = \EE \biggl[ g(X_{T},\mu_{T}) + \int_{0}^T f\bigl(t,X_{t},\mu_{t},\hat{\alpha}(t,X_{t},\mu_{t},Y_{t})\bigr) dt \biggr],
\end{equation} 
where as before, $\hat\alpha$ is the minimizer function constructed in Lemma \ref{le:minimizer}.
For convenience, we fix a sequence  $((W_{t}^i)_{0 \leq t \leq T})_{i\ge 1}$ of independent $m$-dimensional Brownian motions, and for each integer $N$, we consider the solution $(X_{t}^1,\dots,X_{t}^N)_{0 \leq t \leq T}$  of the system of $N$ stochastic differential equations
\begin{equation}
\label{eq:28:3:12}
dX_{t}^i = b\bigl(t,X_{t}^i,\bar{\mu}^N_{t},\hat{\alpha}\bigl(t,X_{t}^i,\mu_{t},u(t,X_{t}^i)\bigr) \bigr) dt + \sigma dW_{t}^i, 
 \qquad \bar{\mu}^N_{t} = \frac{1}{N} \sum_{j=1}^N \delta_{X_{t}^j},
\end{equation}
with $t \in [0,T]$ and $X_{0}^i = x_{0}$. Equation  (\ref{eq:28:3:12})  is well posed since  $u$ satisfies the regularity property \eqref{eq:28:3:14} and the minimizer $\hat{\alpha}(t,x,\mu_{t},y)$ was proven, in Lemma \ref{le:minimizer}, to be Lipschitz continuous and at most of linear growth 
in the variables $x$ and $y$, uniformly in $t \in [0,T]$.
The processes $(X^i)_{1 \leq i \leq N}$ give the dynamics of the private states of the $N$ players in the stochastic differential game of interest when
the players use the strategies
\begin{equation}
\label{fo:alphaNi}
\balpha_{t}^{N,i} = \hat{\alpha}(t,X_{t}^i,\mu_{t},u(t,X_{t}^i)),\qquad 0\le t\le T,\;\; i\in\{1,\cdots,N\}.
\end{equation}
These strategies are in closed loop form. They are even \emph{distributed} since at each time $t \in [0,T]$, a player only needs to know the state of his own private state in order to compute the value of the control to apply at that time. By boundedness of $b_{0}$ 
and by \eqref{eq:3:5:1} and 
\eqref{eq:28:3:14}, it holds
\begin{equation}
\label{eq:9:6:1}
\sup_{N \geq 1} \max_{1 \leq i \leq N} \biggl[ {\mathbb E} \bigl[ \sup_{0 \leq t \leq T} \vert X_{t}^i \vert^2 \bigr] + \EE \int_{0}^T 
\vert \bar{\alpha}^{N,i}_{t} \vert^2 dt \biggr] < + \infty.
\end{equation}
\vskip 2pt
For the purpose of comparison, we introduce the notation we use when the players choose a generic set of strategies, say $((\beta_{t}^i)_{0 \leq t \leq T})_{1 \leq i \leq N}$. In this case, the dynamics of the private state $U^i$
of player $i\in\{1,\cdots, N\}$ are given by: 
\begin{equation}
\label{eq:28:3:10}
dU_{t}^i = b \bigl(t,U_{t}^i,\bar{\nu}_t^{N},\beta_t^i \bigr) dt + \sigma dW_{t}^i, \qquad 
\bar{\nu}^N_{t} = \frac{1}{N} \sum_{j=1}^N \delta_{U_{t}^j},
\end{equation}
with $t \in [0,T]$ and $U_{0}^i = x_{0}$, and where $((\beta_{t}^i)_{0 \leq t \leq T})_{1 \leq i \leq N}$ are $N$ square-integrable $\RR^k$-valued processes that are progressively measurable with respect to the filtration generated by $(W^1,\dots,W^N)$.
For each $1 \leq i \leq N$, we denote by
\begin{equation}
\label{eq:28:3:11}
\bar{J}^{N,i}(\beta^1,\dots,\beta^N) =  {\mathbb E}
\biggl[ g\bigl(U_{T}^i,\bar{\nu}_{T}^N
\bigr) + \int_{0}^T f(t,U_{t}^i,\bar{\nu}_{t}^N,\beta_{t}^i) dt \biggr],
\end{equation}
the cost to the $i$th player. Our goal is to construct approximate Nash equilibriums for the $N$-player game 
from a solution of \eqref{fo:mfFBSDE}. We follow the approach used by Bensoussan et al. \cite{Bensoussanetal} in the linear-quadratic case. See also \cite{Cardaliaguet}.

\begin{theorem}
\label{th:apNash}
Under assumptions (A.1--7), the strategies $(\balpha_{t}^{N,i})_{0\le t\le T,\;1 \leq i \leq N}$ defined in \eqref{fo:alphaNi} form an approximate Nash equilibrium of the $N$-player game (\ref{eq:28:3:10}--\ref{eq:28:3:11}).  More precisely, there exists a constant $c>0$ and a sequence of positive numbers $(\epsilon_N)_{N\ge 1}$ such that, for each $N \geq 1$,
\vskip 1pt
(i) $\epsilon_N\le c N^{-1/(d+4)}$ ;
\vskip 1pt
(ii) for any player $i\in\{1,\cdots,N\}$ and any progressively measurable strategy 
$\beta^i=(\beta^i_t)_{0\le t\le T}$, such that $\EE \int_{0}^T \vert \beta_{t}^i \vert^2 dt < + \infty$, one has
\begin{equation}
\label{fo:apNash}
\bar{J}^{N,i}(\balpha^{1,N},\dots,\balpha^{i-1,N},\beta^i,\balpha^{i+1,N},\dots,\balpha^{N,N}) \ge \bar{J}^{N,i}(\balpha^{1,N},\cdots,\balpha^{N,N})-\epsilon_N.
\end{equation}
\end{theorem}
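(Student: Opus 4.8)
The plan is to combine a quantitative propagation-of-chaos estimate with the sufficiency part of the stochastic maximum principle (Theorem \ref{thm:SMP}), following the scheme of Bensoussan et al. First I would introduce the decoupled system: let $\tilde{X}^1,\dots,\tilde{X}^N$ be independent copies of the mean-field optimal state $X$, with $\tilde{X}^j$ driven by $W^j$, so that each has law $(\mu_t)_{0\le t\le T}$ and $Y^j_t=u(t,\tilde X^j_t)$. Since the only measure-dependence of $b$ enters through $b_0(t,\cdot)$, which is $c_L$-Lipschitz in $W_2$ by (A.5), a Gronwall argument gives $\EE[\sup_t|X^j_t-\tilde X^j_t|^2]\le c\,\EE\int_0^T W_2^2(\bar\mu^N_s,\mu_s)\,ds$, where $\bar\mu^N$ is the equilibrium empirical measure. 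Splitting $W_2(\bar\mu^N_s,\mu_s)$ through the empirical measure $\tilde\mu^N_s=N^{-1}\sum_j\delta_{\tilde X^j_s}$ and invoking Lemma \ref{le:Horowitz} (applicable because the moment bound \eqref{eq:3:5:5} gives $\mu_t\in\cP_{d+5}(\RR^d)$) yields $\sup_t\EE[W_2^2(\bar\mu^N_t,\mu_t)]\le cN^{-2/(d+4)}$. Feeding this through the Lipschitz/growth bounds (A.5) on $f$ and $g$, using Cauchy-Schwarz and the uniform moment bounds \eqref{eq:9:6:1}, produces the first comparison $|\bar J^{N,i}(\balpha^{1,N},\dots,\balpha^{N,N})-J|\le cN^{-1/(d+4)}$, with $J$ as in \eqref{eq:28:3:20}.

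Next I would treat a unilateral deviation. Fix player $i$ and an admissible $\beta^i$; without loss of generality assume $\bar J^{N,i}(\balpha^{-i},\beta^i)\le\bar J^{N,i}(\balpha^{1,N},\dots,\balpha^{N,N})$, since otherwise \eqref{fo:apNash} is trivial. Under this restriction the $\lambda$-convexity of $f$ in $\alpha$ from (A.2), together with the bound (A.6) on $\partial_\alpha f(t,\cdot,\cdot,0)$ and the mean-reverting/growth controls (A.5)--(A.7), forces a uniform a priori bound $\EE\int_0^T|\beta^i_t|^2\,dt\le c$ and hence $\EE[\sup_t|U^i_t|^2]\le c$, uniformly in $N$. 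The crucial point is that the deviated empirical measure $\bar\nu^N$ of \eqref{eq:28:3:10} stays $N^{-1/(d+4)}$-close to $(\mu_t)$: the $N-1$ non-deviating players are again compared, via Gronwall, to the copies $\tilde X^j$, while the single deviating player contributes only an $O(1/N)$ atom to $W_2^2(\bar\nu^N_t,\tilde\mu^N_t)$, which is negligible next to the Horowitz rate since $2/(d+4)\le 2/5<1$. One obtains $\sup_t\EE[W_2^2(\bar\nu^N_t,\mu_t)]\le cN^{-2/(d+4)}$.

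With the measure controlled, I would compare the deviating player's realized cost to the frozen mean-field cost. Let $V^i$ solve $dV^i_t=b(t,V^i_t,\mu_t,\beta^i_t)\,dt+\sigma\,dW^i_t$ with $V^i_0=x_0$; since the drifts of $U^i$ and $V^i$ differ only through $b_0(t,\bar\nu^N_t)-b_0(t,\mu_t)$, Gronwall gives $\EE[\sup_t|U^i_t-V^i_t|^2]\le cN^{-2/(d+4)}$, and the Lipschitz bounds (A.5) then yield $|\bar J^{N,i}(\balpha^{-i},\beta^i)-J(\beta^i;\mu)|\le cN^{-1/(d+4)}$. At this point the sufficiency statement of Theorem \ref{thm:SMP} applies, crucially in the form of Remark \ref{rem:SMP0}, since $\beta^i$ is only progressively measurable for the filtration generated by $(W^1,\dots,W^N)$ while $W^i$ remains a Brownian motion for it, giving $J(\beta^i;\mu)\ge J(\hat\alpha;\mu)=J$. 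Chaining the three estimates produces
$$
\bar J^{N,i}(\balpha^{-i},\beta^i)\ge J(\beta^i;\mu)-cN^{-1/(d+4)}\ge J-cN^{-1/(d+4)}\ge \bar J^{N,i}(\balpha^{1,N},\dots,\balpha^{N,N})-cN^{-1/(d+4)},
$$
which is precisely \eqref{fo:apNash} with $\epsilon_N=cN^{-1/(d+4)}$, establishing both (i) and (ii).

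I expect the main obstacle to be the deviation step, and within it the a priori $L^2$ control of an arbitrary competing strategy $\beta^i$: one must rule out deviations that drive $U^i$ far away and thereby distort the empirical measure, which is exactly why the reduction to cost-nonincreasing deviations together with the coercivity coming from the $\lambda$-convexity of $f$ is indispensable. Keeping the propagation-of-chaos rate uniform in the presence of the measure feedback in both the state dynamics and the costs, and verifying the higher-moment bounds (uniform in $N$) needed to apply the growth form of (A.5) and Lemma \ref{le:Horowitz}, are the places where care is required; everything else reduces to a bookkeeping of Gronwall and Cauchy-Schwarz estimates.
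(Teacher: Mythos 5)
Your proposal is correct and follows essentially the same route as the paper: decoupled i.i.d.\ copies of the McKean--Vlasov optimal state plus the Horowitz/Sznitman rate to obtain $\bar J^{N,i}(\balpha^{1,N},\dots,\balpha^{N,N})=J+O(N^{-1/(d+4)})$, Gronwall control of the deviated empirical measure, comparison of the deviating player's cost with the frozen mean-field cost, and the sufficiency form of the stochastic maximum principle invoked through Remark \ref{rem:SMP0}. Your reduction to cost-nonincreasing deviations with an a priori $L^2$ bound on $\beta^i$ is simply a repackaging of the paper's dichotomy on whether $\EE\int_0^T\vert\beta^1_t\vert^2\,dt$ exceeds a threshold $A$; both rest on the same coercivity estimate coming from the $\lambda$-convexity of $f$ and the linear growth in the measure argument of the derivatives of $f$ and $g$.
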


\begin{proof}  By symmetry (invariance under permutation) of the coefficients of the private states dynamics and costs, we only need to prove \eqref{fo:apNash} for $i=1$.
Given a progressively measurable process  $\beta^1=(\beta^1_t)_{0\le t\le T}$ satisfying $\EE \int_{0}^T \vert \beta_{t}^1 \vert^2 dt < + \infty$, let us use the quantities defined in \eqref{eq:28:3:10} and \eqref{eq:28:3:11} with $\beta^i_t= \balpha_{t}^{N,i}$ for $i \in \{2,\cdots, N\}$ and $t \in [0,T]$. By 
boundedness of $b_{0}$, $b_1$ and $b_2$ and by Gronwall's inequality, we get:
\begin{equation}
\label{fo:U1est}
\EE\bigg[\sup_{0 \leq t \leq T} \vert U_{t}^1 \vert^2\bigg] \leq c \biggl( 1+ \EE \int_{0}^T \vert \beta^1_{t} \vert^2 dt\bigg) .
\end{equation}
Using the fact that the strategies $(\balpha_{t}^{N,i})_{0\le t\le T}$ satisfy the square integrability condition of admissibility, the same argument gives:
\begin{equation}
\label{eq:8:5:5}
\EE\bigg[\sup_{0 \leq t \leq T} \vert U_{s}^i \vert^2\bigg] \leq c,
\end{equation}
for $2 \leq i \leq N$,
which clearly implies after summation:
\begin{equation}
\label{fo:Uest}
\frac1N\sum_{j=1}^N \EE\bigg[ \sup_{0\le t\le T} \vert U_{t}^j \vert^2\bigg] \leq c \biggl( 1+ \frac1N\EE \int_{0}^T \vert \beta^1_{t} \vert^2 dt 
\biggr).
\end{equation}

\vskip 2pt
For the next step of the proof we introduce the system of decoupled independent and identically distributed
states
$$
d\bX_{t}^i = b\bigl(t,\bX_{t}^i,\mu_t,\hat{\alpha}(t,\bX_{t}^i,\mu_t,u(t,\bX^i_t)) \bigr) dt + \sigma dW_{t}^i, \quad 0 \leq t \leq T.
$$
Notice that the stochastic processes $\bX^i$  are independent copies of $X$ and, in particular, $\PP_{\bX_{t}^i} = \mu_{t}$ for any $t \in [0,T]$ and $i\in\{1,\cdots,N\}$. We shall use the notation: 
$$
\hat{\alpha}_{t}^i = \hat{\alpha}\bigl(t,\bX_{t}^i,\mu_t,u(t,\bX^i_t)\bigr), \quad t \in [0,T], \quad i \in \{1,\dots,N\}.
$$
Using the regularity of the FBSDE value function $u$ and the uniform boundedness of the family $(M_{d+5}(\mu_{t}))_{0 \leq t \leq T}$
derived in Theorem 
\ref{prop:27:11:1} together with  the estimate recalled in Lemma \ref{le:Horowitz}, we can follow  Sznitman's  proof \cite{Sznitman} (see also Theorem 1.3 of \cite{JourdainMeleardWoyczynski}) and get
\begin{equation}
\label{fo:JMW1}
\max_{1 \leq i \leq N} {\mathbb E} \bigl[  \sup_{0 \leq t \leq T}\vert X_{t}^i - \bar{X}_{t}^i \vert^2 \bigr] \leq c N^{-2/(d+4)},
\end{equation}
(recall that $(X^1,\dots,X^N)$ solves (\ref{eq:28:3:12})), and this implies:
\begin{equation}
\label{fo:JMW2}
\sup_{0 \leq t \leq T} {\mathbb E} \bigl[ W_{2}^2 (\bar{\mu}_{t}^N,\mu_{t}) \bigr] \leq c N^{-2/(d+4)}.
\end{equation}
Indeed, for each $t\in[0,T]$,
\begin{equation}
\label{fo:W2est}
W_2^2(\bar\mu^N_t,\mu_t) \le \frac2N\sum_{i=1}^N|X^i_t-\bX^i_t|^2+2W_2^2\bigg(\frac1N\sum_{i=1}^N\delta_{\bX^i_t},\mu_t\bigg),
\end{equation}
so that, taking expectations on both sides and using \eqref{fo:JMW1} and Lemma  \ref{le:Horowitz}, we get the desired estimate \eqref{fo:JMW2}.
Using the local-Lipschitz regularity of the coefficients $g$ and $f$ together with Cauchy-Schwarz inequality, we get, for each $i\in\{1,\cdots,N\}$, 
\begin{equation*}
\begin{split}
&\bigl|J-\bar{J}^{N,i}(\balpha^{N,1},\dots,\balpha^{N,N})\bigr|
\\
&\hspace{15pt}=\bigg|\EE\bigg[g(\bX_T^i,\mu_T)+\int_0^Tf\bigl(t,\bX^i_t,\mu_t,\hat\alpha_{t}^i\bigr)dt
-g(X^i_T,\bar\mu^N_T)-\int_0^Tf\bigl(t,X^i_t,\bar\mu^N_t,\bar{\alpha}_{t}^{N,i}\bigr)dt\bigg]
\bigg|
\\
&\hspace{15pt}\le c  \EE\bigg[ \biggl( 1+ \vert \bX_{T}^i \vert^2 + \vert X_{T}^i \vert^2  + \frac{1}{N}
\sum_{j=1}^N \vert X_{T}^j \vert^2 \biggr) \biggr]^{1/2}  \EE \bigl[ \vert \bX_T^i-X^i_T|^2  
+ W_2^2 (\mu_T,\bar\mu^N_T) \bigr]^{1/2}
\\
&\hspace{30pt} +
c \int_0^T  \biggl\{ \EE \biggl[\biggl( 1+ \vert \bX_{t}^i \vert^2 
+ \vert X_{t}^i \vert^2 
+ \vert \hat{\alpha}_{t}^i \vert^2
+ \vert \bar{\alpha}_{t}^{N,i} \vert^2  + \frac{1}{N}
\sum_{j=1}^N \vert X_{t}^j \vert^2 \biggr) \biggr]^{1/2}
\\
&\hspace{60pt} \times \EE \bigl[ |\bX_t^i-X^i_t|^2 + \vert \hat{\alpha}_{t}^i - \bar{\alpha}_{t}^{N,i} \vert^2 + 
W_2^2(\mu_t,\bar\mu^N_t) \bigr]^{1/2}  \biggr\} dt,
\end{split}
\end{equation*}
for some constant $c>0$ which can change from line to line.
By \eqref{eq:9:6:1}, we deduce 
\begin{equation*}
\begin{split}
\bigl|J-\bar{J}^{N,i}(\balpha^{N,1},\dots,\balpha^{N,N})\bigr|
&\le c \EE \bigl[ \vert \bX_T^i-X^i_T|^2  
+ W_2^2(\mu_T,\bar\mu^N_T) \bigr]^{1/2} 
\\
&\hspace{-12pt} + c
 \biggl( \int_0^T   \EE \bigl[ |\bX_t^i-X^i_t|^2 + \vert \hat{\alpha}_{t}^i - \bar{\alpha}_{t}^{N,i} \vert^2 + W_2^2(\mu_t,\bar\mu^N_t) \bigr] dt \biggr)^{1/2}.
\end{split}
\end{equation*}
Now, by the Lipschitz property of the minimizer $\hat\alpha$ proven in Lemma \ref{le:minimizer} and by 
the Lipschitz property of $u$ in \eqref{eq:28:3:14}, we notice that 
$$
\vert \hat{\alpha}_{t}^i - \bar{\alpha}_{t}^{N,i} \vert
= \bigl|\hat\alpha\bigl(t,\bX_t^i,\mu_t,u(t,\bX_t^i)\bigr)-\hat\alpha\bigl(t,X^i_t,\mu_t,u(t,X^i_t)\bigr)\bigr|
\leq c |\bX_t^i-X^i_t|.
$$
Using \eqref{fo:JMW1} and \eqref{fo:JMW2}, this proves that, for any $1\leq i \leq N$, 
\begin{equation}
\label{fo:apNash1}
\bar{J}^{N,i}(\balpha^{1,N},\dots,\balpha^{N,N}) = J + O( N^{-1/(d+4)}).
\end{equation}
This suggests that, in order to prove inequality \eqref{fo:apNash} for $i=1$, we could restrict ourselves to compare $\bar{J}^{N,1}(\beta^1,\balpha^{2,N},\dots,\balpha^{N,N}) $ to $J$. Using the argument which led to  \eqref{fo:U1est}, \eqref{eq:8:5:5} and \eqref{fo:Uest}, together with
the definitions of $U^j$ and $X^j$ for $j=1,\cdots,N$, we get, for any $t \in [0,T]$:
\begin{equation*}
\begin{split}
&\EE\bigg[\sup_{0 \leq s \leq t} \vert U_{t}^1 - X_{t}^1 \vert^2 \bigg]
\leq \frac{c}{N}\int_0^t \sum_{j=1}^N \EE \bigg[\sup_{0\le r\le s} \vert U_{r}^j  - X_{r}^j \vert^2\bigg] ds +
c \EE \int_{0}^T \vert \beta_{t}^1 - \balpha_{t}^{N,1} \vert^2 dt,
\\
&\EE\bigg[\sup_{0 \leq s \leq t} \vert U_{t}^i - X_{t}^i \vert^2 \bigg]
\leq \frac{c}{N}\int_0^t \sum_{j=1}^N \EE \bigg[\sup_{0\le r\le s} \vert U_{r}^j  - X_{r}^j \vert^2\bigg] ds, \quad 2 \leq i \leq N.
\end{split}
\end{equation*}
Therefore, using Gronwall's inequality, we get:
\begin{equation}
\label{fo:U-Xest}
\frac{1}{N} \sum_{j=1}^N \EE \bigg[\sup_{0\le t\le T}  \vert U_{t}^j  - X_{t}^j \vert^2\bigg]
\leq \frac{c}{N} \EE \int_{0}^T \vert \beta_{t}^1 - \balpha_{t}^{N,1} \vert^2 dt,
\end{equation}
so that 
\begin{equation}
\label{fo:Ui-Xiest}
\sup_{0 \leq t \leq T}{\mathbb E} \bigl[ \vert U_{t}^i - X_{t}^i \vert^2 \bigr]
\leq \frac{c}{N} \EE \int_{0}^T \vert \beta_{t}^{1} - \balpha_{t}^{N,1} \vert^2 dt, \quad  2 \leq i \leq N.
\end{equation}
Putting together \eqref{eq:9:6:1},
\eqref{fo:JMW1} and \eqref{fo:Ui-Xiest}, we see that, for any $A>0$, there exists a constant $c_{A}$ depending on $A$ such that 
\begin{equation}
\label{fo:JMW3}
\EE \int_{0}^T \vert \beta_{t}^1 \vert^2 dt \leq A\quad\Longrightarrow\quad \max_{2\le i\le N}\sup_{0 \leq t \leq T}{\mathbb E} \bigl[ \vert U_{t}^i - \bX_{t}^i \vert^2 \bigr]
\leq c_{A} N^{-2/(d+4)}.
\end{equation}
Let us fix $A>0$ (to be determined later) and assume that $\EE \int_{0}^T \vert \beta_{t}^1 \vert^2 dt \leq A$. Using 
\eqref{fo:JMW3} we see that 
\begin{equation}
\label{fo:JMW4}
\frac1{N-1}\sum_{j=2}^N
\EE\bigl[|U^j_t-\bX^j_t|^2\bigr]\leq c_{A} N^{-2/(d+4)},
\end{equation}
for a constant $c_A$ depending upon $A$, and whose value can change from line to line. Now by the triangle inequality for the Wasserstein distance:
\begin{equation}
\label{eq:8:5:1}
\begin{split}
&\EE\bigl[W_{2}^2(\bar{\nu}^N_{t},\mu_{t}) \bigr]
 \le c\bigg\{ \EE\bigg[W_{2}^2\bigg(\frac1{N}\sum_{j=1}^N\delta_{U^j_t},\frac1{N-1}\sum_{j=2}^N\delta_{U^j_t}\bigg) \bigg]
\\ 
&\hspace{30pt}+\frac1{N-1}\sum_{j=2}^N
\EE\bigl[|U^j_t-\bX^j_t|^2\bigr] +\EE\bigg[W_{2}^2\bigg(\frac1{N-1}\sum_{j=2}^N\delta_{\bX^j_t},\mu_{t}\bigg) \bigg] \biggr\}.
\end{split}
\end{equation}
Noticing that
\begin{equation*}
\EE\bigg[W_{2}^2\bigg(\frac1{N}\sum_{j=1}^N\delta_{U^j_t},\frac1{N-1}\sum_{j=2}^N\delta_{U^j_t}\bigg) \bigg]
\leq
\frac1{N(N-1)}\sum_{j=2}^N\EE\bigl[|U^1_t-U^j_t|^2\bigr],
\end{equation*}
which is $O(N^{-1})$ because of \eqref{fo:U1est} and \eqref{fo:Uest}. Plugging this inequality into
\eqref{eq:8:5:1}, and using \eqref{fo:JMW4} to control the second term and Lemma \ref{le:Horowitz} to estimate the third term therein, we conclude that
\begin{equation}
\label{fo:W2est4nu}
{\mathbb E} \bigl[ W_{2}^2(\bar{\nu}^N_{t},\mu_{t}) \bigr] \leq c_{A} N^{-2/(d+4)}.
\end{equation}

\vskip 2pt
For the final step of the proof we define $(\bar{U}^1_{t})_{0 \leq t \leq T}$ as the solution of the SDE
$$
d \bar{U}^1_{t} = b(t,\bar{U}_{t}^1,\mu_{t},\beta_{t}^1) dt + \sigma dW_{t}^1, \quad 0 \leq t \leq T, \; \bar{U}^1_0=x,
$$
so that, from the definition \eqref{eq:28:3:10} of $U^1$ we get:
$$
U^1_t- \bar{U}^1_{t} = \int_0^t[b_0(s,\mu_s)-b_0(s,\bar\nu^N_s)]ds+\int_0^tb_1(s)[U^1_s- \bar{U}^1_s ]ds.
$$
Using the Lipschitz property of $b_0$, \eqref{fo:W2est4nu}  and the boundedness of $b_1$ and applying Gronwall's inequality, we get
\begin{equation}
\label{fo:U-Ubarest}
\sup_{0 \leq t \leq T}{\mathbb E} \bigl[ \vert U_{t}^1  - \bar{U}_{t}^1 \vert^2 \bigr] \leq c_{A} N^{-2/(d+4)},
\end{equation}
so that, going over the computation leading to \eqref{fo:apNash1} once more and using \eqref{fo:W2est4nu},
\eqref{fo:U1est}, \eqref{eq:8:5:5}
and
 \eqref{fo:Uest}:
$$
\bar{J}^{N,1}(\beta^1,\balpha^{N,2},\dots,\balpha^{N,N}) \geq J(\beta^1) - c_{A} {N}^{-1/(d+4)},
$$
where $J(\beta^1)$ stands for the \emph{mean-field cost} of $\beta^1$:
\begin{equation}
J(\beta^1) = \EE \biggl[ g(\bar{U}_{T}^1,\mu_{T}) + \int_{0}^T f\bigl(t,\bar{U}_{t}^1,\mu_{t},\beta_{t}^1\bigr) dt \biggr].
\end{equation}
Since $J \leq J(\beta^1)$ (notice that, even though $\beta^1$ is adapted to a larger filtration than the filtration of $W^1$, the stochastic maximum principle still applies as pointed out in Remark \ref{rem:SMP0}), we get in the end
\begin{equation}
\label{fo:apNash2}
\bar{J}^{N,1}(\beta^1,\balpha^{N,2},\dots,\balpha^{N,N}) \geq J - c_{A} N^{-1/(d+4)},
\end{equation}
and from \eqref{fo:apNash1} and \eqref{fo:apNash2}, we easily derive the desired inequality \eqref{fo:apNash}. Actually, the combination of \eqref{fo:apNash1} and \eqref{fo:apNash2} shows that $(\balpha^{N,1},\dots,\balpha^{N,N})$ is an $\epsilon$-Nash equilibrium for $N$ large enough, with a precise quantification (though not optimal) of the relationship between $N$ and $\epsilon$. But for the proof to be complete in full generality, we need to explain how we choose $A$, and discuss what happens when $\EE\int_0^T|\beta^1_t|^2 dt>A$.

Using the convexity in $x$ of $g$ around $x=0$ and the convexity of $f$ in $(x,\alpha)$ around $x=0$ and $\alpha=0$, see 
\eqref{fo:lambdaconvexity}, we get:
\begin{equation*}
\begin{split}
&\bar{J}^{N,1}(\beta^1,\balpha^{N,2},\dots,\balpha^{N,N}) \\
&\hskip 15pt \geq  \EE \biggl[ g(0,\bar{\nu}^N_{T}) + \int_{0}^T f(t,0,\bar{\nu}^N_{t},0) dt \biggr]
+ \lambda {\mathbb E} \int_{0}^T \vert \beta_{t}^1 \vert^2 dt
\\
&\hskip 25pt + \EE \biggl[ \langle U_{T}^1 , \partial_{x} g(0,\bar{\nu}_{T}^N) \rangle
+ \int_{0}^T \bigl( \langle U_{t}^1, \partial_{x} f(t,0,\bar{\nu}_{t}^N,0) \rangle +
\langle \beta_{t}^1, \partial_{\alpha} f(t,0,\bar{\nu}_{t}^N,0) \rangle \bigr) dt \biggr].
\end{split}
\end{equation*}
The local-Lipschitz assumption with respect to the Wasserstein distance and the definition of the latter imply the existence of a constant $c>0$ such that for any $t \in [0,T]$, 
\begin{equation*}
\EE \bigl[ \vert f(t,0,\bar{\nu}^N_{t},0) - f(t,0,\delta_{0},0) \vert \bigr] \leq c \EE \bigl[ 1 + 
M_{2}^2(\bar{\nu}^N_{t}) \bigr]
= c
\biggl[ 1 + 
\biggl(\frac{1}{N}  \sum_{i=1}^N \EE \bigl[\vert U_{t}^i \vert^2 \bigr] \biggr) \biggr].
\end{equation*}
with a similar inequality for $g$.  From this, we deduce
\begin{equation*}
\begin{split}
&\bar{J}^{N,1}(\beta^1,\balpha^{N,2},\dots,\balpha^{N,N})  \geq   g(0,\delta_{0}) + \int_{0}^T f(t,0,\delta_{0},0) dt 
\\
&\hspace{30pt}+ \EE \biggl[ \langle U_{T}^1 , \partial_{x} g(0,\bar{\nu}_{T}^N) \rangle
+ \int_{0}^T \bigl( \langle U_{t}^1, \partial_{x} f(t,0,\bar{\nu}_{t}^N,0) \rangle 
+ \langle \beta_{t}^1, \partial_{\alpha} f(t,0,\bar{\nu}_{t}^N,0) \rangle\bigr) dt \biggr]
\\
&\hspace{30pt}+ \lambda {\mathbb E} \int_{0}^T \vert \beta_{t}^1 \vert^2 dt - c
\biggl[ 1 + 
\biggl(\frac{1}{N}  \sum_{i=1}^N \sup_{0 \leq t \leq T} \EE \bigl[\vert U_{t}^i \vert^2 \bigr] \biggr) \biggr].
\end{split}
\end{equation*}
By (A.5), we know that $\partial_{x} g$, $\partial_{x} f$ and $\partial_{\alpha} f$ are at most of linear growth in the measure parameter (for the $L^2$-norm), so that, for any $\delta >0$, there exists a constant $c_{\delta}$ such that 
\begin{equation}
\label{eq:29:2:102}
\begin{split}
\bar{J}^{N,1}(\beta^1,\balpha^{N,2},\dots,\balpha^{N,N}) 
&\geq  g(0,\delta_{0}) + \int_{0}^T f(t,0,\delta_{0},0) dt 
+ \frac{\lambda}{2} {\mathbb E} \int_{0}^T \vert \beta_{t}^1 \vert^2 dt
\\
&\hspace{-5pt} - \delta \sup_{0 \leq t \leq T} \EE \bigl[\vert U_{t}^1 \vert^2 \bigr]
- c_{\delta} \biggl(  1 + \frac{1}{N} \sum_{i=1}^N \sup_{0 \leq t \leq T} \EE \bigl[ \vert U_{t}^i \vert^2 \bigr] \biggr).
\end{split}
\end{equation}
Estimates \eqref{fo:U1est} and \eqref{eq:8:5:5} show that one can choose $\delta$ small enough in \eqref{eq:29:2:102} and $c$ so that 
$$
\bar{J}^{N,1}(\beta^1,\balpha^{N,2},\dots,\balpha^{N,N}) 
\geq 
- c+ \bigl( \frac{\lambda}{4} - \frac{c}{N} \bigr) 
{\mathbb E} \int_{0}^T \vert \beta_{t}^1 \vert^2 dt.
$$
This proves that there exists an integer $N_{0}$ such that, for any integer $N \geq N_{0}$ and constant $\bar A>0$, one can choose $A>0$ such that 
\begin{equation}
{\mathbb E} \int_{0}^T \vert \beta_{t}^1 \vert^2 dt \geq A
\quad\Longrightarrow\quad
\bar{J}^{N,1}(\beta^1,\balpha^{N,2},\dots,\balpha^{N,N})  \geq J+\bar A,
\end{equation}
which provides us with the appropriate tool to choose $A$ and avoid having to consider $(\beta^1_t)_{0\le t\le T}$ whose expected square integral is too large.
\end{proof}

\vskip 4pt
A simple inspection of the last part of the above proof shows that a stronger result actually holds when 
${\mathbb E} \int_{0}^T \vert \beta_{t}^1 \vert^2 dt \leq A$. Indeed, 
the estimates \eqref{fo:U1est}, \eqref{fo:JMW3}
 and \eqref{fo:W2est4nu}
can be used as in \eqref{fo:apNash1} to deduce (up to a modification of $c_{A}$)
\begin{equation}
\label{eq:28:3:30}
\bar{J}^{N,i}(\beta^1,\balpha^{N,2},\dots,\balpha^{N,N}) \geq J - c_{A} N^{-1/(d+4)}, \quad 2 \leq i \leq N.
\end{equation}

\begin{corollary}
Under assumptions (A.1--7), not only does 
$$\bigl(\balpha_{t}^{N,i} = \hat{\alpha}(t,X_{t}^i,\mu_{t},u(t,X_{t}^i)))_{1 \leq i \leq N}\bigr)_{0 \leq t \leq T}$$  form an approximate Nash equilibrium of the $N$-player game (\ref{eq:28:3:10}--\ref{eq:28:3:11}) but:

$(i)$ there exists an integer $N_{0}$ such that, for any $N \geq N_{0}$ and $\bar A>0$, there exists a constant $A>0$ such that, for any player $i\in\{1,\cdots,N\}$ and any admissible strategy $\beta^i=(\beta^i_t)_{0\le t\le T}$, 
 \begin{equation}
 \label{fo:aNash1}
\EE \int_{0}^T \vert \beta_{t}^i \vert^2 dt \geq A
\quad\Longrightarrow\quad \bar{J}^{N,i}(\balpha^{1,N},\dots,\balpha^{i-1,N},\beta^i,\balpha^{i+1,N},\dots,\balpha^{N,N})\geq J + \bar A.
\end{equation}

$(ii)$ Moreover, for any $A>0$, there exists a sequence of positive real numbers $(\epsilon_{N})_{N \geq 1}$ converging toward 
$0$, such that for any admissible strategy $\beta^1=(\beta^1_t)_{0\le t\le T}$ for the first player
\begin{equation}
 \label{fo:aNash2}
 \EE \int_{0}^T \vert \beta_{t}^1 \vert^2 dt \leq A
\quad\Longrightarrow\quad
\min_{1\le i\le N}\bar{J}^{N,i}(\beta^1,\balpha^{2,N},\dots,\balpha^{N,N}) \geq J - \varepsilon_{N}.
\end{equation}
\end{corollary}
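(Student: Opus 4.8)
The assertion that the strategies $(\balpha^{N,i})_{1\le i\le N}$ form an approximate Nash equilibrium is precisely the content of Theorem \ref{th:apNash}, so the plan is only to reorganize the a priori estimates already established in its proof and to exploit the permutation invariance of the coefficients. By this symmetry it suffices to analyze a deviation of player $1$; the corresponding statements for an arbitrary player $i$ then follow verbatim after relabelling.

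For part (i), I would start from the coercivity bound \eqref{eq:29:2:102}: combining it with the control \eqref{fo:U1est} of $\sup_t\EE[|U_t^1|^2]$ and with \eqref{eq:8:5:5} to bound the averaged moments of the remaining players, one recovers, for a constant $c$ depending only on the data,
$$
\bar{J}^{N,1}(\beta^1,\balpha^{N,2},\dots,\balpha^{N,N}) \geq -c + \Bigl(\frac{\lambda}{4}-\frac{c}{N}\Bigr)\,\EE\int_0^T|\beta_t^1|^2\,dt.
$$
I would then fix $N_0$ so large that $\lambda/4-c/N\ge \lambda/8$ for every $N\ge N_0$, so that the slope is bounded away from $0$ uniformly in $N$. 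Given $\bar A>0$, setting $A=8\lambda^{-1}(J+\bar A+c)$ makes the right-hand side exceed $J+\bar A$ as soon as $\EE\int_0^T|\beta_t^1|^2\,dt\ge A$; crucially this $A$ depends only on $\bar A$ and not on $N\ge N_0$. Relabelling via symmetry yields \eqref{fo:aNash1} for every player $i$.

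For part (ii), I would fix $A>0$ and a deviation $\beta^1$ with $\EE\int_0^T|\beta_t^1|^2\,dt\le A$. The case $i=1$ is exactly \eqref{fo:apNash2}, giving $\bar{J}^{N,1}(\beta^1,\balpha^{N,2},\dots,\balpha^{N,N})\ge J-c_AN^{-1/(d+4)}$, while the cases $2\le i\le N$ are \eqref{eq:28:3:30}, obtained by feeding the energy bound \eqref{fo:JMW3} and the Wasserstein estimate \eqref{fo:W2est4nu} into the computation that led to \eqref{fo:apNash1}. Since both families of inequalities share the same lower bound $J-c_AN^{-1/(d+4)}$, taking the minimum over $1\le i\le N$ and setting $\epsilon_N=c_AN^{-1/(d+4)}$ delivers \eqref{fo:aNash2}.

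The estimates themselves being already in place, the only genuinely delicate point is the bookkeeping of constants and quantifiers. In (i) one must verify that $A$ can be chosen independently of $N$, which is exactly why the slope $\lambda/4-c/N$ must be bounded below uniformly for $N\ge N_0$; in (ii) one must check that the constant $c_A$ in \eqref{eq:28:3:30} and \eqref{fo:apNash2} depends only on the energy bound $A$ and not on the particular strategy $\beta^1$, a fact guaranteed by the way $A$ enters as the uniform control energy in \eqref{fo:JMW3}. Finally, the symmetry reduction must be handled differently in the two parts: in (i) the relevant quantity is the deviator's own cost, which is permutation-invariant, whereas in (ii) the minimum ranges over both the deviator $i=1$ and the non-deviators $i\ge 2$, which is precisely why both \eqref{fo:apNash2} and \eqref{eq:28:3:30} are needed.
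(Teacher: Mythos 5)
Your proposal is correct and follows essentially the same route as the paper, which itself only sketches the corollary by pointing back to the last part of the proof of Theorem \ref{th:apNash}: part $(i)$ is read off the coercivity bound \eqref{eq:29:2:102} combined with \eqref{fo:U1est} and \eqref{eq:8:5:5}, and part $(ii)$ combines \eqref{fo:apNash2} for the deviator with \eqref{eq:28:3:30} for the non-deviators. Your explicit choice of $N_0$ and $A$ (uniform in $N\ge N_0$) and your remark on which estimates make $c_A$ depend only on the energy bound are accurate refinements of what the paper leaves implicit.
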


\section{\textbf{Appendix: Proof of Lemma \ref{lem:17:3:1}}}
We focus on the approximation of the running cost $f$ (the case of the terminal cost $g$ is similar) and we ignore the dependence of $f$ upon $t$ to simplify the notation. For any $n \geq 1$, we define $f_{n}$ as the truncated Legendre transform:
\begin{equation}
\label{eq:25:11:1}
f_{n}(x,\mu,\alpha) 
= \sup_{\vert y \vert \leq n} 
 \inf_{z \in \RR^d}
\bigl[ \langle y,x-z \rangle + f(z,\mu,\alpha)
\bigr],
\end{equation}
for $(x,\alpha) \in \RR^d \times \RR^k$ and $\mu \in {\mathcal P}_{2}(\RR^d)$. 
By standard properties of the Legendre transform of convex functions,
\begin{equation}
\label{eq:5:5:8}
f_{n}(x,\mu,\alpha) 
\leq \sup_{y \in \RR^d} 
 \inf_{z \in \RR^d}
\bigl[ \langle y,x-z \rangle + f(z,\mu,\alpha)
\bigr] = f(x,\mu,\alpha).
\end{equation}
Moreover, by strict convexity of $f$ in $x$, 
\begin{equation}
\label{eq:5:5:6}
\begin{split}
f_{n}(x,\mu,\alpha) &\geq \inf_{z \in \RR^d}
\bigl[ f(z,\mu,\alpha)
\bigr] 
\geq \inf_{z \in \RR^d}
\bigl[ \gamma \vert z \vert^2 + \langle \partial_{x} f(0,\mu,\alpha),z \rangle
\bigr] + f(0,\mu,\alpha)
\\
&\geq - \frac{1}{4 \gamma} \vert \partial_{x} f(0,\mu,\alpha) \vert^2 + f(0,\mu,\alpha),
\end{split}
\end{equation}
so that $f_{n}$ has finite real values. Clearly, it is also $n$-Lipschitz continuous in $x$. 
\vspace{4pt}

\textit{First Step.} 
We first check that the sequence $(f_{n})_{n \geq 1}$ converges towards $f$, uniformly on bounded subsets 
of $\RR^d \times {\mathcal P}_{2}(\RR^d) \times \RR^k$. 
So for any given $R>0$, we restrict ourselves to $\vert x \vert \leq R$ and $\vert \alpha \vert \leq R$, and
$\mu \in  {\mathcal P}_{2}(\RR^d)$, such that $M_{2}(\mu) \leq R$. 
By (A.5), there exists a constant $c>0$, independent of $R$, such that 
\begin{equation}
\label{eq:24:3:1}
\sup_{z \in \RR^d} 
\bigl[ \langle y,z \rangle  - f(z,\mu,\alpha)
\bigr] 
\geq \sup_{z \in \RR^d} 
\bigl[ \langle y,z \rangle 
 - c \vert z \vert^2 \bigr] - c ( 1 + R^2)
= \frac{\vert y \vert^2}{4 c} - c ( 1 +R^2).
\end{equation}
Therefore,
\begin{equation}
\label{eq:24:3:2}
\inf_{z \in \RR^d}
\bigl[\langle y,x-z \rangle  + f(z,\mu,\alpha)
\bigr] 
\leq R \vert y \vert -  \frac{\vert y \vert^2}{4c} + c ( 1 + R^2).
\end{equation}
By \eqref{eq:5:5:6} and (A.5), $f_{n}(t,x,\mu,\alpha) \geq - c(1+R^2)$, $c$ depending possibly on $\gamma$, 
so that optimization in the variable $y$ can be done over points $y^{\star}$ satisfying
\begin{equation}
\label{eq:5:5:7}
- c (1 + R^2) \leq R \vert y^{\star} \vert -  \frac{\vert y^{\star} \vert^2}{4c} + c ( 1 + R^2), 
\quad \textrm{that is} \quad
\vert y^{\star} \vert \leq c ( 1 + R),
\end{equation}
In particular, for $n$ large enough (depending on $R$), 
$$
f_{n}(x,\mu,\alpha) = \sup_{y \in \RR^d}  \inf_{z \in \RR^d}
\bigl[ \langle y,x-z \rangle   + f(z,\mu,\alpha)
\bigr]
= f(x,\mu,\alpha).
$$
So on bounded subsets of $\RR^d \times {\mathcal P}_{2}(\RR^d) \times \RR^k$,  
$f_{n}$ and $f$ coincide for $n$ large enough. In particular, for $n$ large enough, $f_{n}(0,\delta_{0},0)$, $\partial_{x}
f_{n}(0,\delta_{0},0)$ and $\partial_{\alpha} f_{n}(0,\delta_{0},0)$ exist, coincide with 
$f(0,\delta_{0},0)$, $\partial_{x}
f(0,\delta_{0},0)$ and $\partial_{\alpha} f(0,\delta_{0},0)$ respectively, and are bounded by 
$c_{L}$ as in (A.5). 
Moreover, still for $\vert x \vert \leq R$, $\vert \alpha \vert \leq R$ and $M_{2}(\mu) \leq R$, 
we see from \eqref{eq:5:5:8} and \eqref{eq:5:5:7} that optimization in $z$ can be reduced to 
$z^{\star}$ satisfying
$$
\langle y^{\star},x-z^{\star} \rangle + f(z^{\star},\mu,\alpha) \leq f(x,\mu,\alpha) \leq c(1+R^2),
$$
the second inequality following from (A.5). 
By strict convexity of $f$ in $x$, we obtain
$$
- c (1+R) \vert z^{\star} \vert + 
\gamma \vert z^{\star} \vert^2 + \langle \partial_{x} f(0,\mu,\alpha),z^{\star} \rangle + f(0,\mu,\alpha) \leq  c( 1+ R^2),
$$
so that, by (A.5), $\gamma \vert z^{\star} \vert^2 - c (1+R) \vert z^{\star} \vert 
\leq  c( 1+ R^2)$,
that is 
\begin{equation}
\label{eq:5:5:10}
\vert z^{\star} \vert \leq c (1 +R).
\end{equation}

\textit{Second Step.}
We now investigate the convexity property of $f_{n}(\cdot,\mu,\cdot)$, for  a given $\mu \in {\mathcal P}_{2}(\RR^d)$. For any $h \in \RR$, 
$x,e,y,z_{1},z_{2} \in \RR^d$ and $\alpha,\beta \in \RR^k$, with 
$\vert y \vert \leq n$ and
$\vert e \vert, \vert \beta \vert \leq 1$,  we deduce from the convexity of $f(\cdot,\mu,\cdot)$:
\begin{equation*}
\begin{split}
&2\inf_{z \in \RR^d} 
\bigl[ \langle y,x- z \rangle  + f(z,\mu,\alpha)
\bigr]
\\
&\leq  \bigg\langle y,(x+he -z_{1})+(x-he - z_{2}) \bigg\rangle  
 + 2 f\biggl(\frac{z_{1}+z_{2}}{2},\mu,\frac{(\alpha+h \beta)+(\alpha-h \beta)}{2} \biggr)
\\
&\leq   \langle y,x + h e -z_{1} \rangle  + f(z_{1},\mu,\alpha + h \beta) 
+   \langle y,x - h e -z_{2} \rangle  
 + f(z_{2},\mu,\alpha - h \beta)  - 2\lambda h^2. 
\end{split}
\end{equation*}
Taking infimum with respect to $z_{1},z_{2}$ and supremum with respect to $y$, 
we obtain
\begin{equation}
\label{eq:24:3:3}
f_{n}(x,\mu,\alpha )
 \leq \frac{1}{2} f_{n}(x+he,\mu,\alpha + h \beta) + 
\frac{1}{2} f_{n}(x- h e,\mu,\alpha - h \beta) - \lambda  h^2. 
\end{equation}
In particular, the function $\RR^d \times \RR^k \ni (x,\alpha) \hookrightarrow 
f_{n}(x,\mu,\alpha) - \lambda \vert \alpha \vert^2$ is convex. We prove later on that it is also continuously 
differentiable so that \eqref{fo:lambdaconvexity}
 holds.
\vskip 4pt

In a similar way, we can investigate the semi-concavity property of $f_{n}(\cdot,\mu,\cdot)$. For any $h \in \RR$, $x,e,y_{1},y_{2} \in \RR^d$, $\alpha,\beta \in \RR^k$, with $\vert y_{1}\vert,\vert y_{2} \vert \leq n$ and $\vert e \vert, \vert \beta \vert  \leq 1$, 
\begin{equation*}
\begin{split}
&\inf_{z \in \RR^d}
\bigl[ \langle y_{1},x+h e-z \rangle  + f(z,\mu,\alpha+h \beta)
\bigr]  +  \inf_{z \in \RR^d}
\bigl[ \langle y_{2},x-h e-z \rangle  + f(z,\mu,\alpha - h \beta)
\bigr]
\\
&= \inf_{z \in \RR^d}
\bigl[ \langle y_{1},x-z \rangle  
 + f(z+h e,\mu,\alpha+h \beta)
\bigr] +  \inf_{z \in \RR^d}
\bigl[ \langle y_{2},x-z \rangle  + f(z - he,\mu,\alpha - h \beta)
\bigr].
\end{split}
\end{equation*}
By expanding $f(\cdot,\mu,\cdot)$ up to the second order, we see that
\begin{equation*}
\begin{split}
&\inf_{z \in \RR^d}
\bigl[ \langle y_{1},x+h e-z \rangle   + f(z,\mu,\alpha+h \beta)
\bigr]
 + \inf_{z \in \RR^d}
\bigl[ \langle y_{2},x- h e-z \rangle  + f(z,\mu,\alpha - h \beta)
\bigr]
\\
&\leq 
\inf_{z \in \RR^d}
\bigl[ \langle y_{1}+y_{2},x-z \rangle + 2 f(z,\mu,\alpha) \bigr]
+ c \vert h \vert^2,
\end{split}
\end{equation*}
for some constant $c$.  Taking the supremum over $y_{1},y_{2}$, we deduce that 
$$
f_{n}(x+he,\mu,\alpha+h \beta) + 
f_{n}(x-he,\mu,\alpha-h \beta)
- 2 f_{n}(x,\mu,\alpha)
\leq  c \vert h \vert^2.
$$
So for any $\mu \in {\mathcal P}_{2}(\RR^d)$, the function 
$\RR^d \times \RR^k \ni (x,\alpha)
\hookrightarrow f_{n}(x,\mu,\alpha) - c [ \vert x \vert^2 + \vert \alpha \vert^2]$
is concave and $f_{n}(\cdot,\mu,\cdot)$ 
is ${\mathcal C}^{1,1}$, the Lipschitz constant of the derivatives being uniform in $n \geq 1$ and $\mu \in 
{\mathcal P}_{2}(\RR^d)$. Moreover, by definition, the function $f_{n}(\cdot,\mu,\cdot)$ is $n$-Lipschitz continuous in the variable $x$, that is $\partial_{x} f_{n}$ is bounded, as required. 
\vskip 4pt

\textit{Third Step.} We now investigate (A.5). Given $\delta >0$, $R>0$ and $n \geq 1$, we consider $x \in \RR^d$, $\alpha \in \RR^k$, $\mu,\mu' \in {\mathcal P}_{2}(\RR^d)$ such that 
\begin{equation}
\label{eq:13:4:5}
\max\bigl(\vert x \vert,\vert \alpha \vert,M_{2}(\mu),M_{2}(\mu') \bigr) \leq R, \ \ W_{2}(\mu,\mu') \leq \delta.
\end{equation}
By (A.5) and \eqref{eq:5:5:10}, we can find a constant $c'$ (possibly depending on $\gamma$) such that
\begin{equation}
\label{eq:5:5:15}
\begin{split}
f_{n}(x,\mu',\alpha) &= \sup_{\vert y \vert \leq n}
\inf_{\vert z \vert \leq c (1+R)}
\bigl[ \langle y,x-z \rangle + f(z,\mu',\alpha) \bigr]
\\
&\leq 
\sup_{\vert y \vert \leq n}
\inf_{z \leq c (1+R)}
\bigl[ \langle y,x-z \rangle + f(z,\mu,\alpha) + c_{L}(1+R+\vert z \vert) \delta  \bigr]
\\
&= \sup_{\vert y \vert \leq n}
\inf_{z \in \RR^d}
\bigl[ \langle y,x-z \rangle + f(z,\mu,\alpha)  \bigr]
+ c'(1+R) \delta.
\end{split}
\end{equation}
This proves local Lipschitz-continuity in the measure argument as in (A.5). 

In order to prove local Lipschitz-continuity in the variables $x$ and $\alpha$, we use the ${\mathcal C}^{1,1}$-property. Indeed, for $x$, $\mu$ and 
$\alpha$ as in \eqref{eq:13:4:5}, we know that
\begin{equation}
\label{eq:5:5:12}
\bigl\vert \partial_{x} f_n(x,\mu,\alpha) \bigr\vert + 
\bigl\vert \partial_{\alpha} f_n(x,\mu,\alpha) \bigr\vert
\leq \bigl\vert \partial_{x} f_n(0,\mu,0) \bigr\vert + 
\bigl\vert \partial_{\alpha} f_n(0,\mu,0) \bigr\vert + cR.
\end{equation}
By \eqref{eq:5:5:8}, for any integer $p \geq 1$, there exists an integer $n_{p}$, such that, for any $n \geq n_{p}$, 
$f_{n}(0,\mu,0)$ and $f(0,\mu,0)$ coincide for  
$M_2(\mu) \leq p$. In particular, for $n \geq n_{p}$, 
\begin{equation}
\label{eq:5:5:16}
\bigl\vert \partial_{x} f_n(0,\mu,0) \bigr\vert + 
\bigl\vert \partial_{\alpha} f_n(0,\mu,\alpha) \bigr\vert
\leq  c \bigl(1+ M_{2}(\mu) \bigr) \quad \textrm{whenever} \quad M_{2}(\mu) \leq p,
\end{equation}
so that \eqref{eq:5:5:12} implies (A.5) whenever $n \geq n_{p}$ and $M_{2}(\mu) \leq p$. 
We get rid of these restrictions by modifying the definition of $f_{n}$. Given a probability measure $\mu \in {\mathcal P}_{2}(\RR^d)$ and an integer $p \geq 1$, we define $\Phi_{p}(\mu)$ as the push-forward of $\mu$ by the mapping
$\RR^d \ni x \hookrightarrow \bigl[\max\bigl(M_{2}(\mu),p \bigr) \bigr]^{-1} p x$
so that $\Phi_{p}(\mu) \in {\mathcal P}_{2}(\RR^d)$ and 
$
M_{2}( \Phi_{p}(\mu)) \leq \min( p,M_{2}(\mu))$. 
Indeed, if $X$ has $\mu$ as distribution, then the r.v.
$
X_{p} = p X/\max(M_{2}(\mu),p)$
has $\Phi_{p}(\mu)$ as distribution.
It is easy to check that $\Phi_{p}$ is Lipschitz continuous for the $2$-Wasserstein distance, uniformly in $n \geq 1$. We then consider the approximating sequence
$$
\hat{f}_{p} : \RR^d \times {\mathcal P}_{2}(\RR^d) \times \RR^k \ni (x,\mu,\alpha) \hookrightarrow 
f_{n_{p}}\bigl(x,\Phi_{p}(\mu),\alpha), \quad p \geq 1,
$$ 
instead of $(f_{n})_{n \geq 1}$ itself. Clearly, on any bounded subset, $\hat{f}_{p}$ still coincides with $f$ for $p$ large enough. Moreover, the conclusion of the second step is preserved. In particular, the conclusion of the second step together with \eqref{eq:5:5:15}, \eqref{eq:5:5:12}
 and \eqref{eq:5:5:16} say that 
(A.5) holds (for a possible new choice of $c_{L}$). From now on, we get rid of the symbol ``hat''
in $(\hat{f}_{p})_{p \geq 1}$
 and keep the notation 
$(f_{n})_{n \geq 1}$ for $(\hat{f}_{p})_{p \geq 1}$.
\vskip 4pt

\textit{Fourth Step.}
It only remains to check that $f_{n}$ satisfies the bound (A.6) and the sign condition (A.7). 
Since $\vert \partial_{\alpha} f(x,\mu,0) \vert \leq c_{L}$, the Lipschitz property of
$\partial_{\alpha} f$ implies that there exists a constant $c \geq 0$ such that 
$\vert \partial_{\alpha} f(x,\mu,\alpha) \vert \leq c$ for all $(x,\mu,\alpha) \in \RR^d \times {\mathcal P}_{2}(\RR^d) \times \RR^k$ with $\vert \alpha \vert \leq 1$. In particular, for any $n \geq 1$, it is plain to see that 
$
f_{n}(x,\mu,\alpha) \leq f_{n}(x,\mu,0) + c \vert \alpha \vert,
$
for any $(x,\mu,\alpha) \in  \RR^d \times {\mathcal P}_{2}(\RR^d) \times \RR^k$ with $\vert \alpha \vert \leq 1$, so that $\vert \partial_{\alpha} f_{n}(x,\mu,0) \vert \leq c$. This proves (A.6). 

Finally, we can modify the definition of $f_{n}$ once more to satisfy (A.7). Indeed, for any $R>0$, there exists an integer $n_{R}$, such that, for any $n \geq n_{R}$, 
$f_{n}(x,\mu,\alpha)$ and $f(x,\mu,\alpha)$ coincide for $(x,\mu,\alpha) \in \RR^d \times {\mathcal P}_{2}(\RR^d) \times \RR^k$ with $\vert x \vert, \vert \alpha \vert,M_{2}(\mu) \leq R$ so that 
$
\langle x,\partial_{x} f_{n}(0,\delta_{x},0) \rangle \geq -c_{L} (1+ \vert x \vert),
$
for  $\vert x \vert \leq R$ and $n \geq n_{R}$.
Next we choose a smooth function $\psi: \RR^d \hookrightarrow \RR^d$, satisfying $\vert \psi(x) \vert \leq 1$ for any $x \in \RR^d$, $\psi(x)=x$ for $\vert x \vert \leq 1/2$ and $\psi(x)= x/\vert x \vert$ for $\vert x \vert \geq 1$, and we set
$\hat{f}_{p}(x,\mu,\alpha) = f_{n_{p}}\bigl(x,\Psi_{p}(\mu),\alpha \bigr)$
for any integer $p \geq 1$ and $(x,\mu,\alpha) \in \RR^d \times {\mathcal P}_{2}(\RR^d) \times \RR^k$
where $\Psi_{p}(\mu)$ is the push-forward of $\mu$ by the mapping 
$
\RR^d \ni x \hookrightarrow x - \overline \mu  + p \psi( p^{-1} \langle \mu \rangle)$. Recall that
$\overline \mu$ stands for the mean of $\mu$.  
In other words, if $X$ has distribution $\mu$, then
$
\hat{X}_{p} = X - \EE(X) + p \psi ( p^{-1} \EE(X) )
$
has  distribution $\Psi_{p}(\mu)$.

$\Psi_{p}$ is Lipschitz continuous with respect to $W_{2}$, uniformly in $p \geq 1$. Moreover, for any $R>0$ and $p \geq 2R$, $M_{2}(\mu) \leq R$ implies 
$\vert \int_{\RR^d} x' d\mu(x') \vert
\leq R$ so that 
$p^{-1} \vert \int_{\RR^d} x' d\mu(x') \vert \leq 1/2$, that is $\Psi_{p}(\mu) = \mu$ and, for $\vert x \vert, \vert \alpha \vert \leq R$,   
$
\hat{f}_{p}(x,\mu,\alpha) = f_{n_{p}}(x,\mu,\alpha) = f(x,\mu,\alpha)$. 
Therefore, the sequence $(\hat{f}_{p})_{p \geq 1}$ is an approximating sequence for $f$ which satisfies the same regularity properties as $(f_{n})_{n \geq 1}$.
In addition,
$$
\langle x,\partial_{x} \hat{f}_{p}(0,\delta_{x},0) \rangle 
= \langle x,\partial_{x} f_{n_{p}}(0,\delta_{p \psi(p^{-1}x)},0) \rangle
= \langle x,\partial_{x} f(0,\delta_{p \psi(p^{-1}x)},0) \rangle 
$$
for $x \in \RR^d$. Finally we choose $\psi(x)=[\rho(\vert x \vert)/\vert x \vert]x$ (with $\psi(0)=0$), where $\rho$ is a smooth non-decreasing function from $[0,+\infty)$ into $[0,1]$ such that $\rho(x)=x$ on $[0,1/2]$ and $\rho(x)=1$ on $[1,+\infty)$. If $x \not =0$, then the above right-hand side is equal to 
\begin{equation*}
\begin{split}
\langle x,\partial_{x} f(0,\delta_{p \psi(p^{-1}x)},0) \rangle 
&= 
\frac{\vert p^{-1} x \vert}{ \rho(\vert p^{-1} x \vert) }
\langle p \psi(p^{-1}x),\partial_{x} f(0,\delta_{p \psi(p^{-1}x)},0) \rangle 
\\
&\geq - c_{L}
\frac{\vert p^{-1} x \vert}{ \rho(\vert p^{-1} x \vert) } \bigl( 1 + \vert 
p \psi(p^{-1} x) \vert \bigr). 
\end{split}
\end{equation*}
For $\vert x \vert \leq p/2$, we have $\rho(p^{-1} \vert x \vert) = \vert p^{-1}x \vert$, so that the right-hand side coincides with 
$-c_{L}(1+ \vert x \vert)$. For $\vert x \vert \geq p/2$, we have $\rho(p^{-1} \vert x \vert) \geq 1/2$ so that  
$$
- 
\frac{\vert p^{-1} x \vert}{ \rho(\vert p^{-1} x \vert) } \bigl( 1 + \vert 
p \psi(p^{-1} x) \vert \bigr) \geq -2  p^{-1} \vert x \vert \bigl( 1 + \vert 
p \psi(p^{-1} x) \vert \bigr)
\geq -2 p^{-1} \vert x \vert \bigl( 1 + p \bigr) \geq - 4 \vert x \vert.
$$
This proves that (A.7) holds with a new constant. 
$\Box$ 

\bibliographystyle{plain}

\begin{thebibliography}{10}

\bibitem{Bardi}
M.~Bardi.
\newblock Explicit solutions of some linear quadratic mean field games.
\newblock Technical report, Padova University, May 2011.

\bibitem{Bensoussanetal}
A.~Bensoussan, K.C.J. Sung, S.C.P. Yam, and S.P. Yung.
\newblock Linear quadratic mean field games.
\newblock Technical report, 2011.

\bibitem{BuckdahnDjehicheLi1}
R.~Buckdahn, B.~Djehiche, and J.~Li.
\newblock Mean-field backward stochastic differential equations and related
  partial differential equations.
\newblock {\em Stochastic Processes and their Applications}, 119:3133--3154,
  2007.

\bibitem{BuckdahnDjehicheLiPeng}
R.~Buckdahn, B.~Djehiche, J.~Li, and S.~Peng.
\newblock Mean-field backward stochastic differential equations: A limit
  approach.
\newblock {\em The Annals of Probability}, 37:1524--1565, 2009.

\bibitem{Cardaliaguet}
P.~Cardaliaguet.
\newblock Notes on mean field games.
\newblock Technical report, 2010.

\bibitem{CarmonaDelarue3}
R.~Carmona and F.~Delarue.
\newblock Optimal control of {M}c{K}ean-{V}lasov stochastic dynamics.
\newblock Technical report, 2012.

\bibitem{CarmonaDelarueLachapelle}
R.~Carmona, F.~Delarue, and A.~Lachapelle.
\newblock Control of {M}c{K}ean-{V}lasov versus {M}ean {F}ield {G}ames.
\newblock {\em Mathematical Financial Economics}, 2012.

\bibitem{Delarue02}
F.~Delarue.
\newblock On the existence and uniqueness of solutions to {FBSDE}s in a
  non-degenerate case.
\newblock {\em Stochastic Processes and Applications}, 99:209--286, 2002.

\bibitem{GiraudGueantLasryLions}
P.N. Giraud, O.~Gu{\'{e}}ant, J.M. Lasry, and P.L. Lions.
\newblock A mean field game model of oil production in presence of alternative
  energy producers.
\newblock Technical report, to appear.

\bibitem{GueantLasryLions.pplnm}
O.~Gu{\'{e}}ant, J.M. Lasry, and P.L. Lions.
\newblock Mean field games and applications.
\newblock In R.~Carmona et~al., editor, {\em Paris Princeton Lectures in
  Mathematical Finance IV}, volume 2003 of {\em Lecture Notes in Mathematics}.
  Springer Verlag, 2010.

\bibitem{FDD09}
O.~Gu{\'{e}}ant, J.M. Lasry, and P.L. Lions.
\newblock Mean field games and oil production.
\newblock {\em Finance and Sustainable Development : Seminar's lectures.}, To
  appear in 2009.

\bibitem{HuPeng}
Y.~Hu and S.~Peng.
\newblock Solution of forward-backward stochastic differential equations.
\newblock {\em Probab. Theory Related Fields}, 103:273--283, 1995.

\bibitem{HuangCainesMalhame1}
M.~Huang, P.E. Caines, and R.P. Malham{\'{e}}.
\newblock Individual and mass behavior in large population stochastic wireless
  power control problems: centralized and {N}ash equilibrium solutions.
\newblock pages 98 -- 103, 2003.

\bibitem{HuangCainesMalhame2}
M.~Huang, P.E. Caines, and R.P. Malham{\'{e}}.
\newblock Large population stochastic dynamic games: closed-loop mckean-vlasov
  systems and the {N}ash certainty equivalence principle.
\newblock {\em Communications in Information and Systems}, 6:221--252, 2006.

\bibitem{HuangCainesMalhame4}
M.~Huang, P.E. Caines, and R.P. Malham{\'{e}}.
\newblock Large population cost coupled {LQG} problems with nonuniform agents:
  individual mass behavior and decentralized {$\epsilon$}-{N}ash equilibria.
\newblock {\em IEEE Transactions on Automatic Control}, 52:1560--1571, 2007.

\bibitem{JourdainMeleardWoyczynski}
B.~Jourdain, S.~Meleard, and W.~Woyczynski.
\newblock Nonlinear {S}{D}{E}s driven by {L}\'evy processes and related
  {P}{D}{E}s.
\newblock {\em ALEA, Latin American Journal of Probability}, 4:1--29, 2008.

\bibitem{Lachapelle}
A.~Lachapelle.
\newblock Human crowds and groups interactions: a mean field games approach.
\newblock Technical report, CEREMADE, University Paris Dauphine, 2010.

\bibitem{LachapelleLasry}
A.~Lachapelle and J.M. Lasry.
\newblock A mean field games model for the choice of insulation technology of
  households.
\newblock Technical report, CEREMADE, University Paris Dauphine, 2010.

\bibitem{MFG1}
J.M. Lasry and P.L. Lions.
\newblock Jeux {\`{a}} champ moyen {I}. {L}e cas stationnaire.
\newblock {\em Comptes Rendus de l'Acad{\'{e}}mie des Sciences de Paris, ser.
  A}, 343(9), 2006.

\bibitem{MFG2}
J.M. Lasry and P.L. Lions.
\newblock Jeux {\`{a}} champ moyen {I}{I}. {H}orizon fini et contr{\^{o}}le
  optimal.
\newblock {\em Comptes Rendus de l'Acad{\'{e}}mie des Sciences de Paris, ser.
  A}, 343(10), 2006.

\bibitem{MFG3}
J.M. Lasry and P.L. Lions.
\newblock Mean field games.
\newblock {\em Japanese Journal of Mathematics}, 2(1), Mar. 2007.

\bibitem{MaYong.book}
J.~Ma and J.~Yong.
\newblock {\em Forward-Backward Stochastic Differential Equations and their
  Applications}, volume 1702 of {\em Lecture Notes in Mathematics}.
\newblock Springer Verlag, 2007.

\bibitem{PengWu}
S.~Peng and Z.~Wu.
\newblock Fully coupled forward-backward stochastic differential equations and
  applications to optimal control.
\newblock {\em SIAM Journal on Control and Optimization}, 37:825--843, 1999.

\bibitem{Pham.book}
H.~Pham.
\newblock {\em Continuous-time Stochastic Control and Optimization with
  Financial Applications}.
\newblock Stochastic Modelling and Applied Probability. Springer Verlag, 2009.

\bibitem{RachevRuschendorf}
S.T. Rachev and L.~Ruschendorf.
\newblock {\em Mass Transportation Problems I: Theory}.
\newblock Springer Verlag, 1998.

\bibitem{Sznitman}
A.S. Sznitman.
\newblock Topics in propagation of chaos.
\newblock In {\em D. L. Burkholder et al. , Ecole de Probabilit\'es de Saint
  Flour, XIX-1989}, volume 1464 of {\em Lecture Notes in Mathematics}, pages
  165--251, 1989.

\bibitem{YongZhou}
J.~Yong and X.~Zhou.
\newblock {\em Stochastic Controls: Hamiltonian Systems and HJB Equations}.
\newblock Springer Verlag, 1999.

\end{thebibliography}

\end{document}